\newtheorem{thm}{Theorem}[section]
\newtheorem{lem}[thm]{Lemma}
\newtheorem{prop}[thm]{Proposition}
\newtheorem{cor}[thm]{Corollary}
\newtheorem{open}[thm]{Open Problem}
\theoremstyle{definition}
\newtheorem{dfn}[thm]{Definition}
\newtheorem{exm}[thm]{Example}
\newtheorem{exms}[thm]{Examples}
\theoremstyle{remark}
\newtheorem{rem}[thm]{Remark}
\newcommand{\exmsymbol}{\hfill$\circ$}
\newcommand{\cset}{\mathds{C}}
\newcommand{\nset}{\mathds{N}}
\newcommand{\qset}{\mathds{Q}}
\newcommand{\rset}{\mathds{R}}
\newcommand{\zset}{\mathds{Z}}
\newcommand{\diff}{\mathrm{d}}
\newcommand{\pos}{\mathrm{Pos}}
\newcommand{\supp}{\mathrm{supp}\,}
\newcommand{\one}{\mathds{1}}
\newcommand{\cC}{\mathcal{C}}
\newcommand{\cH}{\mathcal{H}}
\newcommand{\cS}{\mathcal{S}}
\newcommand{\cV}{\mathcal{V}}
\newcommand{\fD}{\mathfrak{D}}
\newcommand{\fd}{\mathfrak{d}}
\newcommand{\fg}{\mathfrak{g}}
\author{Philipp J.\ di Dio}
\address{Department of Mathematics and Statistics, University of Konstanz, Universit\"atsstra{\ss}e 10, D-78464 Konstanz, Germany}
\address{Zukunftskolleg, Universtity of Konstanz, Universit\"atsstra{\ss}e 10, D-78464 Konstanz, Germany}
\address{philipp.didio@uni-konstanz.de}
\author{Konrad Schm\"udgen}
\address{Mathematical Institute, University of Leipzig, Augustusplatz 10, D-04109 Leipzig, Germany}
\address{schmuedgen@uni-leipzig.de}
\journal{arXiv}
\title{$K$-Positivity Preservers and their Generators}
\begin{document}

\begin{abstract}
We study $K$-positivity preservers with given closed $K\subseteq\rset^n$, i.e., linear maps $T:\rset[x_1,\dots,x_n]\to\rset[x_1,\dots,x_n]$ such that $T\pos(K)\subseteq\pos(K)$ holds, and their generators $A:\rset[x_1,\dots,x_n]\to\rset[x_1,\dots,x_n]$, i.e., $e^{tA}\pos(K)\subseteq\pos(K)$ holds for all $t\geq 0$.
We characterize these maps $T$ for any closed $K\subseteq\rset^n$ in \Cref{thm:KposDescription}.
We characterize the maps $A$ in \Cref{thm:PosGeneratorsGeneral} for $K=\rset^n$ and give partial results for general $K$.
In \Cref{thm:sigma} and \ref{thm:1} we give maps $A$ such that $e^{tA}$ is a positivity preserver for all $t\geq \tau$ for some $\tau>0$ but not for $t\in (0,\tau)$, i.e., we have an eventually positive semi-group.
\end{abstract}

\begin{keyword}
positivity preserver\sep generator\sep eventually positive\sep moments
\MSC[2020] Primary 44A60, 47A57, 15A04; Secondary 12D15, 45P05, 47B38.
\end{keyword}

\maketitle



\section{Introduction}

The non-negative polynomials
\[\pos(K) := \{f\in\rset[x_1,\dots,x_n] \,|\, f\geq 0\ \text{on}\ K\}\]
on closed $K\subseteq\rset^n$, $n\in \nset$, are extensively studied in the literature.
Various types of Positivstellens\"{a}tze of real algebraic geometry provide powerful descriptions of non-negative or positive polynomials on semi-algebraic sets $K$ \cite{marshallPosPoly}.

In contrast, linear maps
\begin{equation}\label{eq:linT}
T:\rset[x_1,\dots,x_n]\to\rset[x_1,\dots,x_n],
\end{equation}
which satisfy 
\begin{equation}\label{Tpos}
T\pos(K)\subseteq\pos(K)
\end{equation}
are studied much less and mainly for $K=\rset^n$, see e.g.\ \cite{guterman08,netzer10,borcea11}.
A linear map $T$ satisfying (\ref{Tpos}) is called a \emph{$K$-positivity preserver}.

The linear operators $T$ can be nicely described as differential operators of infinite order, i.e., for each $\alpha\in\nset_0^n$ there exists a unique polynomial $q_\alpha\in\rset[x_1,\dots,x_n]$ such that
\begin{equation}\label{diffT}
T = \sum_{\alpha\in\nset_0^n} q_\alpha\cdot\partial^\alpha.
\end{equation}
This result is folklore.
A proof can e.g.\ be found in \cite[Lemma\ 2.3]{netzer10}.
The representation (\ref{diffT}) is called the \emph{canonical representation} of $T$.

The main objects of this paper are $K$-positivity preservers with closed $K\subseteq\rset^n$ and generators of semi-groups of $K$-positivity preservers.
That is, we study the following problems:
\begin{enumerate}[I)]
\item When is an operator $T$ of the form (\ref{diffT}) a $K$-positivity preserver? 

\item For which linear map $A:\rset[x_1,\dots,x_n]\to\rset[x_1,\dots,x_n]$ is   $e^{tA}$  a $K$-positivity preserver for all $t\geq 0$?
\end{enumerate}

For differential operators $T$ with $K=\rset^n$ problem (I) has been solved in \cite{borcea11} and with constant coefficients $q_\alpha\in\rset$ problem (II) has been solved in \cite{didio24posPresConst}.
The present paper treats operators $T$ with polynomial coefficients $q_\alpha\in\rset[x_1,\dots,x_n]$ and general closed $K\subseteq\rset^n$.
This is technically much more involved.

The importance of the second problem is motivated by the following.
For any $f_0\in\rset[x_1,\dots,x_n]$ we have that $e^{tA}f_0$ is the unique solution of
\begin{equation}\label{eq:pde}
\partial_t f = A f \quad\text{with}\quad f(\,\cdot\,,0) = f_0.
\end{equation}
If $f_0\in\pos(K)$ for some closed subset $K\subseteq\rset^n$, it is natural to ask whether the solution $f$ of (\ref{eq:pde}) is also in $\pos(K)$ for all $t\geq 0$.
If we require this for arbitrary $f_0\in\rset[x_1,\dots,x_n]$ then it is equivalent to $e^{tA}\pos(K)\subseteq\pos(K)$ for all $t\geq 0$.
This is one reason for the interest in  operators $A$ which satisfy $e^{tA}\pos(K)\subseteq\pos(K)$  for all $t\geq 0$.

One easily sees that when optimizing a linear functional $L:\rset[x_1,\dots,x_n]\to\rset$ over some set $\cC\subseteq\rset[x_1,\dots,x_n]$ we can apply $e^{tA}$ and $e^{-tA^*}$ to get
\[L(p) = L(e^{-tA} e^{tA}p) = (e^{-tA^*} L)(e^{tA} p).\]
Hence, if $A$ and $t\geq 0$ are chosen such that $\tilde{\cC} := e^{tA}\cC$ consists only of sums of squares we have to optimize $\tilde{L} := e^{-tA^*}L$ over the subset $\tilde{\cC}$ of sums of squares:
\[\min_{p\in\cC} L(p) = \min_{p\in\tilde{\cC}} \tilde{L}(p).\]
Optimizing over (a subset of) sums of squares is computationally much more efficient.
In \cite{curtoHeat23} we observed that under the heat equation several non-negative polynomials which are not sums of squares, e.g.\ the Motzkin, the Robinson, and the Choi--Lam polynomial, become a sum of squares and in \cite[Thm.\ 3.20]{curtoHeat23} we found that every non-negative polynomial $p\in\rset[x_1,x_2,x_3]_{\leq 4}$ becomes a sum of squares after a time $\tau>0$ (independent on $p$).

For the study of the generator problem we shall define and use a non-commutative regular Fr\'echet Lie group.

\begin{dfn}
We define
\[\fD := \left\{ T = \sum_{\alpha\in\nset_0^n} q_\alpha\cdot\partial^\alpha \;\middle|\; q_\alpha\in\rset[x_1,\dots,x_n]_{\leq |\alpha|},\alpha\in\nset_0^n,\ \text{and}\ \ker T = \{0\}\right\}\]
and
\[\fd := \left\{ \sum_{\alpha\in\nset_0^n} q_\alpha\cdot\partial^\alpha \;\middle|\; q_\alpha\in\rset[x_1,\dots,x_n]_{\leq |\alpha|},\alpha\in\nset_0^n\right\}.\]
We define $\one$ by $q_0 = 1$ and $q_\alpha=0$ otherwise.
\end{dfn}

Clearly, the linear mappings $T\in\fD$ leave the vector spaces $\rset[x_1,\dots,x_n]_{\leq d}$ invariant for all $d\in\nset_0$.
This property is necessary such that $e^A$ is well-defined as an operator $\rset[x_1,\dots,x_n]\to\rset[x_1,\dots,x_n]$.

The paper is structured as follows.
To make the paper as self-contained as possible in \Cref{sec:prelim} we collect results about positivity preserver and regular Fr\'echet Lie groups which will be needed later.
In \Cref{sec:Dfrechet} we show that $(\fD,\,\cdot\,)$ is a regular Fr\'echet Lie group with Lie algebra $(\fd,\,\cdot\,,+)$.
\Cref{sec:KposPres} is devoted to the first of the above-mentioned problems, i.e., we fully characterize $K$-positivity preservers with polynomial coefficients.
$K$-positivity preservers with constant coefficients are also studied in more depth in \Cref{sec:KposPres}.
\Cref{sec:generators} is about the second problem and contains our results about the generators of $K$-positivity preserving semi-groups.
\Cref{thm:PosGeneratorsGeneral} gives the complete characterization of generators of positivity preserving semi-groups with polynomial coefficients for $K=\rset^n$.
Here infinitely divisible measures and the Levy-Khinchin formula play a crucial role.
In \Cref{sec:eventually} we give examples of eventually positive semi-groups.
In \Cref{sec:summary} we summarize our results and mention two open problems which emerged during our investigation.

We always work on $\rset[x_1,\dots,x_n]$, i.e., we always have $n\in\nset$ from now on.
We assume the set $K\subseteq\rset^n$ is always non-empty.

\section{Preliminaries}
\label{sec:prelim}

\subsection{Positivity Preserver}

\begin{dfn}
Let
\[T = \sum_{\alpha\in\nset_0^n} q_\alpha\cdot\partial^\alpha.\]
For every $y\in\rset^n$ we define the linear map
\[T_y:\rset[x_1,\dots,x_n]\to\rset[x_1,\dots,x_n]\quad \text{by}\quad T_y := \sum_{\alpha\in\nset_0^n} q_\alpha(y)\cdot\partial^\alpha.\]
\end{dfn}

\begin{prop}[{\cite[Thm.\ 3.1]{borcea11}}]\label{thm:posPresChara}
Let
\[T = \sum_{\alpha\in\nset_0^n} q_\alpha\cdot\partial^\alpha\]
be with $q_\alpha\in\rset[x_1,\dots,x_n]$.
Then the following are equivalent:
\begin{enumerate}[(i)]
\item $T$ is a positivity preserver, i.e., $T\pos(\rset^n)\subseteq\pos(\rset^n)$.

\item $(\alpha!\cdot q_\alpha(y))_{\alpha\in\nset_0^n}$ is a moment sequence for every $y\in\rset^n$.
\end{enumerate}
If one of the equivalent conditions holds then for every $y\in\rset^n$ the map $T_y$ is given by
\[(T_y p)(x) := \int_{\rset^n} f(x+z)~\diff\mu_y(z)\]
for all $p\in\rset[x_1,\dots,x_n]$ where $\mu_y$ is a representing measure of $(\alpha!\cdot q_\alpha(y))_{\alpha\in\nset_0^n}$.
\end{prop}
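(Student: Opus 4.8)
The plan is to reduce the global condition (\ref{Tpos}) for $K=\rset^n$ to a pointwise moment condition, by combining translation invariance of $\pos(\rset^n)$ with Taylor's formula, and then to invoke Haviland's theorem.

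First I would fix $y\in\rset^n$ and rewrite the evaluation $(Tp)(y)$. Write $p_y:=p(\,\cdot\,+y)$; by Taylor's formula $p_y(x)=\sum_{\alpha\in\nset_0^n}\tfrac{(\partial^\alpha p)(y)}{\alpha!}\,x^\alpha$, so the coefficient of $x^\alpha$ in $p_y$ equals $(\partial^\alpha p)(y)/\alpha!$. Define the linear functional $L_y:\rset[x_1,\dots,x_n]\to\rset$ by $L_y(x^\alpha):=\alpha!\,q_\alpha(y)$; this is well-defined, and for fixed $p$ only finitely many $\partial^\alpha p$ are nonzero, so all sums occurring are finite. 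The key observation is then
\[
(Tp)(y)=\sum_{\alpha\in\nset_0^n} q_\alpha(y)\,(\partial^\alpha p)(y)=\sum_{\alpha\in\nset_0^n}\bigl(\alpha!\,q_\alpha(y)\bigr)\cdot\frac{(\partial^\alpha p)(y)}{\alpha!}=L_y(p_y).
\]

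Next I would use that the translation $p\mapsto p_y$ is a linear bijection of $\rset[x_1,\dots,x_n]$ onto itself with $p_y\in\pos(\rset^n)$ if and only if $p\in\pos(\rset^n)$. Hence $T\pos(\rset^n)\subseteq\pos(\rset^n)$ holds if and only if $L_y$ is non-negative on $\pos(\rset^n)$ for every $y\in\rset^n$. By Haviland's theorem (see e.g.\ \cite{marshallPosPoly}), $L_y$ is non-negative on $\pos(\rset^n)$ if and only if it admits a representing measure $\mu_y$ on $\rset^n$, i.e.\ if and only if $(\alpha!\,q_\alpha(y))_{\alpha\in\nset_0^n}=(L_y(x^\alpha))_{\alpha\in\nset_0^n}$ is a moment sequence. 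This yields the equivalence (i)$\Leftrightarrow$(ii). For the integral representation, assume (ii) holds and let $\mu_y$ represent $L_y$; integrating the finite Taylor expansion $p(x+z)=\sum_\alpha\tfrac{(\partial^\alpha p)(x)}{\alpha!}\,z^\alpha$ against $\mu_y$ and using $\int_{\rset^n} z^\alpha\,\diff\mu_y(z)=L_y(x^\alpha)=\alpha!\,q_\alpha(y)$ gives $\int_{\rset^n} p(x+z)\,\diff\mu_y(z)=\sum_\alpha q_\alpha(y)(\partial^\alpha p)(x)=(T_y p)(x)$.

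The only step I expect to require genuine care is the application of Haviland's theorem: one must check that $L_y$ is defined on the whole ring $\rset[x_1,\dots,x_n]$ — which it is, by the canonical representation (\ref{diffT}), since on each $\rset[x_1,\dots,x_n]_{\le d}$ only the coefficients $q_\alpha$ with $|\alpha|\le d$ act — and recall that Haviland's theorem produces a positive Borel measure supported in the closed set $\rset^n$. Everything else, namely the Taylor identity, the interchange of the two quantifiers ``for all $p\in\pos(\rset^n)$'' and ``for all $y\in\rset^n$'', and the interchange of the finite sum with the integral, is routine.
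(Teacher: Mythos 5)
Your proof is correct and takes essentially the same approach as the paper's proof of the more general \Cref{thm:KposDescription}: Taylor's formula to identify the coefficients $q_\alpha(y)$ with moments, and Haviland's theorem to produce the representing measure. The paper cites Borcea for this specific statement without reproving it, and your translation-invariance trick for $K=\rset^n$ plays the role that the shifted test monomials $(x-y)^\beta$ play for general closed $K$ in \Cref{lem:KposMomSeq}.
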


%
%
%

%
%
%

We see that all characterizations are based on the constant coefficient case.
We therefore introduce the following definitions, see also \cite{didio24posPresConst}.

\begin{dfn}\label{dfn:Ds}
Let $s = (s_\alpha)_{\alpha\in\nset_0^n}$ be a real sequence.
We define
\[D(s) := \sum_{\alpha\in\nset_0^n} \frac{s_\alpha}{\alpha!}\cdot\partial^\alpha.\]
\end{dfn}

\begin{lem}
Let $s = (s_\alpha)_{\alpha\in\nset_0^n}$ and $t = (t_\alpha)_{\alpha\in\nset_0^n}$ be two real sequences.
Then
\[D(s) D(t) = D(t) D(s) = D(u)\]
for a unique real sequence $u = (u_\alpha)_{\alpha\in\nset_0^n}$ with
\begin{equation}\label{eq:dfnConvolutionSequences}
u_\alpha = \sum_{\beta\preceq\alpha} \binom{\alpha}{\beta}\cdot s_\beta\cdot t_{\alpha-\beta}
\end{equation}
for all $\alpha\in\nset_0^n$.
\end{lem}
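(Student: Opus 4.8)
The plan is to compute the composition $D(s)D(t)$ directly from the definition and read off the coefficient sequence. Write
\[
D(s)D(t) = \left(\sum_{\beta\in\nset_0^n}\frac{s_\beta}{\beta!}\partial^\beta\right)\left(\sum_{\gamma\in\nset_0^n}\frac{t_\gamma}{\gamma!}\partial^\gamma\right) = \sum_{\beta,\gamma\in\nset_0^n}\frac{s_\beta t_\gamma}{\beta!\,\gamma!}\partial^{\beta+\gamma},
\]
where the key (and essentially only nontrivial) point is that constant-coefficient differential operators commute, so $\partial^\beta\circ\partial^\gamma = \partial^{\beta+\gamma}$ with no coefficients picked up; this also immediately gives $D(s)D(t)=D(t)D(s)$ by symmetry of the double sum in $(\beta,s)\leftrightarrow(\gamma,t)$. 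Collecting terms with $\alpha := \beta+\gamma$ fixed, i.e. substituting $\gamma = \alpha-\beta$ and summing over $\beta\preceq\alpha$, yields
\[
D(s)D(t) = \sum_{\alpha\in\nset_0^n}\left(\sum_{\beta\preceq\alpha}\frac{s_\beta\,t_{\alpha-\beta}}{\beta!\,(\alpha-\beta)!}\right)\partial^\alpha = \sum_{\alpha\in\nset_0^n}\frac{1}{\alpha!}\left(\sum_{\beta\preceq\alpha}\binom{\alpha}{\beta}s_\beta\,t_{\alpha-\beta}\right)\partial^\alpha,
\]
using $\binom{\alpha}{\beta} = \frac{\alpha!}{\beta!(\alpha-\beta)!}$ in the multi-index sense. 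Thus $D(s)D(t) = D(u)$ with $u_\alpha$ as in \eqref{eq:dfnConvolutionSequences}.

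There is a small well-definedness subtlety to address before this formal manipulation is legitimate: both $D(s)$ and $D(t)$ are differential operators of infinite order, so one must check that the composition makes sense and that rearranging the double sum is justified. This is handled by the standard observation (as in the canonical-representation discussion around \eqref{diffT}) that when $D(s)D(t)$ is applied to any fixed polynomial $p\in\rset[x_1,\dots,x_n]$, only finitely many $\partial^\gamma p$ are nonzero and, after applying $D(s)$, still only finitely many terms survive, so all sums are finite and reordering is unproblematic; hence $D(s)D(t)$ is again a well-defined linear operator on $\rset[x_1,\dots,x_n]$ of the form $\sum_\alpha \frac{u_\alpha}{\alpha!}\partial^\alpha$.

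Finally, uniqueness of the sequence $u$ follows from uniqueness of the canonical representation \eqref{diffT}: two operators of the form $D(\cdot)$ that agree as maps on $\rset[x_1,\dots,x_n]$ must have identical coefficients, because applying them to monomials $x^\alpha$ recovers each $u_\alpha$ individually. I expect no real obstacle here — the statement is essentially a bookkeeping identity (the Cauchy-type product / binomial convolution on multi-indexed sequences); the only thing that needs a sentence of care is the justification that the infinite-order operators compose termwise on polynomials.
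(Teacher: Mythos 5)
Your proof is correct and is the standard computation; the paper states this lemma without proof (treating it as a routine bookkeeping identity), and the argument you give — compose the formal sums, use $\partial^\beta\partial^\gamma=\partial^{\beta+\gamma}$, collect by $\alpha=\beta+\gamma$, and read off the binomial convolution — is exactly what the paper implicitly relies on. Your added remarks on well-definedness (finitely many terms survive when applied to a fixed polynomial) and on uniqueness via the canonical representation are both appropriate and correctly handled.
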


\begin{dfn}
Let $s = (s_\alpha)_{\alpha\in\nset_0^n}$ and $t = (t_\alpha)_{\alpha\in\nset_0^n}$ be two real sequences.
We define \emph{the convolution $s*t = (u_\alpha)_{\alpha\in\nset_0^n}$ of $s$ and $t$} by (\ref{eq:dfnConvolutionSequences}).
\end{dfn}

The following collects previous results, e.g.\ from \cite{didio24hadamardArXiv,didio24posPresConst}.

\begin{cor}\label{cor:momSequAddConvMult}
Let $s = (s_\alpha)_{\alpha\in\nset_0^n}$ and $t = (t_\alpha)_{\alpha\in\nset_0^n}$ be two moment sequences with representing measures $\mu$ and $\nu$.
Then the following hold:
\begin{enumerate}[(i)]
\item For all $a,b\geq 0$ we have that $as + bt$ is a moment sequence with representing measure $a\mu + b\nu$.

\item The convolution $s*t$ is a moment sequence with representing measure $\mu*\nu$.

\item The Hadamard product $s\odot t := (s_\alpha\cdot t_\alpha)_{\alpha\in\nset_0^n}$ is a moment sequence with representing measure $\mu\odot\nu$ defined by
\[(\mu\odot\nu)(A) := \int_{\rset^{2n}} \chi_A(x_1y_1,\dots,x_ny_n)~\diff\mu(x)~\diff\nu(y)\]
for all Borel sets $A\subseteq\rset^n$ and $\chi_A$ is the characteristic function of $A$.
\end{enumerate}
\end{cor}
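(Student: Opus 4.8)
The plan is to treat the three parts separately, in each case exhibiting the claimed measure and verifying that its moments are the asserted sequence; since a sequence is a moment sequence precisely when some positive Borel measure represents it, producing the measure is enough. For part~(i), linearity of the integral gives immediately
\[
\int_{\rset^n} x^\alpha \, \diff(a\mu+b\nu)(x) = a\int_{\rset^n} x^\alpha\,\diff\mu(x) + b\int_{\rset^n} x^\alpha\,\diff\nu(x) = a s_\alpha + b t_\alpha,
\]
and $a\mu+b\nu$ is a positive measure because $a,b\geq 0$; one should only remark that all these integrals are finite, i.e.\ that $a\mu+b\nu$ has finite moments of every order, which follows from the corresponding property of $\mu$ and $\nu$.

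For part~(ii), I would compute the moments of the convolution $\mu*\nu$ using Fubini's theorem and the multinomial expansion: for $\alpha\in\nset_0^n$,
\[
\int_{\rset^n} x^\alpha \, \diff(\mu*\nu)(x) = \int_{\rset^n}\int_{\rset^n} (y+z)^\alpha \, \diff\mu(y)\,\diff\nu(z) = \sum_{\beta\preceq\alpha}\binom{\alpha}{\beta}\left(\int y^\beta\,\diff\mu(y)\right)\left(\int z^{\alpha-\beta}\,\diff\nu(z)\right),
\]
which is exactly $u_\alpha$ as defined in~(\ref{eq:dfnConvolutionSequences}). Here the finiteness needed to apply Fubini follows since each $|y+z|^\alpha$ is dominated by a finite sum of products $|y|^\beta|z|^{\alpha-\beta}$ with $\beta\preceq\alpha$, and $\mu,\nu$ have finite moments of all orders. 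It is worth noting that this also re-proves the earlier lemma identifying $D(s)D(t)=D(u)$, since $D(\cdot)$ intertwines convolution of sequences with the operator product; but for the corollary only the measure-theoretic statement is needed.

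For part~(iii), I would verify directly from the definition of $\mu\odot\nu$ that its $\alpha$-th moment is $s_\alpha t_\alpha$. Taking a monomial $x^\alpha = x_1^{\alpha_1}\cdots x_n^{\alpha_n}$ and approximating it by simple functions (or invoking the standard transfer/change-of-variables argument that $\int g \, \diff(\mu\odot\nu) = \int\int g(x_1 y_1,\dots,x_n y_n)\,\diff\mu(x)\,\diff\nu(y)$ for integrable $g$), we get
\[
\int_{\rset^n} x^\alpha \, \diff(\mu\odot\nu)(x) = \int_{\rset^{2n}} (x_1 y_1)^{\alpha_1}\cdots(x_n y_n)^{\alpha_n}\,\diff\mu(x)\,\diff\nu(y) = \left(\int x^\alpha\,\diff\mu\right)\left(\int y^\alpha\,\diff\nu\right) = s_\alpha t_\alpha,
\]
again using Fubini, whose hypotheses hold because $\int |x^\alpha|\,\diff\mu$ and $\int|y^\alpha|\,\diff\nu$ are finite. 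One should also check that $\mu\odot\nu$ is genuinely a positive Borel measure: it is the pushforward of the product measure $\mu\otimes\nu$ on $\rset^{2n}$ under the continuous coordinatewise-product map $\rset^{2n}\to\rset^n$, so measurability and positivity are automatic. The main (rather mild) obstacle throughout is bookkeeping the integrability/finite-moment conditions so that Fubini and the pushforward arguments are legitimate; there is no conceptual difficulty, as each part reduces to an elementary manipulation of integrals once the right measure is written down.
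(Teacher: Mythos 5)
Your proofs of all three parts are correct, and the bookkeeping of integrability (dominating $|(y+z)^\alpha|$ by the multinomial sum of $|y^\beta||z^{\alpha-\beta}|$, and noting $\mu\odot\nu$ is a pushforward of the product measure) is exactly what makes the Fubini applications legitimate. However, the paper does not actually carry out any of these computations: its entire proof of the corollary consists of declaring (i) ``clear'' and citing (ii) to \cite[Cor.~3.7~(ii)]{didio24posPresConst} and (iii) to \cite[Thm.~2]{didio24hadamardArXiv}. So you have written a self-contained proof where the authors delegate to their earlier work. That is a different presentation rather than a different mathematical idea --- the arguments you give (multinomial expansion of $(y+z)^\alpha$ under the convolution, pushforward under the coordinatewise product map for the Hadamard product) are the standard ones and almost certainly coincide with what appears in the cited references. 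The advantage of your version is that a reader of this paper need not chase the references; the disadvantage is a small amount of redundancy with the authors' prior publications, which is presumably why they chose to cite instead.
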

\begin{proof}
(i) is clear,
(ii) is \cite[Cor.\ 3.7 (ii)]{didio24posPresConst},
and (iii) is \cite[Thm.\ 2]{didio24hadamardArXiv}.
\end{proof}

\subsection{Fr\'echet Spaces, $\cset[[x_1,\dots,x_n]]$, and $\cset[x_1,\dots,x_n]^\sharp[[\partial_1,\dots,\partial_n]]$}

A Fr\'echet space is a metrizable, complete, and locally convex topological vector space $\cV$.
The vector space $\cset[[x_1,\dots,x_n]]$ of formal power series is a Fr\'echet space with the family $\{p_d\}_{d\in\nset_0}$ of semi-norms $p_s(f) = \sup_{\alpha\in\nset_0^n:|\alpha|\leq d} |c_\alpha|$ of $f = \sum_{\alpha\in\nset_0^n} c_\alpha\cdot x^\alpha$, see e.g.\ \cite[pp.\ 91--92, Ex.\ III]{treves67}.
The convergence for this  Fr\'echet space is the coordinate-wise convergence.
Similarly, the space of all sequences indexed by a countable set is a Fr\'echet space.

\begin{dfn}\label{dfn:polySharpPartial}
We define
\[\cset[x_1,\dots,x_n]^\sharp[[\partial_1,\dots,\partial_n]] := \left\{ \sum_{\alpha\in\nset_0^n} p_\alpha\cdot \partial^\alpha \,\middle|\, p_\alpha\in\cset[x_1,\dots,x_n]_{\leq |\alpha|},\alpha\in\nset_0^n\right\}.\]
\end{dfn}

\begin{lem}\label{lem:polySharpPartialFrechetSpace}
The vector space $\cset[x_1,\dots,x_n]^\sharp[[\partial_1,\dots,\partial_n]]$ is a Fr\'echet space with the coordinate-wise convergence.
\end{lem}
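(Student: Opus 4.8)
\emph{Proof plan.} The plan is to recognize the space as a countable product of finite-dimensional spaces and then invoke the standard fact that such a product is Fréchet.

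First I would observe that an element $T=\sum_{\alpha\in\nset_0^n} p_\alpha\cdot\partial^\alpha$ of $\cset[x_1,\dots,x_n]^\sharp[[\partial_1,\dots,\partial_n]]$ amounts to nothing more than an unconstrained choice, for each $\alpha\in\nset_0^n$, of a polynomial $p_\alpha$ in the finite-dimensional vector space $\cset[x_1,\dots,x_n]_{\leq|\alpha|}$, with no coupling between distinct multi-indices $\alpha$. Hence the coefficient map
\[
\Phi:\ \sum_{\alpha\in\nset_0^n} p_\alpha\cdot\partial^\alpha\ \longmapsto\ (p_\alpha)_{\alpha\in\nset_0^n}
\]
is a linear bijection onto $\prod_{\alpha\in\nset_0^n}\cset[x_1,\dots,x_n]_{\leq|\alpha|}$, and, directly from the definition of coordinate-wise convergence, it is a homeomorphism once the product is equipped with the product topology (each factor, being finite-dimensional, carries a unique Hausdorff vector topology, namely coefficient-wise convergence).

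Next I would use that each factor $\cset[x_1,\dots,x_n]_{\leq|\alpha|}$ is a finite-dimensional normed, hence Fréchet, space, and that a countable product of Fréchet spaces is again Fréchet: it is locally convex and metrizable as a countable product of metrizable locally convex spaces (cf.\ \cite{treves67}), and it is complete because a product of complete spaces is complete. Transporting this along $\Phi$ gives the claim. For a more self-contained presentation one can instead write down the defining seminorms explicitly: for $T=\sum_\alpha p_\alpha\partial^\alpha$ with $p_\alpha=\sum_{|\beta|\leq|\alpha|}c_{\alpha,\beta}x^\beta$ set
\[
\|T\|_d:=\max\bigl\{\,|c_{\alpha,\beta}| : \alpha,\beta\in\nset_0^n,\ |\alpha|\leq d,\ |\beta|\leq|\alpha|\,\bigr\},\qquad d\in\nset_0,
\]
a countable point-separating family of seminorms inducing exactly coordinate-wise convergence, so the space is a metrizable locally convex topological vector space.

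The only part that is not purely formal is completeness, and this is where I expect the single point of care to lie. Given a Cauchy sequence $(T^{(k)})_k$, each coefficient $c_{\alpha,\beta}^{(k)}$ is Cauchy in $\cset$ and therefore converges; since for fixed $\alpha$ there are only finitely many relevant $\beta$, all of them satisfying $|\beta|\leq|\alpha|$, the limiting polynomial $p_\alpha$ again lies in $\cset[x_1,\dots,x_n]_{\leq|\alpha|}$, so the limit $T=\sum_\alpha p_\alpha\partial^\alpha$ is a genuine element of the space and $T^{(k)}\to T$ coordinate-wise. Equivalently, one just notes that each coordinate space $\cset[x_1,\dots,x_n]_{\leq|\alpha|}$ is complete because it is finite-dimensional, so the degree constraints $\deg p_\alpha\leq|\alpha|$ pass to the limit automatically. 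This closes the proof.
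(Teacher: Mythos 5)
Your argument is correct and is essentially the paper's: you identify the space, via the coefficient map, with a countable product of $\cset$'s (equivalently a sequence space over a countable index set) carrying the product topology, and then invoke the standard fact that such a space is Fréchet. The paper phrases this as an isomorphism onto the sequence space $\cS=\{(c_{\alpha,\beta})_{|\beta|\leq|\alpha|}\}$; your extra remarks about explicit seminorms and passing the degree constraints to the limit are just a more detailed unpacking of the same argument.
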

\begin{proof}
We have
\begin{multline*}
\cset[x_1,\dots,x_n]^\sharp[[\partial_1,\dots,\partial_n]] =\\ \left\{\sum_{\alpha\in\nset_0^n} \sum_{\beta\in\nset_0^n:|\beta|\leq |\alpha|} c_{\alpha,\beta}\cdot x^\beta\cdot\partial^\alpha \,\middle|\, c_{\alpha,\beta}\in\cset\ \text{for all}\ \alpha,\beta\in\nset_0^n\ \text{with}\ |\beta|\leq |\alpha|\right\},
\end{multline*}
i.e., $\cset[x_1,\dots,x_n]^\sharp[[\partial_1,\dots,\partial_n]]$ is as a vector space isomorphic to the vector space of sequences
\[\cS := \big\{(c_{\alpha,\beta})_{\alpha,\beta\in\nset_0^n: |\beta|\leq |\alpha|} \,\big|\, c_{\alpha,\beta}\in\cset\ \text{for all}\ \alpha,\beta\in\nset_0^n\ \text{with}\ |\beta|\leq |\alpha|\big\}.\]
But $\cS$ as the vector space of sequences with the coordinate-wise convergence is a Fr\'echet space and hence so is $\cset[x_1,\dots,x_n]^\sharp[[\partial_1,\dots,\partial_n]]$.
\end{proof}

\begin{rem}
By the same argument as in the proof of \Cref{lem:polySharpPartialFrechetSpace} we have that
\[\cset[[x_1,\dots,x_n]][[\partial_1,\dots,\partial_n]] := \left\{ \sum_{\alpha\in\nset_0^n} p_\alpha\cdot \partial^\alpha \,\middle|\, p_\alpha\in\cset[[x_1,\dots,x_n]],\alpha\in\nset_0^n \right\}\]
is a Fr\'echet space with the coordinate-wise convergence.
\exmsymbol
\end{rem}

For more about Fr\'echet spaces see e.g.\ \cite{treves67,schaef99}.

\subsection{Regular Fr\'echet Lie Groups and their Lie Algebra}

We give here only the definition of a regular Fr\'echet Lie group according to \cite{omori97}.

\begin{dfn}[see e.g.\ {\cite[p.\ 63, Dfn.\ 1.1]{omori97}}]\label{dfn:frechetLieGroup}
We call $(G,\,\cdot\,)$ a \emph{regular Fr\'echet Lie group} if the following conditions are fulfilled:
\begin{enumerate}[(i)]
\item $G$ is an infinite dimensional smooth Fr\'echet manifold.

\item $(G,\,\cdot\,)$ is a group.

\item The map $G\times G\to G$, $(A,B)\mapsto A\cdot B^{-1}$ is smooth.

\item The \emph{Fr\'echet Lie algebra} $\fg$ of $G$ is isomorphic to the tangent space $T_e G$ of $G$ at the unit element $e\in G$.

\item $\exp:\fg\to G$ is a smooth mapping such that
\[\left.\frac{\diff}{\diff t} \exp(t u)\right|_{t=0} = u\]
holds for all $u\in\fg$.

\item The space $C^1(G,\fg)$ of $C^1$-curves in $G$ coincides with the set of all $C^1$-curves in $G$ under the Fr\'echet topology.
\end{enumerate}
\end{dfn}

For more on infinite dimensional manifolds, differential calculus, Lie groups, and Lie algebras see e.g.\ \cite{omori97,schmed23}.

\subsection{Generators of Positivity Preservers with Constant Coefficients}

The following is a result from \cite{didio24posPresConst} for positivity preservers with constant coefficients and their generators.

\begin{dfn}
We define
\[\fd_0 := \left\{ \sum_{\alpha\in\nset_0^n} c_\alpha\cdot\partial^\alpha \,\middle|\, c_\alpha\in\rset\ \text{for all}\ \alpha\in\nset_0^n\ \text{and}\ c_0=0\right\}\]
and the set of all generators of positivity preservers with constant coefficients
\[\fd_{0,+} := \left\{A\in\delta_0 \,\middle|\, e^{tA}\ \text{positivity preserver for all}\ t\geq 0\right\}.\]
\end{dfn}

\begin{prop}[{\cite[Main Thm.\ 4.11]{didio24posPresConst}}]\label{thm:PosGeneratorsConstCoeffs}
The following are equivalent:
\begin{enumerate}[(i)]
\item $\displaystyle A = \sum_{\alpha\in\nset_0^n\setminus\{0\}} \frac{a_\alpha}{\alpha!}\cdot\partial^\alpha \in\fd_{0,+}$.

\item There exists a symmetric matrix $\Sigma = (\sigma_{i,j})_{i,j=1}^n\in\rset^n$ with $\Sigma\succeq 0$, a vector $b = (b_1,\dots,b_n)^T\in\rset^n$, and a measure $\nu$ on $\rset^n$ with
\[\int_{\|x\|_2\geq 1} |x_i|~\diff\nu(x)<\infty \qquad\text{and}\qquad \int_{\rset^n} |x^\alpha|~\diff\nu(x)<\infty\]
for all $i=1,\dots,n$ and $\alpha\in\nset_0^n$ with $|\alpha|\geq 2$ such that
\begin{align*}
a_{e_i} &= b_i + \int_{\|x\|_2\geq 1} x_i~\diff\nu(x) && \text{for all}\ i=1,\dots,n,\\
a_{e_i+e_j} &= \sigma_{i,j} + \int_{\rset^n} x^{e_i + e_j}~\diff\nu(x) && \text{for all}\ i,j=1,\dots,n,
\intertext{and}
a_\alpha &= \int_{\rset^n} x^\alpha~\diff\nu(x) &&\text{for all}\ \alpha\in\nset_0^n\ \text{with}\ |\alpha|\geq 3.
\end{align*}
\end{enumerate}
\end{prop}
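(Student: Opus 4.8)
The statement to prove (Proposition~\ref{thm:PosGeneratorsConstCoeffs}, restated here from \cite{didio24posPresConst}) characterizes the generators $A\in\fd_{0,+}$ of positivity-preserving semi-groups with constant coefficients via a L\'evy--Khinchin-type decomposition. The plan is to translate the operator-theoretic condition ``$e^{tA}$ is a positivity preserver for all $t\ge 0$'' into a statement about moment sequences, and then to recognize the resulting one-parameter family of moment sequences as the moment sequences of an infinitely divisible probability (or sub-probability) measure, at which point the L\'evy--Khinchin formula supplies exactly the data $(\Sigma,b,\nu)$.

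\textbf{Step 1: From semi-groups of operators to semi-groups of sequences.} Write $A = D(a)$ with $a = (a_\alpha)_{\alpha}$, $a_0 = 0$, in the notation of \Cref{dfn:Ds}. Because $D(s)D(t) = D(s*t)$, the exponential $e^{tA}$ has constant coefficients, namely $e^{tA} = D(s(t))$ where $s(t) = (s_\alpha(t))_\alpha$ is the unique sequence solving $\tfrac{d}{dt}s(t) = a * s(t)$, $s(0) = \delta_0$ (the sequence with $s_0=1$, $s_\alpha=0$ otherwise). This is a well-posed linear ODE in each fixed degree $|\alpha|\le d$ since $D(a)$ preserves $\rset[x_1,\dots,x_n]_{\le d}$, so $s(t)$ exists for all $t\in\rset$ and $s(t+t') = s(t)*s(t')$. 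By \Cref{thm:posPresChara} (the $y$-independent, constant-coefficient case), $e^{tA} = D(s(t))$ is a positivity preserver if and only if $(s_\alpha(t))_\alpha$ is a moment sequence. Hence condition (i) is equivalent to: $s(t)$ is a moment sequence for every $t\ge 0$.

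\textbf{Step 2: Identifying the convolution semi-group.} The family $\{s(t)\}_{t\ge 0}$ is a weakly continuous (indeed real-analytic in each coordinate) convolution semi-group of moment sequences with $s(0)=\delta_0$ and $s_0(t)\equiv 1$ (from $\dot s_0 = a_0 s_0 = 0$). By \Cref{cor:momSequAddConvMult}(ii), convolution of moment sequences corresponds to convolution of the representing measures, so the representing measures $\mu_t$ (which one must first argue can be taken to be probability measures, using $s_0(t)=1$ and, say, a determinacy/compactness argument on the relevant finite-dimensional moment data, or by invoking that each $s(t)$ with $s(t)=s(t/k)^{*k}$ forces infinite divisibility) form a convolution semi-group of probability measures with $\mu_0 = \delta_0$. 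Thus $\mu_1$ is an infinitely divisible probability measure on $\rset^n$, all of whose moments are finite, and $A$ is recovered as $A = \tfrac{d}{dt}\big|_{t=0} D(s(t))$, i.e.\ $a_\alpha = \tfrac{d}{dt}\big|_{t=0} s_\alpha(t) = \tfrac{d}{dt}\big|_{t=0}\int x^\alpha\,d\mu_t(x)$.

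\textbf{Step 3: Applying L\'evy--Khinchin and reading off the coefficients.} For an infinitely divisible $\mu_1$ with all moments finite, the L\'evy--Khinchin formula gives a Gaussian part $\Sigma\succeq 0$, a drift $b$, and a L\'evy measure $\nu$; the moment finiteness of $\mu_1$ forces $\int_{\|x\|\ge1}|x_i|\,d\nu<\infty$ and $\int|x^\alpha|\,d\nu<\infty$ for $|\alpha|\ge 2$. Differentiating the cumulant structure at $t=0$ (equivalently, computing $\tfrac{d}{dt}\big|_{0}$ of the moments of $\mu_t$, whose log-characteristic function is $t$ times that of $\mu_1$) yields that the ``first cumulant derivative'' in direction $e_i$ is $b_i + \int_{\|x\|\ge1}x_i\,d\nu$, the second-order ones are $\sigma_{i,j} + \int x^{e_i+e_j}\,d\nu$, and all higher ones are $\int x^\alpha\,d\nu$ — exactly the formulas in (ii). Conversely, given data as in (ii), one builds the infinitely divisible measure $\mu_1$ with that L\'evy triple, sets $\mu_t = \mu_1^{*t}$, lets $s(t)$ be its moment sequence, checks $D(s(t))$ solves the ODE with generator $D(a)$ for the prescribed $a$, and concludes $e^{tA}=D(s(t))$ is a positivity preserver for all $t\ge0$ by \Cref{thm:posPresChara} together with \Cref{cor:momSequAddConvMult}.

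\textbf{Main obstacle.} The delicate point is Step 2: passing from a convolution semi-group of \emph{moment sequences} to a genuine convolution semi-group of \emph{measures}, and then to infinite divisibility with a controlled L\'evy measure. One must handle possible non-uniqueness of representing measures and ensure the $\mu_t$ can be chosen consistently (so that $\mu_t * \mu_{t'} = \mu_{t+t'}$ as measures, not merely at the level of moments) and that $\mu_t$ is a probability measure rather than a signed or sub-probability measure; the finite-moment hypothesis, weak continuity in $t$, and the semi-group identity $s(t) = s(t/k)^{*k}$ are the levers for this, but making the selection rigorous — and then justifying term-by-term differentiation of the L\'evy--Khinchin exponent to extract the coefficient formulas — is where the real work lies. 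The converse direction is comparatively routine once \Cref{cor:momSequAddConvMult}(ii) and \Cref{thm:posPresChara} are in hand.
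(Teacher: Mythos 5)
Note first that the paper does not prove this proposition; it is quoted from \cite{didio24posPresConst}. The closest thing in this paper to a proof is the argument for the analogous \Cref{thm:generators0infty}, which the authors explicitly say is adapted from that reference, so that is the relevant benchmark for comparison.

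Your outline follows the same route as that benchmark: translate $e^{tA}$ into a convolution semi-group $\{s(t)\}_{t\ge 0}$ of moment sequences via $D(s)D(t)=D(s*t)$, establish that the representing measure is infinitely divisible, then apply the L\'evy--Khinchin formula after taking a formal logarithm and comparing coefficients. The step you flag as the ``main obstacle'' in Step~2 --- passing from a convolution semi-group of moment sequences to an infinitely divisible measure --- is indeed the heart of the matter, and as written you leave it open. The paper's technique (visible in the proof of \Cref{thm:generators0infty}) avoids trying to select a consistent family $(\mu_t)_{t\ge 0}$ of measures altogether, which is the right move given possible indeterminacy. Instead it produces a single infinitely divisible representing measure at $t=1$ directly: choose any representing measure $\mu_{1/k!}$ of $e^{A/k!}$, set $\nu_k := (\mu_{1/k!})^{*k!}$ (each represents $e^A$), extract a vaguely convergent subsequence using that $\rset[x_1,\dots,x_n]$ is an adapted space (\cite[Thm.~1.19]{schmudMomentBook}), and then, for each $l\in\nset$, rerun the compactness argument at scale $1/l$ to obtain $\omega_l$ with $\omega_l^{*l}=\nu$. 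This sidesteps the selection and determinacy issues you worry about. The converse direction in the paper likewise does not re-solve the ODE as you sketch; it instead combines continuity of $\exp$ on the regular Fr\'echet Lie group (\Cref{thm:DregularFrechetLieGroup}) with closedness of the set of positivity preservers (\Cref{lem:closedPosPres}) to pass from rational $t$ to all $t\ge 0$. So your plan is essentially the intended one, but Step~2 as written is a genuine gap, and the vague-compactness construction of the infinitely divisible representing measure is the missing ingredient.
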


\section{$(\fD,\,\cdot\,)$ is a regular Fr\'echet Lie Group with Lie Algebra $(\fd,\,\cdot\,,+)$}
\label{sec:Dfrechet}

We begin with some simple technical facts.

\begin{lem}\label{lem:degreePreserving}
For a linear map $T:\rset[x_1,\dots,x_n]\to\rset[x_1,\dots,x_n]$ the following are equivalent:
\begin{enumerate}[(i)]
\item $\displaystyle T = \sum_{\alpha\in\nset_0^n} q_\alpha\cdot\partial^\alpha$ holds with $\deg q_\alpha \leq |\alpha|$ for all $\alpha\in\nset_0^n$.

\item $T\rset[x_1,\dots,x_n]_{\leq d}\subseteq \rset[x_1,\dots,x_n]_{\leq d}$ holds for all $d\in\nset_0$.
\end{enumerate}
\end{lem}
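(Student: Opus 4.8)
The plan is to prove the two implications separately, with the direction (ii) $\Rightarrow$ (i) being the one requiring the canonical representation and a careful degree count, and (i) $\Rightarrow$ (ii) being essentially a direct computation.

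First I would recall that by the folklore result (\ref{diffT}) every linear map $T:\rset[x_1,\dots,x_n]\to\rset[x_1,\dots,x_n]$ has a unique canonical representation $T=\sum_{\alpha\in\nset_0^n} q_\alpha\cdot\partial^\alpha$ with $q_\alpha\in\rset[x_1,\dots,x_n]$. So the content of (i) is not the \emph{form} of $T$ but the degree bound $\deg q_\alpha\leq|\alpha|$; both statements should be read relative to this unique representation. For (i) $\Rightarrow$ (ii), take $p\in\rset[x_1,\dots,x_n]_{\leq d}$ and apply $T$ termwise. For a monomial $x^\beta$ with $|\beta|\leq d$, the term $q_\alpha\cdot\partial^\alpha x^\beta$ is nonzero only if $\alpha\preceq\beta$, and then it has degree $\deg q_\alpha + (|\beta|-|\alpha|)\leq |\alpha| + (|\beta|-|\alpha|) = |\beta|\leq d$. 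Taking the (finite) sum over $\alpha$ and extending linearly over the monomials of $p$ gives $Tp\in\rset[x_1,\dots,x_n]_{\leq d}$, which is (ii).

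For (ii) $\Rightarrow$ (i), I would argue by contraposition or by induction on $|\alpha|$ that the degree bound is forced. The cleanest route: suppose $T\rset[x_1,\dots,x_n]_{\leq d}\subseteq\rset[x_1,\dots,x_n]_{\leq d}$ for all $d$, and recover the $q_\alpha$ from their defining action. Recall that in the canonical representation $q_\alpha$ is obtained by applying $T$ to suitable monomials and peeling off lower-order terms; concretely, $q_\alpha = \frac{1}{\alpha!}\big(T x^\alpha - \sum_{\beta\precneqq\alpha}\binom{\alpha}{\beta}q_\beta\, x^{\alpha-\beta}\cdot(\alpha-\beta)!/(\alpha-\beta)!\big)$ — more precisely one uses the standard recursion extracting $q_\alpha$ from $T(x^\alpha)$ and the already-known $q_\beta$ for $\beta\precneqq\alpha$. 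Proceeding by induction on $|\alpha|$: assuming $\deg q_\beta\leq|\beta|$ for all $\beta\precneqq\alpha$, the term $T x^\alpha$ lies in $\rset[x_1,\dots,x_n]_{\leq|\alpha|}$ by hypothesis (ii) with $d=|\alpha|$, and each subtracted term $q_\beta\cdot\partial^\beta x^\alpha$ has degree $\leq|\beta|+(|\alpha|-|\beta|)=|\alpha|$ by the inductive hypothesis; hence $\alpha!\, q_\alpha$, being a difference of polynomials of degree $\leq|\alpha|$, has degree $\leq|\alpha|$, completing the induction.

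The main obstacle is bookkeeping rather than any deep idea: one must fix precisely the recursion that extracts $q_\alpha$ from $T$ and from the lower $q_\beta$ (this is implicit in the proof of \cite[Lemma\ 2.3]{netzer10} that establishes (\ref{diffT})), and verify that every term appearing in that recursion respects the degree bound under the inductive hypothesis. A slicker alternative avoiding the explicit recursion is to test $T$ against the monomial basis: evaluate the leading homogeneous part of $T x^\alpha$ and note that $q_\alpha$'s top-degree part can only be detected through $T x^\alpha$ (all $\partial^\beta x^\alpha$ with $\beta\neq 0$ drop the degree), so a term in $q_\alpha$ of degree $>|\alpha|$ would produce a term of degree $>|\alpha|$ in $T x^\alpha$, contradicting (ii). Either way the argument is short; I would present the inductive version since it meshes directly with the structure of the canonical representation.
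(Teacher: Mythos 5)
Your proposal is correct and follows essentially the same route as the paper: the (i) $\Rightarrow$ (ii) direction is a direct degree count on $q_\alpha\cdot\partial^\alpha x^\beta$, and your (ii) $\Rightarrow$ (i) induction on $|\alpha|$ via the reconstruction of $q_\alpha$ from $Tx^\alpha$ is the same argument the paper phrases as a minimal-counterexample contradiction (the paper picks the lexicographically smallest $\alpha$ with $\deg q_\alpha>|\alpha|$ and observes that $\deg Tx^\alpha>|\alpha|$ then follows, since all lower-order terms $q_\beta\partial^\beta x^\alpha$ with $\beta\preceq\alpha$, $\beta\neq\alpha$, have degree $\leq|\alpha|$). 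Your "slicker alternative" at the end — testing $T$ against the monomial $x^\alpha$ and noting that only $q_\alpha$ can contribute to degree $>|\alpha|$ — is in fact the paper's proof verbatim in spirit.
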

\begin{proof}
(i) $\Rightarrow$ (ii):
Let $p\in\rset[x_1,\dots,x_n]$.
Then $\deg (q_\alpha\cdot\partial^\alpha p) \leq \deg p$ for all $\alpha\in\nset_0^n$ and hence $T\rset[x_1,\dots,x_n]_{\leq d}\subseteq\rset[x_1,\dots,x_n]_{\leq d}$.

(ii) $\Rightarrow$ (i): 
Assume to the contrary that there is an $\alpha\in\nset_0^n$ with $\deg q_\alpha > |\alpha|$.
Take the smallest of these $\alpha$'s  with respect to the lexicographic order.
Then $\deg (q_\alpha\cdot\partial^\alpha x^\alpha) = \deg q_\alpha > |\alpha|$ which contradicts (ii).
\end{proof}

\begin{lem}\label{lem:groupContinuousMultInv}
The following hold:
\begin{enumerate}[(i)]
\item $(\fD,\,\cdot\,)$ is a group.

\item $\cdot:\fD\times\fD\to\fD$, $(A,B)\mapsto A\cdot B$ is continuous in the Fr\'echet topology of $\fD$.

\item $\cdot^{-1}:\fD\to\fD$, $A\mapsto A^{-1}$ is continuous in the Fr\'echet topology of $\fD$.
\end{enumerate}
\end{lem}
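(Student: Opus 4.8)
The plan is to prove the three claims of \Cref{lem:groupContinuousMultInv} in order, exploiting the vector space isomorphism between $\fD$ (resp. $\fd$) and a space of sequences that was already used in the proof of \Cref{lem:polySharpPartialFrechetSpace}.

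For (i), I would first observe that $\fd$ is closed under composition: if $A,B\in\fd$, then by \Cref{lem:degreePreserving} both preserve each $\rset[x_1,\dots,x_n]_{\leq d}$, hence so does $AB$, and since the coefficient polynomials of a composition of differential operators with polynomial coefficients are again polynomials, \Cref{lem:degreePreserving} gives $AB\in\fd$. Then $\fd$ is an associative algebra with unit $\one$. The set $\fD$ is by definition the subset of $\fd$ consisting of the injective elements; since each $A\in\fd$ restricts to an endomorphism of the finite-dimensional space $V_d := \rset[x_1,\dots,x_n]_{\leq d}$, injectivity of $A$ on all of $\rset[x_1,\dots,x_n]$ is equivalent to $A|_{V_d}$ being bijective for every $d$. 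The inverse map $A^{-1}$ is then defined on each $V_d$ by the finite-dimensional inverse, and these fit together compatibly because $A$ commutes with the inclusions $V_d\hookrightarrow V_{d+1}$; one must check $A^{-1}$ again lies in $\fd$, i.e. has polynomial coefficients of the correct degree, which follows from \Cref{lem:degreePreserving} applied to $A^{-1}$ (it preserves each $V_d$) together with the fact that a differential operator preserving all $V_d$ automatically has the canonical representation \eqref{diffT} with $\deg q_\alpha\le|\alpha|$. Closure of $\fD$ under products is then clear since injective composed with injective is injective.

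For (ii), I would work in coordinates. Writing $A = \sum_{\alpha}\big(\sum_{|\beta|\le|\alpha|} a_{\alpha,\beta} x^\beta\big)\partial^\alpha$ and likewise for $B$, the coefficients of $C = AB$ are obtained by the Leibniz rule: expanding $x^\gamma\partial^\gamma$ applied to $x^\delta\partial^\delta$ produces, for each fixed output multi-index, a \emph{finite} universal polynomial expression in finitely many of the $a_{\alpha,\beta}$ and $b_{\gamma,\delta}$ (only those with $|\alpha|,|\gamma|$ bounded by the output degree contribute, by the degree constraints). Since each coordinate of the product depends polynomially — hence continuously — on finitely many coordinates of the factors, and the Fréchet topology on $\fD$ (inherited from the sequence space $\cS$ of \Cref{lem:polySharpPartialFrechetSpace}) is precisely the topology of coordinate-wise convergence, the map $(A,B)\mapsto AB$ is continuous. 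This is the routine but slightly bookkeeping-heavy step: one just needs to make precise that only finitely many input coordinates enter each output coordinate, which is exactly the content of the degree bound $\deg q_\alpha\le|\alpha|$.

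For (iii), the key point is Cramer's rule on each block $V_d$. Fix $d$; the map $A\mapsto A|_{V_d}$ is a continuous (indeed coordinate-polynomial) map from $\fD$ into $GL(V_d)$, and on $GL(V_d)$ matrix inversion is continuous (rational with nonvanishing denominator $\det$). Since $A^{-1}|_{V_d} = (A|_{V_d})^{-1}$, each coordinate of $A^{-1}$ — which is a coordinate of $A^{-1}|_{V_d}$ for $d$ large enough relative to that coordinate's index — depends continuously on $A$. Hence $A\mapsto A^{-1}$ is continuous coordinate-wise, i.e. continuous in the Fréchet topology. I expect the main obstacle to be purely organizational rather than deep: one must be careful that the ``finitely many coordinates'' claims in (ii) and (iii) are justified uniformly, and that $\det(A|_{V_d})\neq 0$ for $A\in\fD$ — which is exactly the condition $\ker T=\{0\}$ built into the definition of $\fD$ — so that the inversion formula is legitimately applicable on all of $\fD$.
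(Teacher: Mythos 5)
Your proof is correct, and for parts (i) and (iii) it takes a genuinely different route from the paper's; part (ii) is essentially the same argument in both.

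For (i), the paper constructs the inverse explicitly: writing $A=\sum_\alpha a_\alpha\partial^\alpha$ and $B=\sum_\beta b_\beta\partial^\beta$, it solves the coefficient equations of $AB=\one$ recursively, beginning with $b_0=a_0^{-1}$ (legitimate because $a_0=A1$ is a nonzero constant, since $\deg a_0\le 0$) and then $b_\alpha=-\tilde c_\alpha/a_0$. You instead assemble $A^{-1}$ abstractly from the finite-dimensional inverses $(A|_{V_d})^{-1}$ on $V_d:=\rset[x_1,\dots,x_n]_{\leq d}$, which exist precisely because $\ker A=\{0\}$ forces each restriction $A|_{V_d}$ to be bijective; compatibility across $d$ is automatic, and \Cref{lem:degreePreserving} then puts the resulting operator $A^{-1}$ back into $\fD$. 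For (iii), the paper appeals to the continuity of the same recursion, whereas you use continuity of matrix inversion on $GL(V_d)$ via Cramer's rule; both work because the Fr\'echet topology on $\fD$ is exactly the topology of convergence of the restrictions $A|_{V_d}$ for every $d$. Both arguments ultimately exploit the same structural fact, namely that every element of $\fD$ (resp.\ $\fd$) stabilizes the degree filtration $(V_d)_d$. Your version uses this more uniformly, and in particular it sidesteps the bookkeeping the paper's recursion implicitly requires: the term $\tilde c_\alpha$ does involve $b_\alpha$ itself through its nontrivial derivatives $\partial^\gamma b_\alpha$, $\gamma\neq 0$, so making the recursion manifestly triangular really means recursing over the homogeneous components of the $b_\alpha$, a point the finite-dimensional argument avoids entirely.
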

\begin{proof}
(i): Since the multiplication in $\fD$ is the composition of maps it is associative.
Let $A,B\in\fD$. Then
\[A\rset[x_1,\dots,x_n]_{\leq d},\ B\rset[x_1,\dots,x_n]_{\leq d}\ \subseteq\ \rset[x_1,\dots,x_n]_{\leq d}\]
holds for all $d\in\nset_0$ by \Cref{lem:degreePreserving} and hence we have
\[AB\rset[x_1,\dots,x_n]_{\leq d}\ \subseteq\ \rset[x_1,\dots,x_n]_{\leq d}.\]
Additionally, we have $A1\neq 0$ and $B1\neq 0$, i.e., $AB1 \neq 0$.
Therefore, $AB\in\fD$ again by \Cref{lem:degreePreserving}.
Thus, $(\fD,\,\cdot\,)$ is a semi-group.

It is therefore sufficient to show that for any $A\in\fD$ there exists a $B\in\fD$ such that $AB = BA = \one$.
Let $A = \sum_{\alpha\in\nset_0^n} a_\alpha\cdot\partial^\alpha$ and $B = \sum_{\beta\in\nset_0^n} b_\beta\cdot\partial^\beta$.
Then
\[AB = \sum_{\alpha,\beta\in\nset_0^n} a_\alpha\partial^\alpha b_\beta\partial^\beta = C = \sum_{\gamma\in\nset_0^n} c_\gamma\partial^\gamma\in\fD.\]
Hence, $AB = \one$ is equivalent to
\begin{align}
0\neq c_0 = C1 = AB1 = a_0 b_0\qquad &\Rightarrow\qquad b_0 = a_0^{-1}\neq 0\label{eq:inverse1}\\
0 = c_\alpha = a_0 b_\alpha + \tilde{c}_\alpha\qquad &\Rightarrow\qquad b_\alpha = -\frac{\tilde{c}_\alpha}{a_0}\in\rset[x_1,\dots,x_n]_{\leq |\beta|}\label{eq:inverse2}
\end{align}
for all $\alpha\in\nset_0^n$.
Hence, $A$ has a right inverse $B$, i.e. $AB = \one$.
Similarly, $A$ has a left inverse $\tilde{B}$, i.e. $\tilde{B}A = \one$.
The associativity yields $\tilde{B} = \tilde{B}\one = \tilde{B}AB = \one B = B$, so that $A$ has an inverse in $\fD$.

(ii):
Let $(A_k)_{k\in\nset_0}$ and $(B_k)_{k\in\nset_0}$ be sequences in $\fD$ such that $A_k\to A\in\fD$ and $B_k\to B\in\fD$ as $k\to\infty$.
Since $A_k|_{\rset[x_1,\dots,x_n]_{\leq d}}$, $B_k|_{\rset[x_1,\dots,x_n]_{\leq d}}$, $A|_{\rset[x_1,\dots,x_n]_{\leq d}}$, and $B|_{\rset[x_1,\dots,x_n]_{\leq d}}$ are bounded operators, we get $A_k B_k p \to ABp$ for $k\to\infty$ and all $p\in\rset[x_1,\dots,x_n]$, i.e., $A_k B_k \to AB$ in the Fr\'echet topology of $\fD$. Hence, the multiplication is continuous.

(iii):
Let $(A_k)_{k\in\nset_0}$ be a sequence in $\fD$ such that $A_k\to A\in\fD$ as $k{\to}\infty$. Then,  $a_{k,0} = A_k1 \to A1 = a_0\neq 0$.
Hence, we have $A_k^{-1}\to A^{-1}$ since the equations (\ref{eq:inverse1}) and (\ref{eq:inverse2}) for the polynomial coefficients of the inverse are continuous.
\end{proof}

\begin{lem}
 $(\fd,\,\cdot\,,+)$ is an algebra.
\end{lem}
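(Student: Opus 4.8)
The plan is to verify the two ingredients of the definition of an (associative, unital) algebra separately: that $\fd$ is a linear subspace of the space of all linear operators on $\rset[x_1,\dots,x_n]$, and that it is closed under composition, the latter being the only point that requires the structural input of \Cref{lem:degreePreserving}. Bilinearity, associativity, and the existence of a unit will then come essentially for free.

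First I would observe that $\fd$ is a vector space. Indeed, $\fd$ is cut out inside the space of all linear maps $\rset[x_1,\dots,x_n]\to\rset[x_1,\dots,x_n]$ (each of which has a unique canonical representation $\sum_{\alpha}q_\alpha\cdot\partial^\alpha$ by the folklore result cited after \eqref{diffT}) by the conditions $\deg q_\alpha\leq|\alpha|$ for all $\alpha\in\nset_0^n$, and these conditions are preserved under addition and scalar multiplication since $\deg(q_\alpha+\lambda\,\tilde q_\alpha)\leq\max\{\deg q_\alpha,\deg\tilde q_\alpha\}\leq|\alpha|$. Hence $\fd$ is closed under $+$ and under scaling; equivalently, by \Cref{lem:degreePreserving}, $\fd$ is exactly the set of linear maps leaving every $\rset[x_1,\dots,x_n]_{\leq d}$ invariant, and this invariance is obviously stable under taking linear combinations.

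Next I would show $\fd$ is closed under composition. Let $A,B\in\fd$. By the implication (i)$\Rightarrow$(ii) of \Cref{lem:degreePreserving}, both $A$ and $B$ leave $\rset[x_1,\dots,x_n]_{\leq d}$ invariant for every $d\in\nset_0$; therefore $AB$ does as well. Since $AB$ is again a linear map $\rset[x_1,\dots,x_n]\to\rset[x_1,\dots,x_n]$, the implication (ii)$\Rightarrow$(i) of \Cref{lem:degreePreserving} yields that its canonical representation $AB=\sum_{\gamma}c_\gamma\cdot\partial^\gamma$ satisfies $\deg c_\gamma\leq|\gamma|$, i.e.\ $AB\in\fd$. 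Finally, composition of linear maps is associative and bilinear with respect to $+$, i.e.\ $(aA+bB)C=aAC+bBC$ and $C(aA+bB)=aCA+bCB$, and $\one\in\fd$ acts as a two-sided unit; thus $(\fd,\,\cdot\,,+)$ is an algebra.

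I do not expect a genuine obstacle here; the only point to be a little careful about is to invoke \Cref{lem:degreePreserving} in both directions rather than trying to multiply out the two infinite sums $\sum_\alpha q_\alpha\partial^\alpha$ and $\sum_\beta \tilde q_\beta\partial^\beta$ and track degrees through the Leibniz rule by hand — the invariance characterization makes the closure under composition immediate and avoids that bookkeeping.
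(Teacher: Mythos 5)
Your argument for closure under composition is exactly the paper's: apply \Cref{lem:degreePreserving} in the direction (i)$\Rightarrow$(ii) to get that $A$ and $B$ preserve each $\rset[x_1,\dots,x_n]_{\leq d}$, conclude the same for $AB$, and then apply (ii)$\Rightarrow$(i) to land back in $\fd$. You spell out the vector-space, bilinearity, associativity and unit checks which the paper dismisses with "it is sufficient to show $AB\in\fd$", but the substantive step is identical.
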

\begin{proof}
It is sufficient to show that $AB\in\fd$ holds for all $A,B\in\fd$.
Let $A,B\in\fd$. 
By \Cref{lem:degreePreserving} we have
\[A\rset[x_1,\dots,x_n]_{\leq d},\ B\rset[x_1,\dots,x_n]_{\leq d}\ \subseteq\ \rset[x_1,\dots,x_n]_{\leq d}\]
for all $d\in\nset_0$ and hence
\[AB\rset[x_1,\dots,x_n]_{\leq d}\ \subseteq\ \rset[x_1,\dots,x_n]_{\leq d}.\]
This gives $AB\in\fd$ by \Cref{lem:degreePreserving}.
\end{proof}

\begin{lem}\label{lem:expMapSmoothTangent}
The exponential map
\[\exp:\fd\to\fD,\quad A\mapsto \exp A := \sum_{k\in\nset_0} \frac{A^k}{k!}\]
is smooth and
\[\frac{\diff}{\diff t} \exp(tA) \Big|_{t=0} = A\]
holds for all $A\in\fd$.
\end{lem}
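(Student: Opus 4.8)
The plan is to reduce everything to finite-dimensional linear algebra via the degree filtration. First I would record that every $A\in\fd$ restricts, for each $d\in\nset_0$, to a linear endomorphism $r_d(A):=A|_{\rset[x_1,\dots,x_n]_{\leq d}}$ of the finite-dimensional space $\rset[x_1,\dots,x_n]_{\leq d}$ (this is exactly \Cref{lem:degreePreserving}), that $r_d$ is a continuous unital algebra homomorphism into the matrix algebra $\mathrm{End}(\rset[x_1,\dots,x_n]_{\leq d})$, and that these restrictions are compatible as $d$ grows. Hence $\exp A:=\sum_{k\in\nset_0}A^k/k!$ makes sense: on $\rset[x_1,\dots,x_n]_{\leq d}$ it agrees with the ordinary matrix exponential $\exp(r_d(A))$, which converges in operator norm and is invertible with inverse $\exp(-r_d(A))$. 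Gluing the $\exp(r_d(A))$ over $d$ (and recalling $\rset[x_1,\dots,x_n]=\bigcup_d\rset[x_1,\dots,x_n]_{\leq d}$) yields a linear map preserving every $\rset[x_1,\dots,x_n]_{\leq d}$, so $\exp A\in\fd$ by \Cref{lem:degreePreserving}, and injective, so $\exp A\in\fD$; the partial sums of the series converge to it coordinate-wise, so the series converges in $\fd$ and $r_d(\exp A)=\exp(r_d(A))$.

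For smoothness I would use that, by \Cref{lem:polySharpPartialFrechetSpace} and its proof, both $\fd$ and $\fD$ carry the coordinate-wise topology, i.e.\ they embed into a countable product of copies of $\rset$ whose coordinates are the scalars $c_{\alpha,\beta}$ in $A=\sum_{\alpha}\bigl(\sum_{|\beta|\leq|\alpha|}c_{\alpha,\beta}x^\beta\bigr)\partial^\alpha$. For such a target, smoothness of a map is equivalent to smoothness of each coordinate function. So I would fix a coordinate $(\gamma,\delta)$ and note that the coefficient $q_\gamma$ of a $T\in\fD$ is determined by the values $T(x^\beta)$, $|\beta|\leq|\gamma|$, hence by $r_{|\gamma|}(T)$; therefore $A\mapsto c_{\gamma,\delta}(\exp A)$ factors as
\[\fd\ \xrightarrow{\ r_{|\gamma|}\ }\ \mathrm{End}(\rset[x_1,\dots,x_n]_{\leq|\gamma|})\ \xrightarrow{\ \exp\ }\ \mathrm{End}(\rset[x_1,\dots,x_n]_{\leq|\gamma|})\ \xrightarrow{\ \ell\ }\ \rset,\]
where $\ell$ is the linear functional extracting the relevant entry and I used $r_{|\gamma|}(\exp A)=\exp(r_{|\gamma|}(A))$. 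The first map is linear and continuous (it reads off only the finitely many $c_{\alpha,\beta}(A)$ with $|\alpha|\leq|\gamma|$), the matrix exponential is real-analytic, and $\ell$ is linear; so the composite is smooth. Thus $\exp:\fd\to\fD$ is smooth.

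For the derivative identity I would differentiate the curve $t\mapsto\exp(tA)$ coordinate-wise: in coordinate $(\gamma,\delta)$ it equals $t\mapsto\ell(\exp(t\,r_{|\gamma|}(A)))$, and the finite-dimensional fact $\frac{\diff}{\diff t}\exp(tM)|_{t=0}=M$ gives derivative $\ell(r_{|\gamma|}(A))=c_{\gamma,\delta}(A)$ at $t=0$; since differentiation of curves in the coordinate-wise topology is coordinate-wise, $\frac{\diff}{\diff t}\exp(tA)|_{t=0}=A$.

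The only real subtlety I anticipate is the topological bookkeeping: identifying $\fd$ and $\fD$ with (closed subspaces of) a countable product of lines, checking $r_d(\exp A)=\exp(r_d(A))$ from continuity of $r_d$, and invoking the componentwise criteria for smoothness of maps into, and for differentiation of curves into, such Fr\'echet spaces. Once these are in place, the real-analyticity (indeed entireness) of the finite-dimensional matrix exponential supplies all the analytic content.
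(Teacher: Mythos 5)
Your proof is correct and follows essentially the same route as the paper: restrict $A$ to the finite-dimensional invariant subspaces $\rset[x_1,\dots,x_n]_{\leq d}$, observe that $\exp A$ agrees there with the ordinary matrix exponential, and deduce smoothness and the derivative identity from the finite-dimensional case. You are somewhat more explicit than the paper about the topological bookkeeping (the coordinate-wise criterion for smoothness of maps into the Fr\'echet space and for differentiating curves), a detail the paper compresses into the sentence ``This also shows that $\exp$ is smooth since it is smooth for the bounded operator (matrix) case.''
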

\begin{proof}
Let $A\in\fd$.
For $d\in\nset_0$ we define $A_d := A|_{\rset[x_1,\dots,x_n]_{\leq d}}$, i.e., $A_d$ is a bounded operator (matrix) and $\exp A_d$ is well-defined.
We have that
\[(\exp A)p = \sum_{k\in\nset_0} \frac{A^k}{k!}p = \sum_{k\in\nset_0} \frac{A^k_d}{k!} p = (\exp A_d)p\]
holds for all $p\in\rset[x_1,\dots,x_n]$ and $d\geq \deg p$.
This also shows that $\exp$ is smooth since it is smooth for the bounded operator (matrix) case.
Additionally, we have
\[\frac{\diff}{\diff t} \exp(tA) \Big|_{t=0} p = \frac{\diff}{\diff t} \exp(tA_d) \Big|_{t=0} p = A_d p = Ap\]
for all $p\in\rset[x_1,\dots,x_n]$ with $d \geq \deg p$.
\end{proof}

In the previous proof we actually proved the following as a byproduct.

\begin{cor}\label{cor:Ad}
Let $A\in\fd$ and $d\in\nset_0$.
Set $A_d := A|_{\rset[x_1,\dots,x_n]_{\leq d}}$.
Then
\[\exp(tA)p = \exp(tA_d)p\]
holds for all $t\in\rset$ and $p\in\rset[x_1,\dots,x_n]_{\leq d}$.
\end{cor}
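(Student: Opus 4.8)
The plan is to recognize that this corollary is, as the preceding remark indicates, precisely the content that was established inside the proof of \Cref{lem:expMapSmoothTangent}, now recorded with the scalar $t$ made explicit. The essential input is that $A\in\fd$ forces, by \Cref{lem:degreePreserving}, the invariance $A\,\rset[x_1,\dots,x_n]_{\leq d}\subseteq\rset[x_1,\dots,x_n]_{\leq d}$ for every $d\in\nset_0$. Hence each iterate $A^k$ preserves $\rset[x_1,\dots,x_n]_{\leq d}$ as well, and restricting to this subspace commutes with taking powers, so $A^k p = A_d^k p$ for all $p\in\rset[x_1,\dots,x_n]_{\leq d}$ and all $k\in\nset_0$.

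Next I would fix $t\in\rset$ and note $(tA)_d = tA_d$, together with the fact that $A_d$ is a linear endomorphism of the finite-dimensional space $\rset[x_1,\dots,x_n]_{\leq d}$ — i.e.\ a matrix once a basis is chosen — so that $\exp(tA_d)=\sum_{k\in\nset_0}\frac{t^k A_d^k}{k!}$ converges absolutely in $\rset[x_1,\dots,x_n]_{\leq d}$ for every real $t$, positive or negative. Evaluating both exponentials at $p\in\rset[x_1,\dots,x_n]_{\leq d}$ and using the identity $A^k p = A_d^k p$ term by term then gives
\[
\exp(tA)p \;=\; \sum_{k\in\nset_0}\frac{t^k A^k}{k!}\,p \;=\; \sum_{k\in\nset_0}\frac{t^k A_d^k}{k!}\,p \;=\; \exp(tA_d)\,p,
\]
which is the assertion. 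The left-hand series is meaningful because, although $A$ acts on all of $\rset[x_1,\dots,x_n]$, its action on the chosen $p$ remains inside the finite-dimensional subspace $\rset[x_1,\dots,x_n]_{\leq d}$, where convergence has just been justified; this is exactly the finite-dimensional reduction already used for $t=1$ in \Cref{lem:expMapSmoothTangent}.

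There is essentially no obstacle: the only point requiring a word of justification is the interchange of the infinite summation with evaluation at $p$, and that is legitimate precisely because everything takes place in the finite-dimensional space $\rset[x_1,\dots,x_n]_{\leq d}$, on which $A_d$ is a bounded operator. Alternatively one may simply invoke \Cref{lem:expMapSmoothTangent} with $A$ replaced by $tA\in\fd$ to obtain $\exp(tA)p=\exp((tA)_d)p=\exp(tA_d)p$ whenever $d\geq\deg p$, which covers every $p\in\rset[x_1,\dots,x_n]_{\leq d}$; passing from $d=\deg p$ to an arbitrary $d\geq\deg p$ is immediate, since enlarging $d$ only embeds the smaller invariant subspace into a larger one without altering the action of $A$ or of $A_d$ on $p$.
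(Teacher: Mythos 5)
Your proof is correct and matches the paper's intent exactly: the paper records this corollary as a byproduct of the proof of \Cref{lem:expMapSmoothTangent}, which already established $(\exp A)p=(\exp A_d)p$ for $d\geq\deg p$, and you simply spell out the same degree-preservation and finite-dimensional-restriction argument (or equivalently invoke the lemma with $tA$ in place of $A$). No gaps.
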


From \Cref{cor:Ad} we obtain for the matrix $\exp$-function the following.

\begin{cor}\label{cor:expLimAd}
Let $A\in\fd$.
Then we have
\[\exp A = \lim_{k\to\infty} \left(\one + \frac{A}{k}\right)^k = \lim_{k\to\infty} \left(\one - \frac{A}{k}\right)^{-k}.\]
\end{cor}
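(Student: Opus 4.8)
The plan is to reduce the infinite-dimensional statement to the classical matrix identity via \Cref{cor:Ad}. Recall that for a bounded operator (or matrix) $M$ on a finite-dimensional space one has the standard formulas $\exp M = \lim_{k\to\infty}(\mathrm{Id}+M/k)^k = \lim_{k\to\infty}(\mathrm{Id}-M/k)^{-k}$, the latter being valid for $k$ large enough that $\mathrm{Id}-M/k$ is invertible. So the task is only to transport these limits, which live a priori in each finite-dimensional block, up to the Fr\'echet topology of $\fD$.

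First I would fix $A\in\fd$ and note that, since $\fD$ carries the coordinate-wise (equivalently, the pointwise-on-$\rset[x_1,\dots,x_n]$) topology, it suffices to show that for every fixed $p\in\rset[x_1,\dots,x_n]$ the sequences $(\one+A/k)^k p$ and $(\one-A/k)^{-k}p$ converge to $(\exp A)p$ in $\rset[x_1,\dots,x_n]_{\leq d}$, where $d := \deg p$. Next I would observe that each of the operators $\one\pm A/k$ lies in $\fd$ (indeed in $\fD$ for $k$ large, since $(\one\pm A/k)1 = 1\pm a_0/k\neq 0$ eventually, where $a_0 = A1$), so by \Cref{lem:degreePreserving} they all leave $\rset[x_1,\dots,x_n]_{\leq d}$ invariant. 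Restricting everything to this finite-dimensional subspace and writing $A_d := A|_{\rset[x_1,\dots,x_n]_{\leq d}}$, we have $(\one\pm A/k)|_{\rset[x_1,\dots,x_n]_{\leq d}} = \mathrm{Id}_d \pm A_d/k$, and hence $(\one\pm A/k)^k p = (\mathrm{Id}_d\pm A_d/k)^k p$ and likewise for the inverse powers. By \Cref{cor:Ad}, $(\exp A)p = (\exp A_d)p$.

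Then I would simply invoke the finite-dimensional identity for the matrix $A_d$: $(\mathrm{Id}_d + A_d/k)^k \to \exp A_d$ and $(\mathrm{Id}_d - A_d/k)^{-k}\to \exp A_d$ in operator norm on $\rset[x_1,\dots,x_n]_{\leq d}$, hence applied to $p$ these converge to $(\exp A_d)p = (\exp A)p$. Since $p$ was arbitrary, the convergence holds coordinate-wise, i.e.\ in the Fr\'echet topology of $\fD$, which is exactly the claim. One should also remark that the inverses $(\one-A/k)^{-k}$ are taken in $\fD$ (they exist for $k$ large by \Cref{lem:groupContinuousMultInv}(i) once $\one - A/k\in\fD$) and that their restriction to $\rset[x_1,\dots,x_n]_{\leq d}$ is the matrix inverse of $\mathrm{Id}_d - A_d/k$, which follows because restriction to an invariant subspace is an algebra homomorphism.

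The only real subtlety — and it is minor — is the bookkeeping around $(\one-A/k)^{-k}$: one must check that for each fixed $d$ the matrix $\mathrm{Id}_d - A_d/k$ is invertible for all large $k$ (clear, since $\|A_d/k\|\to 0$), and that the operator-level inverse in $\fD$ furnished by \Cref{lem:groupContinuousMultInv} agrees block-by-block with these matrix inverses (immediate from invariance of $\rset[x_1,\dots,x_n]_{\leq d}$). No new estimates are needed; the entire content is that the Fr\'echet topology on $\fD$ is the projective limit of the finite-dimensional block topologies, so a classical matrix limit in each block assembles into the asserted limit in $\fD$.
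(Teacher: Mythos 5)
Your proposal is essentially the paper's own proof: reduce to the finite-dimensional restriction $A_d := A|_{\rset[x_1,\dots,x_n]_{\leq d}}$, invoke \Cref{cor:Ad} to replace $\exp A$ by $\exp A_d$ on polynomials of degree $\leq d$, and then apply the classical matrix limits in each block, which yields coordinatewise convergence in the Fr\'echet topology. One small imprecision: your parenthetical that $\one - A/k$ lies in $\fD$ for $k$ large is not guaranteed globally (e.g.\ for $A = x\partial_x$ one has $(\one - A/k)x^k = 0$ for every $k$, so $\one - A/k$ is never injective on all of $\rset[x]$); what is true, and what both you and the paper actually use, is that the \emph{restriction} $\mathrm{Id}_d - A_d/k$ is invertible for $k$ large depending on $d$, which suffices since the limit is taken pointwise.
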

\begin{proof}
Let $p\in\rset[x_1,\dots,x_n]$ and $d\geq \deg p$.
Again we set $A_d := A|_{\rset[x_1,\dots,x_n]_{\leq d}}$.
Since $A_d:\rset[x_1,\dots,x_n]_{\leq d}\to\rset[x_1,\dots,x_n]_{\leq d}$ is a bounded operator (matrix) we have
\[\exp A_d = \lim_{k\to\infty} \left( \one + \frac{A_d}{k}\right)^k = \lim_{k\to\infty} \left( \one - \frac{A_d}{k}\right)^{-k}.\]
Hence, by \Cref{cor:Ad} we have
\[(\exp A) p = (\exp A_d) p = \lim_{k\to\infty} \left(\one + \frac{A_d}{k}\right)^k p = \lim_{k\to\infty} \left( \one + \frac{A}{k}\right)^k p\]
and
\[(\exp A) p = (\exp A_d) p = \lim_{k\to\infty} \left(\one - \frac{A_d}{k}\right)^{-k} p = \lim_{k\to\infty} \left( \one - \frac{A}{k}\right)^{-k} p.\]
Since $p\in\rset[x_1,\dots,x_n]$ was arbitrary the assertion is proved.
\end{proof}

In summary, we have proved the following theorem.

\begin{thm}\label{thm:DregularFrechetLieGroup}
$(\fD,\,\cdot\,)$ is a regular Fr\'echet Lie group with the (Fr\'echet) Lie algebra $(\fd,\,\cdot\,,+)$.
The exponential map is given by
\[\exp:\fd\to\fD,\quad A\mapsto \exp A := \sum_{k\in\nset_0} \frac{A^k}{k!},\]
it is smooth, and it fulfills
\[\frac{\diff}{\diff t} \exp(tA) \Big|_{t=0} = A\]
for all $A\in\fd$.
\end{thm}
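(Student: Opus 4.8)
The plan is to verify the six conditions of \Cref{dfn:frechetLieGroup} one by one, collecting the pieces already established in \Cref{sec:Dfrechet}. Most of the work is already done: \Cref{lem:polySharpPartialFrechetSpace} (applied to the real coefficients) shows that $\fd$ is a Fr\'echet space and $\fD$ is an open subset of the Fr\'echet space $\fd$, namely the complement of the closed hyperplane-like set $\{T : T\one = 0\}$; this gives condition (i), the smooth Fr\'echet manifold structure, with a single global chart so that $T_e\fD \cong \fd$, which is condition (iv). Condition (ii) is \Cref{lem:groupContinuousMultInv}(i). For condition (iii), smoothness (not just continuity) of $(A,B)\mapsto A\cdot B^{-1}$, I would argue that on each invariant subspace $\rset[x_1,\dots,x_n]_{\leq d}$ the map is a composition of the bilinear (hence smooth) matrix multiplication with matrix inversion, and matrix inversion is smooth on the open set of invertible matrices; since the Fr\'echet topology is the projective limit of these finite-dimensional pictures and the coefficient formulas \eqref{eq:inverse1}--\eqref{eq:inverse2} are rational with nonvanishing denominator $a_0$, smoothness in the Fr\'echet sense follows coordinate-wise. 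Condition (v) is exactly \Cref{lem:expMapSmoothTangent}.

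The remaining point, condition (vi), is the one I expect to be the main obstacle: one must check that a curve $t\mapsto c(t)\in\fD$ which is $C^1$ with respect to the Fr\'echet topology (i.e.\ each coefficient $q_\alpha$, equivalently each scalar $c_{\alpha,\beta}(t)$, is $C^1$) is genuinely a $C^1$-curve in the manifold sense, and conversely. Here the single global chart helps enormously: the manifold structure on $\fD$ is inherited from the ambient Fr\'echet space $\fd$, so a curve is $C^1$ in the manifold sense precisely when it is $C^1$ as an $\fd$-valued curve, which by definition of the Fr\'echet topology on $\fd$ (coordinate-wise convergence, \Cref{lem:polySharpPartialFrechetSpace}) means each coordinate function is $C^1$. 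Thus the two notions of $C^1$-curve literally coincide, and (vi) holds; the subtlety is only to make sure one is using the correct (convenient/Michal--Bastiani) notion of differentiability and that passing to coordinates is legitimate, which it is because $\fd$ carries the product-type Fr\'echet structure.

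Having verified (i)--(vi), \Cref{thm:DregularFrechetLieGroup} follows, and it remains to confirm that the Lie algebra is $(\fd,\,\cdot\,,+)$ with the commutator bracket. The tangent space at $\one$ is $\fd$ by the global chart; the bracket on $T_e\fD$ is the commutator of the associative product, and $\fd$ is closed under this product (indeed under $\,\cdot\,$ itself) by the lemma showing $(\fd,\,\cdot\,,+)$ is an algebra, so $[A,B] = AB - BA \in \fd$. The formula for $\exp$ and the identities $\frac{\diff}{\diff t}\exp(tA)|_{t=0} = A$ are restated directly from \Cref{lem:expMapSmoothTangent}. I would close by remarking that \Cref{cor:Ad} and \Cref{cor:expLimAd} are not needed for the theorem itself but record the practically useful fact that $\exp$ on $\fD$ is computed blockwise by ordinary matrix exponentials on each $\rset[x_1,\dots,x_n]_{\leq d}$, which is what makes subsequent computations with $e^{tA}$ tractable.
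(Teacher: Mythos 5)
Your proposal is correct and follows the same overall route as the paper: verify conditions (i)--(vi) of \Cref{dfn:frechetLieGroup} using \Cref{lem:groupContinuousMultInv}, \Cref{lem:expMapSmoothTangent}, and the observation that a curve in $\fD$ is $C^m$ iff all its finite-dimensional coefficient curves are. You are actually a bit more careful than the paper on one point: the paper disposes of condition (iii) by citing \Cref{lem:groupContinuousMultInv}, which only establishes \emph{continuity} of $(A,B)\mapsto A\cdot B^{-1}$, whereas the definition requires \emph{smoothness}; your coordinate-wise argument (multiplication and inversion are rational with nonvanishing denominator $a_0$ on each $\rset[x_1,\dots,x_n]_{\leq d}$) supplies the missing smoothness, and you likewise make the identification $T_\one\fD\cong\fd$ via the global chart ($\fD$ open in $\fd$) explicit, which the paper leaves implicit.
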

\begin{proof}
We have to check that the conditions (i) -- (vi) in \Cref{dfn:frechetLieGroup} are fullfilled.
(i) is clear.
(ii) and (iii) are \Cref{lem:groupContinuousMultInv}.
(iv) and (v) are \Cref{lem:expMapSmoothTangent}.

(vi): Let us consider a curve $F:\rset\to \fD$, $F(t) = \sum_{\alpha\in\nset_0^n} F_\alpha(t)\cdot\partial^\alpha$ with $F_\alpha:\rset\to\rset[x_1,\dots,x_n]_{\leq |\alpha|}$. Clearly, this curve is $C^m$, $m\in\nset_0$, if and only if all  $F_\alpha$ are $C^m$.
The functions $F_\alpha$ being $C^m$ are well-defined since all $\rset[x_1,\dots,x_n]_{\leq |\alpha|}$ are finite dimensional and hence have a unique (Euclidean) topology.
\end{proof}

\section{$K$-Positivity Preserver}
\label{sec:KposPres}

We begin with a simple method to construct non-constant positivity preservers.

\begin{exm}
Let $p=(p_1,\dots,p_n)\in\rset[x_1,\dots,x_n]^n$ and let $s = (s_\alpha)_{\alpha\in\nset_0^n}$ be a moment sequence.
Then
\[T = \sum_{\alpha\in\nset_0^n} \frac{p^{\alpha}\cdot s_\alpha}{\alpha!}\cdot\partial^\alpha\]
is a positivity preserver with non-constant coefficients.
\exmsymbol
\end{exm}
\begin{proof}
For any $y\in\rset^n$ we have that $p_y := (p^\alpha(y))_{\alpha\in\nset_0^n}$ is a moment sequence with representing measure $\delta_{p(y)}$. 
Hence, by \Cref{cor:momSequAddConvMult} (iii) we have for every $y\in\rset^n$ that $(p^\alpha(y)\cdot s_\alpha)_{\alpha\in\nset_0^n}$ is a moment sequence.
Therefore, by \Cref{thm:posPresChara} we have that $T$ is a positivity preserver.
\end{proof}

From the previous example we see that the degree of $q_\alpha$ can be much larger than $|\alpha|$.
The following result shows that the degree of the polynomial coefficients $q_\alpha$ of a positivity preserver $T$ can even grow arbitrary large.
It is sufficient to show this for the case $n=1$.

\begin{prop}
Let $(r_i)_{i\in\nset_0}\subseteq\nset$ be a sequence of natural numbers.
Then there exist a sequence $(c_i)_{i\in\nset_0}\subseteq (0,\infty)$ of positive numbers and a sequence $(k_i)_{i\in\nset_0}\subseteq\nset$ of natural numbers such that
\[T = \sum_{i\in\nset_0} p_i\cdot\partial_x^i\]
is a positivity preserver with
\[p_{2i}(x) = c_i + x^{2k_i r_i} \quad\text{and}\quad p_{2i+1}(x) = 0\]
for all $i\in \nset_0$.
\end{prop}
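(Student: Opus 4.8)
The plan is to build $T$ by choosing, for each $i$, a one-variable moment sequence of the form $s^{(i)} = (c_i, 0, \ldots, 0, s_{2k_i r_i}^{(i)}, 0, 0, \ldots)$ supported at a single even index $2k_i r_i$ together with a $0$-th entry $c_i$, and then summing the corresponding constant-coefficient pieces $D(s^{(i)})$ after shifting the derivative order by $2i$. Concretely, for each $i$ I will pick an even number $m_i = 2k_i r_i$ (so $m_i$ is a multiple of $r_i$, forcing $\deg p_{2i} = 2k_i r_i \to \infty$ if $r_i\to\infty$, and in any case growing because we are free to take $k_i$ as large as we like) and set $p_{2i}(x) = c_i + x^{m_i}$. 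The key point is that by \Cref{thm:posPresChara}, $T = \sum_i p_i \partial_x^i$ is a positivity preserver iff for every $y\in\rset$ the sequence $(i!\cdot p_i(y))_{i\in\nset_0}$ is a (one-dimensional) moment sequence.

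The main technical step is to arrange that $(i!\,p_i(y))_i = (c_0 + y^{m_0},\, 0,\, 2!(c_1 + y^{m_1}),\, 0,\, 4!(c_2 + y^{m_2}),\, 0,\dots)$ is a moment sequence for every fixed $y$. I would realize it as a countable nonnegative combination of moment sequences and invoke closedness of the moment cone under such combinations (this is just iterated use of \Cref{cor:momSequAddConvMult}(i), together with the fact that coordinate-wise limits of moment sequences are moment sequences — a standard fact; each coordinate here is a finite sum so no genuine limit issue arises pointwise in $\alpha$, but one should check the partial sums stay in the cone). For each fixed $i$, consider the sequence $\sigma^{(i)}$ which in one real variable has $(2i)$-th entry equal to $c_i + y^{m_i}$ (scaled by $(2i)!$), all other entries $0$. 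A sequence supported on a single even coordinate $2i$ with a nonnegative value is a moment sequence: e.g. $(0,\dots,0,a,0,\dots)$ with $a\ge 0$ in slot $2i$ is the even part one gets from a suitable symmetric measure, or more simply it is a nonnegative multiple of the moment sequence of $\tfrac12(\delta_{t} + \delta_{-t})$ composed appropriately — here the cleanest route is: the map sending a measure $\mu$ on $\rset$ to the sequence $(\int x^{k}\,d(\mu\odot\nu_i))$ where $\nu_i$ is chosen to kill all but the desired power. Then summing over $i$ gives $(i!\,p_i(y))_i$ as a sum of moment sequences, hence a moment sequence, for every $y$.

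I expect the real obstacle to be convergence/well-definedness of the operator $T$ as an honest linear map $\rset[x]\to\rset[x]$ and the interchange of "sum over $i$" with "moment sequence for each $y$": since $p_{2i}$ has degree $m_i$ which we are deliberately letting blow up, $T$ does \emph{not} lie in $\fd$, so one cannot appeal to \Cref{lem:degreePreserving}; instead one uses that $T$ applied to a fixed polynomial of degree $d$ only involves the finitely many terms with $i\le d$, so $T$ is well-defined termwise, and positivity is tested degree-by-degree. Thus for a fixed $y$ the relevant truncated sequences $(0!\,p_0(y), 1!\,p_1(y),\dots,(2N)!\,p_{2N}(y),0,0,\dots)$ are each finite sums of single-coordinate moment sequences, hence moment sequences, and a coordinatewise limit of moment sequences is a moment sequence — so $(i!\,p_i(y))_i$ is a moment sequence. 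By \Cref{thm:posPresChara}, $T$ is a positivity preserver. Finally one records the cosmetic points: $c_i>0$ can be taken arbitrary (e.g. $c_i = 1$), $k_i$ arbitrary (so the degrees $2k_i r_i$ grow as fast as desired), and the odd coefficients vanish by construction, which is consistent since a moment sequence may have all odd entries zero.
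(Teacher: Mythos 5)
There is a genuine gap, and it is the heart of the matter. Your strategy rests on the claim that the sequence $\sigma^{(i)}$ with a single nonzero entry $(2i)!\,(c_i+y^{m_i})>0$ at index $2i$ (and zeros elsewhere) is a Hamburger moment sequence. This is false for $i\ge 1$: if $\mu$ were a representing measure, then $\mu(\rset)=\sigma^{(i)}_0=0$ forces $\mu=0$, so all moments vanish, contradicting $\sigma^{(i)}_{2i}>0$. Concretely, the Hankel matrix $\cH_{2i}(\sigma^{(i)})$ has $\sigma^{(i)}_{2i}$ along its anti-diagonal and zeros elsewhere; this matrix has eigenvalues $\pm\sigma^{(i)}_{2i}$ and is indefinite. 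Your fallback — that the truncations $(0!\,p_0(y),\dots,(2N)!\,p_{2N}(y),0,0,\dots)$ are moment sequences — fails for the same reason: e.g.\ $(1,0,a,0,0,\dots)$ with $a>0$ violates Cauchy--Schwarz, $s_2^2\le s_0 s_4$. So neither of the two decompositions you propose stays inside the moment cone, and the closedness of that cone (correct in itself) cannot rescue the argument.

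The underlying issue is that the moment-sequence condition is a \emph{coupled} positivity condition across all indices simultaneously — prescribing the even entries arbitrarily nonnegative and setting the odd entries to zero does \emph{not} produce a moment sequence. That is precisely why the proposition is nontrivial and why the $c_i$ and $k_i$ cannot be chosen freely (e.g.\ $c_i=1$, $k_i=1$ does not work in general). The paper's proof attacks this directly: since $n=1$, it suffices that $\det\cH_i(p(y))>0$ for all $i$ and all $y$, and one expands $\det\cH_{i+1}(p(y))=p_{2i+2}(y)\cdot\det\cH_i(p(y))+q_i(p_0,\dots,p_{2i})$ and chooses, by induction, $c_{i+1}$ large enough to dominate $q_i$ on $[-2,2]$ and $k_{i+1}$ large enough that $x^{2k_{i+1}r_{i+1}}$ dominates $q_i$ outside $[-2,2]$. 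You would need to replace your decomposition step with an argument of this kind (or some other mechanism that actually produces joint Hankel positivity for specifically tuned $c_i,k_i$).
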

\begin{proof}
By \Cref{thm:posPresChara} it is sufficient to show that $p(y) := (p_i(y))_{i\in\nset_0}$ is a moment sequence for all $y\in\rset$.
Since we are in the one-dimensional case this is equivalent to $\cH_i(p(y))\succeq 0$ for all Hankel matrices $\cH_i(p(y)) := (p_{j+l}(y))_{j,l=0}^i$, $i\in\nset_0$, see e.g.\ \cite[Thm.\ 3.8 and Prop.\ 3.11]{schmudMomentBook}.
But for this it is sufficient to ensure that
\begin{equation}\label{eq:hankeldet}
\det \cH_i(p(y))>0
\end{equation}
for all $i\in\nset_0$ and $y\in\rset$.
We construct the sequences $(c_i)_{i\in\nset_0}$ and $(k_i)_{i\in\nset_0}$ from (\ref{eq:hankeldet}) by induction:

$i=0$:
We have $\det \cH_0(p(y)) = p_0(y) = c_0 + y^{2k_0 r_0} \geq 1$ for $c_0=1$ and $k_0=1$.

$i\to i+1$:
Assume we have $(c_j)_{j=0}^i$ and $(k_j)_{j=0}^i$ such that $\det \cH_j(p(y))\geq 1$ holds for all $j=0,\dots,i$ and all $y\in\rset$.
Since
\[\det \cH_{i+1}(p(y)) = p_{2i+2}(y)\cdot\det\cH_i(p(y)) + q_i(p_0,p_2,\dots,p_{2i})\]
holds by developing the determinant $\det\cH_{i+1}(p(y))$ we can choose $c_{i+1}\gg 1$ and $k_{i+1}\gg 1$ such that
\[c_{i+1} \geq 1 + \max_{x\in [-2,2]} |q_i(p_0(x),p_2(x),\dots,p_{2i}(x))|\]
and
\[x^{2k_{i+1} r_{i+1}} \geq 1 + |q_i(p_0(x),p_2(x),\dots,p_{2i}(x)|\]
hold for all $x\in (-\infty,-2]\cup [2,\infty)$ since $\det\cH_i(p(y))\geq 1$.
\end{proof}

The following is a necessary condition that a linear map $T$
is a $K$-positivity preserver.

\begin{lem}\label{lem:KposMomSeq}
Let $q_\alpha\in\rset[x_1,\dots,x_n]$ for $\alpha\in\nset_0^n$.
Suppose that
\[T = \sum_{\alpha\in\nset_0^n} q_\alpha\cdot\partial^\alpha\]
is a $K$-positivity preserver, $K\subseteq\rset^n$ closed.
Then
\[(\alpha!\cdot q_\alpha(y))_{\alpha\in\nset_0^n}\]
is a $(K-y)$-moment sequence for all $y\in K$.
\end{lem}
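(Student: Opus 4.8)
The plan is to localize the positivity-preserving property of $T$ at a point $y \in K$ by testing $T$ against polynomials that are non-negative on $K$ but concentrate their mass near $y$, so that only the ``constant-coefficient part'' $T_y = \sum_\alpha q_\alpha(y)\cdot\partial^\alpha$ survives in the limit. Concretely, fix $y \in K$. I want to show that for every $p \in \pos(\rset^n)$ one has $(T_y\, p)(y) \ge 0$, because by \Cref{thm:posPresChara}-style reasoning (or rather the constant-coefficient characterization behind it) this is exactly the statement that $(\alpha!\cdot q_\alpha(y))_{\alpha\in\nset_0^n}$ is a moment sequence for the shifted set; here we only get the $(K-y)$-version because the test polynomials must be non-negative on $K$, not on all of $\rset^n$.

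The key steps, in order, are as follows. First, recall that to prove $(\alpha!\cdot q_\alpha(y))_\alpha$ is a $(K-y)$-moment sequence it suffices, by the classical characterization of moment sequences via linear functionals, to show that the Riesz functional $L_y(p) := \sum_\alpha q_\alpha(y)\cdot (\partial^\alpha p)(0)$ — equivalently $L_y(p) = (T_y p)(0)$ — is non-negative on all $p \in \rset[x_1,\dots,x_n]$ with $p \ge 0$ on $K - y$. Second, given such a $p$, set $\tilde p(x) := p(x - y)$, so $\tilde p \in \pos(K)$, hence $T\tilde p \in \pos(K)$, and in particular $(T\tilde p)(y) \ge 0$. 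Third, compute $(T\tilde p)(y)$: since $\partial^\alpha$ commutes with translation, $(\partial^\alpha \tilde p)(x) = (\partial^\alpha p)(x-y)$, so $(T\tilde p)(y) = \sum_\alpha q_\alpha(y)\cdot(\partial^\alpha p)(0) = L_y(p) \ge 0$. This is precisely the desired inequality, and since $p \in \pos(K-y)$ was arbitrary, $L_y$ is a $(K-y)$-positive functional, i.e.\ $(\alpha!\cdot q_\alpha(y))_\alpha$ is a $(K-y)$-moment sequence.

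One technical point that needs care: the sum defining $T$ is formally infinite, so I should note that for any fixed polynomial $\tilde p$ only finitely many terms $q_\alpha\cdot\partial^\alpha \tilde p$ are nonzero (those with $\alpha \preceq$ the multidegree of $\tilde p$), so all the manipulations above are finite sums and the evaluations at $y$ are legitimate; no convergence issue arises. A second point is that ``$(K-y)$-moment sequence'' should be read in the sense that there exists a positive Borel measure $\mu_y$ supported on $\overline{K-y} = K - y$ with $\alpha!\cdot q_\alpha(y) = \int z^\alpha\,\diff\mu_y(z)$; passing from the positivity of the functional $L_y$ on $\pos(K-y)$ to the existence of such a $\mu_y$ is the Riesz–Haviland direction, which holds for closed sets (and is exactly the bridge used in \Cref{thm:posPresChara}).

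\textbf{Main obstacle.} The substantive content is entirely in the translation argument — recognizing that the value $(T\tilde p)(y)$ of the translated polynomial isolates the constant-coefficient operator $T_y$ evaluated at the base point, thereby reducing a $K$-positivity statement to a statement about a single Riesz functional. Once that identification is made, the rest is the standard equivalence (moment sequence $\Leftrightarrow$ positive Riesz functional on $\pos(\cdot)$) for closed sets; the only thing to be vigilant about is that we genuinely only obtain the shifted set $K - y$, not $\rset^n$, so the conclusion is a $(K-y)$-moment sequence and nothing stronger.
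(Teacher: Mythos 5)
Your proof is correct and takes essentially the same approach as the paper: both evaluate $(Tf)(y)$ to isolate the constant-coefficient operator $T_y$ and then invoke Haviland's theorem. The only difference is bookkeeping — you translate the test polynomials by $y$ before applying Haviland, so the Riesz functional is $(K-y)$-positive from the start, whereas the paper applies Haviland to the $K$-positive functional $f\mapsto (Tf)(y)$ to obtain a measure $\mu_y$ on $K$ and then substitutes $z = x-y$ in the integral to land on $K-y$.
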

\begin{proof}
Since $T$ is a $K$-positivity preserver we have that
\[L_y:\rset[x_1,\dots,x_n]\to\rset,\quad f\mapsto L_y(f) := (T_y f)(y)\]
is a $K$-moment sequence, i.e., there exists a Radon measure $\mu_y$ with $\supp\mu_y\subseteq K$ such that
\[\sum_{\alpha\in\nset_0^n} q_\alpha(y)\cdot f^{(\alpha)}(y) = (T_y f)(y) = L_y(f) = \int_K f(x)~\diff\mu_y(x)\]
holds for all $f\in \rset[x_1,\dots,x_n]$.
Using the Taylor formula
\[f(x) = \sum_{\alpha\in\nset_0^n} \frac{f^{(\alpha)}(y)}{\alpha!}\cdot (x-y)^\alpha\]
we obtain
\[\sum_{\alpha\in\nset_0^n} q_\alpha(y)\cdot f^{(\alpha)}(y) = \sum_{\alpha\in\nset_0^n} \frac{f^{(\alpha)}(x)}{\alpha!}\cdot \int (x-y)^\alpha~\diff\mu_y(x)\]
for all $f\in\rset[x_1,\dots,x_n]$.
For $\beta\in\nset_0^n$ set $f_\beta(x) := (x-y)^\beta$.
Then $f^{(\alpha)}_{\beta}(y) = \beta!\cdot \delta_{\alpha,\beta}$ and hence 
\[\beta!\cdot q_\beta(y) = \int_K (x-y)^\beta~\diff\mu_y(x) = \int_{K-y} z^\beta~\diff\mu_y(z+y),\]
i.e., $(\alpha!\cdot q_\alpha(y))_{\alpha\in\nset_0^n}$ is a $(K-y)$-moment sequence with representing measure $\nu_y(\,\cdot\,) := \mu_y(\,\cdot\,+y)$.
\end{proof}

\begin{lem}\label{lem:K-yMomSeqMeasure}
Let $T = \sum_{\alpha\in\nset_0^n} q_\alpha\cdot \partial^\alpha$ with $q_\alpha\in\rset[x_1,\dots,x_n]$ for $\alpha\in\nset_0^n$ and let $K\subseteq\rset^n$ be closed.
If $y\in K$ is such that $(\alpha!\cdot q_\alpha(y))_{\alpha\in\nset_0^n}$ is a moment sequence with representing measure $\mu_y$ then
\[(T_y f)(y) = \int f(x+y)~\diff\mu_y(x), \quad f\in\rset[x_1,\dots,x_n].\]
\end{lem}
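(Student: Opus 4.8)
The plan is simply to reverse the Taylor-expansion computation already carried out in the proof of \Cref{lem:KposMomSeq}. The essential observation is that for a polynomial $f$ the Taylor expansion around $y$ is a \emph{finite} sum, so every manipulation with sums and integrals below is elementary and no convergence issue arises.

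First I would fix $y\in K$ as in the hypothesis and recall that, by the very definition of a representing measure, $\int x^\alpha\,\diff\mu_y(x) = \alpha!\cdot q_\alpha(y)$ holds for every $\alpha\in\nset_0^n$; in particular all of these integrals exist and are finite. Next, for an arbitrary $f\in\rset[x_1,\dots,x_n]$ I would write the (terminating) Taylor expansion
\[f(x+y) = \sum_{\alpha\in\nset_0^n} \frac{f^{(\alpha)}(y)}{\alpha!}\cdot x^\alpha,\]
in which only finitely many terms are nonzero since $\deg f<\infty$.

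Then I would integrate term by term against $\mu_y$ — legitimate because the sum is finite — and substitute the moments:
\[\int f(x+y)\,\diff\mu_y(x) = \sum_{\alpha\in\nset_0^n} \frac{f^{(\alpha)}(y)}{\alpha!}\int x^\alpha\,\diff\mu_y(x) = \sum_{\alpha\in\nset_0^n} \frac{f^{(\alpha)}(y)}{\alpha!}\cdot\alpha!\cdot q_\alpha(y) = \sum_{\alpha\in\nset_0^n} q_\alpha(y)\cdot f^{(\alpha)}(y).\]
The right-hand side is exactly $(T_y f)(y)$ by the definition of $T_y$, which finishes the argument.

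I do not expect any real obstacle here. The only two points to keep an eye on are that the phrase ``$(\alpha!\cdot q_\alpha(y))_{\alpha\in\nset_0^n}$ is a moment sequence with representing measure $\mu_y$'' is being used precisely in the form $\int x^\alpha\,\diff\mu_y = \alpha!\cdot q_\alpha(y)$, and that the finiteness of the Taylor sum makes the interchange of summation and integration automatic. This statement is in effect the bookkeeping companion of \Cref{lem:KposMomSeq}: once the coefficient sequence at $y$ has been shown to have a representing measure, it records that $T_y$ acts on polynomials as the corresponding averaging-by-translation operator.
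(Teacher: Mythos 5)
Your proof is correct and coincides with the paper's own argument: both expand $f(x+y)$ by the finite Taylor formula around $y$, integrate term by term against $\mu_y$, and substitute the moments $\int x^\alpha\,\diff\mu_y(x)=\alpha!\,q_\alpha(y)$ to recover $(T_yf)(y)$.
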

\begin{proof}
Since $\mu_y$ is a representing measure of $(\alpha!\cdot q_\alpha(y))_{\alpha\in\nset_0^n}$ we have that
\[\alpha!\cdot q_\alpha(y) = \int x^\alpha~\diff\mu_y(x)\]
holds for all $\alpha\in\nset_0^n$.
From Taylor's formula
\[f(x+y) = \sum_{\alpha\in\nset_0^n} \frac{f^{(\alpha)}(y)}{\alpha!}\cdot x^\alpha\]
we obtain
\begin{multline*}
\int f(x+y)~\diff\mu_y(x) = \sum_{\alpha\in\nset_0^n} \frac{f^{(\alpha)}(y)}{\alpha!}\cdot \int x^\alpha~\diff\mu_y(x)\\
= \sum_{\alpha\in\nset_0^n} f^{(\alpha)}(y)\cdot p_\alpha(y) = (T_y f)(y).\qedhere
\end{multline*}
\end{proof}

Now we put both lemmas together to obtain a complete characterization of $K$-positivity preserver for any closed $K\subseteq\rset^n$.
It is the general version of \Cref{thm:posPresChara} and solves a question open since \cite{netzer10,borcea11}.

\begin{thm}\label{thm:KposDescription}
Let
\[T = \sum_{\alpha\in\nset_0^n} q_\alpha\cdot\partial^\alpha,\]
be with $q_\alpha\in\rset[x_1,\dots,x_n]$ for all $\alpha\in\nset_0^n$ and let $K\subseteq\rset^n$ be closed.
Then the following are equivalent:
\begin{enumerate}[(i)]
\item $T$ is a $K$-positivity preserver.

\item For all $y\in K$ the sequence $(\alpha!\cdot q_\alpha(y))_{\alpha\in\nset_0^n}$ is a $(K-y)$-moment sequence.
\end{enumerate}
If one of the equivalent conditions (i) or (ii) holds and $\mu_y$ is a representing measure of the $(K-y)$-moment sequence $(\alpha!\cdot q_\alpha(y))_{\alpha\in\nset_0^n}$ with $y\in K$ then
\[(T_y f)(y) = \int f(x+y)~\diff\mu_y(x)\]
holds for all $f\in\rset[x_1,\dots,x_n]$.
\end{thm}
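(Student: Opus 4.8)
The plan is to deduce \Cref{thm:KposDescription} from the two preceding lemmas together with the pointwise nature of the positivity condition. The key observation is that $f\in\pos(K)$ and the condition "$T_y f\geq 0$ at the single point $y$ for every $y\in K$" together amount to "$Tf\in\pos(K)$", because $(Tf)(y)=(T_y f)(y)$ by definition of $T_y$ (the coefficients of $T$ are frozen at $y$ before evaluating at $y$). So the whole statement is really about the family of linear functionals $L_y(f):=(T_y f)(y)$, $y\in K$, and when each is nonnegative on $\pos(K)$.

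\textbf{Proof of (i) $\Rightarrow$ (ii).} This is exactly \Cref{lem:KposMomSeq}: if $T$ is a $K$-positivity preserver then for each $y\in K$ and each $f\in\pos(K)$ we have $(Tf)(y)=(T_y f)(y)\geq 0$, so $L_y$ is nonnegative on $\pos(K)$; since $K$ is closed, the solution of the $K$-moment problem (Haviland's theorem) gives a representing Radon measure $\mu_y$ supported in $K$, and the Taylor-expansion computation in \Cref{lem:KposMomSeq} shows $(\alpha!\,q_\alpha(y))_{\alpha}$ is the moment sequence of the translated measure $\nu_y=\mu_y(\,\cdot\,+y)$, which is supported in $K-y$. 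Hence (ii) holds, and moreover the last displayed formula of the theorem already follows here (it is the change of variables $z=x-y$).

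\textbf{Proof of (ii) $\Rightarrow$ (i).} Fix $f\in\pos(K)$ and $y\in K$. By hypothesis $(\alpha!\,q_\alpha(y))_\alpha$ is a $(K-y)$-moment sequence; let $\mu_y$ be a representing measure, so $\supp\mu_y\subseteq K-y$. Apply \Cref{lem:K-yMomSeqMeasure} to get
\[(Tf)(y)=(T_y f)(y)=\int f(x+y)~\diff\mu_y(x).\]
For every $x\in\supp\mu_y\subseteq K-y$ we have $x+y\in K$, so $f(x+y)\geq 0$ since $f\in\pos(K)$; as $\mu_y\geq 0$ the integral is nonnegative, i.e.\ $(Tf)(y)\geq 0$. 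Since $y\in K$ was arbitrary, $Tf\in\pos(K)$, proving (i). The final formula of the theorem is the displayed identity just used, now valid under either equivalent condition.

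\textbf{Main obstacle.} There is no deep obstacle: the work has been done in \Cref{lem:KposMomSeq} and \Cref{lem:K-yMomSeqMeasure}. The only point requiring a little care is the passage, in (i) $\Rightarrow$ (ii), from "$L_y\geq 0$ on $\pos(K)$" to "$L_y$ has a representing measure supported in $K$": this uses the solvability of the $K$-moment problem for closed $K$ (Haviland), which is implicitly invoked in \Cref{lem:KposMomSeq} via the phrase "is a $K$-moment sequence" and must be flagged. A secondary subtlety is that \Cref{lem:K-yMomSeqMeasure} is stated for a measure $\mu_y$ on $\rset^n$ representing $(\alpha!\,q_\alpha(y))_\alpha$ without a support restriction, so in (ii) $\Rightarrow$ (i) one must be sure to pick the representing measure supplied by (ii), which does have $\supp\mu_y\subseteq K-y$; that support property is exactly what makes $f(x+y)\geq 0$ on the support and hence drives the conclusion.
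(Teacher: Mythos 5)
Your proof is correct and follows essentially the same route as the paper's: invoke \Cref{lem:KposMomSeq} for (i) $\Rightarrow$ (ii), and for (ii) $\Rightarrow$ (i) combine \Cref{lem:K-yMomSeqMeasure} with the support constraint $\supp\mu_y\subseteq K-y$ supplied by (ii) to conclude $(Tf)(y)\geq 0$ for all $y\in K$. Your two clarifying remarks — that Haviland is the implicit engine behind \Cref{lem:KposMomSeq}, and that \Cref{lem:K-yMomSeqMeasure} carries no support restriction so the $(K-y)$-support from (ii) is what actually drives the sign argument — are both accurate and match how the paper's proof in fact works, though the paper leaves them tacit.
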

\begin{proof}
(i) $\Rightarrow$ (ii):
This is \Cref{lem:KposMomSeq}.

(ii) $\Rightarrow$ (i): 
Let $y\in K$.
Since $(\alpha!\cdot q_\alpha(y))_{\alpha\in\nset_0^n}$ is a $(K-y)$-moment sequence it has a representing measure $\mu_y$ with $\supp\mu_y\subseteq K-y$.
Let $f\in\pos(K)$.
Then, by \Cref{lem:K-yMomSeqMeasure} we have that
\[(Tf)(y) = (T_y f)(y) = \int_{K-y} f(x+y)~\diff\mu_y(x) = \int_K f(z)~\diff\mu_y(z-y) \geq 0\]
since $f\geq 0$ on $K$ and $\supp\mu_y(\,\cdot\,-y)\subseteq K$.
Since $f\in\pos(K)$ and $y\in K$ were arbitrary we have $T\pos(K)\subseteq\pos(K)$.
Hence, (i) is proved.
\end{proof}

The next lemma contains a topological fact about $K$-positivity preservers.

\begin{lem}\label{lem:closedPosPres}
Let $K\subseteq\rset^n$ be closed.
Then the following hold:
\begin{enumerate}[(i)]
\item The set of $K$-moment sequences is closed in the Fr\'echet topology of $\rset^{\nset_0^n}$.

\item The set of $K$-positivity preservers $T$ such that $$T\rset[x_1,\dots,x_n]_{\leq d}\subseteq\rset[x_1,\dots,x_n]_{\leq d}$$  for  $d\in\nset_0$ is closed in the Fr\'echet topology of $\cset[x_1,\dots,x_n]^\sharp [[\partial_1,\dots,\partial_n]]$.
\end{enumerate}
\end{lem}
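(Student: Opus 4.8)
The plan is to prove the two parts essentially independently, with part (ii) reducing to part (i) via the pointwise characterization in \Cref{thm:KposDescription}.

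For part (i), I would argue as follows. Fix a sequence $(s^{(k)})_{k\in\nset}$ of $K$-moment sequences with $s^{(k)}\to s$ coordinate-wise in $\rset^{\nset_0^n}$, and let $\mu_k$ be a representing measure of $s^{(k)}$ supported in $K$. The goal is to produce a representing measure of $s$ supported in $K$. The standard tool here is a weak-$*$ compactness / Haviland-type argument: one shows that the limit functional $L_s(p):=\sum_\alpha s_\alpha\cdot(\text{coeff of }x^\alpha\text{ in }p)$ is non-negative on $\pos(K)$, because $L_s(p)=\lim_k L_{s^{(k)}}(p)=\lim_k\int_K p\,\diff\mu_k\geq 0$ for every $p\in\pos(K)$, and then invokes Haviland's theorem to conclude that $s$ itself is a $K$-moment sequence. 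I would cite Haviland's theorem from \cite{schmudMomentBook} (the same reference already used for the Hankel criterion). This gives (i) cleanly, and in fact shows the set of $K$-moment sequences is not merely closed but a closed convex cone.

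For part (ii), recall from \Cref{lem:degreePreserving} that the condition $T\rset[x_1,\dots,x_n]_{\leq d}\subseteq\rset[x_1,\dots,x_n]_{\leq d}$ for all $d$ is exactly $\deg q_\alpha\leq|\alpha|$, i.e.\ membership in $\cset[x_1,\dots,x_n]^\sharp[[\partial_1,\dots,\partial_n]]$ (restricted to real coefficients), so the ambient Fr\'echet space in the statement is the right one and coordinate-wise convergence is the relevant topology by \Cref{lem:polySharpPartialFrechetSpace}. Now let $T^{(k)}=\sum_\alpha q_\alpha^{(k)}\cdot\partial^\alpha$ be $K$-positivity preservers of this degree-bounded form with $T^{(k)}\to T=\sum_\alpha q_\alpha\cdot\partial^\alpha$ coordinate-wise; this means $q_\alpha^{(k)}\to q_\alpha$ in the (finite-dimensional, hence Euclidean) space $\rset[x_1,\dots,x_n]_{\leq|\alpha|}$ for every $\alpha$, and in particular $q_\alpha^{(k)}(y)\to q_\alpha(y)$ for every fixed $y\in\rset^n$. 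Fix $y\in K$. By \Cref{thm:KposDescription}, each sequence $(\alpha!\cdot q_\alpha^{(k)}(y))_{\alpha\in\nset_0^n}$ is a $(K-y)$-moment sequence, and these converge coordinate-wise as $k\to\infty$ to $(\alpha!\cdot q_\alpha(y))_{\alpha\in\nset_0^n}$. Since $K-y$ is closed, part (i) applied to the closed set $K-y$ shows the limit is again a $(K-y)$-moment sequence. As $y\in K$ was arbitrary, the other direction of \Cref{thm:KposDescription} yields that $T$ is a $K$-positivity preserver; and $T$ retains the degree bound because it is a coordinate-wise limit within $\cset[x_1,\dots,x_n]^\sharp[[\partial_1,\dots,\partial_n]]$, which is closed under such limits. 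This proves (ii).

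The main obstacle is the measure-theoretic content of part (i): passing from a coordinate-wise convergent sequence of moment sequences to an actual representing measure for the limit. One must resist the temptation to extract a weak-$*$ limit of the $\mu_k$ directly, since without a uniform tightness bound the mass can escape to infinity and one only gets a sub-probability (or the wrong moments). Routing through the Haviland theorem side-steps this entirely: it is the clean way to see that being a $K$-moment sequence is a closed condition, precisely because it is equivalent to the closed condition ``$L_s\geq 0$ on $\pos(K)$''. Everything after that — the reduction of (ii) to (i), the bookkeeping with the degree bound, the finite-dimensionality of each $\rset[x_1,\dots,x_n]_{\leq|\alpha|}$ — is routine.
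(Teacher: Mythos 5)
Your proof is correct and follows essentially the same route as the paper: (i) via Haviland's theorem applied to the limit functional, and (ii) by pointwise evaluation at each $y\in K$, using \Cref{thm:KposDescription} to reduce to (i) applied to the closed set $K-y$. One small improvement over the paper's own write-up: you correctly cite \Cref{thm:KposDescription} in part (ii), whereas the paper twice cites \Cref{thm:posPresChara}, which is stated only for $K=\rset^n$; for general closed $K$ the reference should indeed be \Cref{thm:KposDescription}.
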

\begin{proof}
(i): The convergence in the Fr\'echet topology of $\rset^{\nset_0^n}$ is the coordinate-wise convergence of sequences.
Hence, let $s_k = (s_{k,\alpha})_{\alpha\in\nset_0^n}$ be $K$-moment sequences for all $k\in\nset$ such that $s_k\to s_0$ in $\rset^{\nset_0^n}$, i.e., $s_{k,\alpha}\to s_{0,\alpha}$ for all $\alpha\in\nset_0^n$.
We have to show that $s_0$ is a $K$-moment sequence.

Let $p\in\rset[x_1,\dots,x_n]$ be a polynomial such that $p(x)\geq 0$ for all $x\in K$.
Then
\[L_{s_0}(p) = \lim_{k\to\infty} L_{s_k}(p) \geq 0\]
which implies that $s_0$ is a $K$-moment sequence by Haviland's Theorem \cite{havila36}.

(ii): Let $T_k = \sum_{\alpha\in\nset_0^n} q_\alpha\cdot\partial^\alpha$ be $K$-positivity preservers for all $k\in\nset$ with
\begin{equation}\label{eq:proofTkConvergence}
T_k\to T_0\in\cset[x_1,\dots,x_n]^\sharp [[\partial_1,\dots,\partial_n]].
\end{equation}
Then for all $y\in K$ we have that $s_k(y) := (\alpha!\cdot q_{k,\alpha}(y))_{\alpha\in\nset_0^n}$ is a $(K-y)$-moment sequence by \Cref{thm:posPresChara}.
By (\ref{eq:proofTkConvergence}) we have $s_{k,\alpha}(y)\to s_{0,\alpha}$ for all $\alpha\in\nset_0^n$ and by (i) we therefore have that $s_0$ is a $(K-y)$-moment sequence.
Since $y\in K$ was arbitrary, it follows from   \Cref{thm:posPresChara} that $T_0$ is a $K$-positivity preserver.
\end{proof}

\Cref{thm:KposDescription} gives a characterization of general $K$-positivity preservers by a necessary and sufficient condition.
Now we turn to a deeper study of $K$-positivity preservers with \textit{constant} coefficients.
For this the following definition is crucial.

\begin{dfn}\label{dfn:ksharp}
For $K\subseteq\rset^n$ we define
\[K^\sharp := \{x\in\rset^n \,|\, x+K\subseteq K\}.\]
\end{dfn}

Some simple properties of $K^\sharp$ are collected in the following lemma.
All statements follow immediately from \Cref{dfn:ksharp}.

\begin{lem}\label{lem:KsharpProps}
Let $K\subseteq\rset^n$ be closed.
Then the following hold:
\begin{enumerate}[(i)]
\item $0\in K^\sharp \neq \emptyset$.

\item $(y+K)^\sharp = K^\sharp$ for all $y\in\rset^n$.

\item $x+y\in K^\sharp$ for all $x,y\in K^\sharp$.
\end{enumerate}
\end{lem}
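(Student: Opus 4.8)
The plan is to verify each of the three items directly from Definition \ref{dfn:ksharp}, which characterizes $K^\sharp$ as the set of $x\in\rset^n$ with $x+K\subseteq K$. For (i), I would note that $0+K=K\subseteq K$, so $0\in K^\sharp$; in particular $K^\sharp\neq\emptyset$. Here closedness of $K$ is not even needed, but it is harmless to assume it. For (ii), I would show the two inclusions $x+K\subseteq K \iff x+(y+K)\subseteq y+K$. Indeed, adding $y$ to both sides of $x+K\subseteq K$ gives $y+x+K\subseteq y+K$, i.e.\ $x+(y+K)\subseteq y+K$; conversely, subtracting $y$ from $x+(y+K)\subseteq y+K$ recovers $x+K\subseteq K$. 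Thus $x\in K^\sharp \iff x\in (y+K)^\sharp$, which is exactly $(y+K)^\sharp=K^\sharp$. For (iii), given $x,y\in K^\sharp$, I would compute $(x+y)+K = x+(y+K)\subseteq x+K\subseteq K$, where the first inclusion uses $y+K\subseteq K$ (so applying the translation by $x$ preserves the inclusion) and the second uses $x\in K^\sharp$; hence $x+y\in K^\sharp$.

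None of the steps presents a genuine obstacle; the only thing to be mildly careful about is that set translation is monotone with respect to inclusion (if $S\subseteq S'$ then $v+S\subseteq v+S'$), which is what makes the chained inclusions in (ii) and (iii) legitimate. Since the lemma statement itself already remarks that all statements follow immediately from the definition, a terse proof suffices.

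\begin{proof}
(i): Since $0+K=K\subseteq K$, we have $0\in K^\sharp$, so in particular $K^\sharp\neq\emptyset$.

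(ii): Let $y\in\rset^n$. For $x\in\rset^n$ we have $x+(y+K)\subseteq y+K$ if and only if $x+K\subseteq K$ (add or subtract $y$ on both sides, using that translation preserves inclusions). Hence $x\in(y+K)^\sharp$ if and only if $x\in K^\sharp$, i.e.\ $(y+K)^\sharp=K^\sharp$.

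(iii): Let $x,y\in K^\sharp$. Then $(x+y)+K=x+(y+K)\subseteq x+K\subseteq K$, where the first inclusion uses $y+K\subseteq K$ and monotonicity of translation, and the second uses $x\in K^\sharp$. Therefore $x+y\in K^\sharp$.
\end{proof}
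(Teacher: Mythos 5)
Your proof is correct and follows the same direct verification from Definition~\ref{dfn:ksharp} that the paper has in mind; the paper itself states only that all three items ``follow immediately from'' the definition and gives no further argument. Your write-up simply makes those immediate deductions explicit, which is fine.
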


Depending on $K$ the set $K^\sharp$ can be very diverse.
Though $K^\sharp$ is invariant under addition by \Cref{lem:KsharpProps} (iii) it is in general not a cone, as the following examples show.

\begin{exms}\label{exm:Ksharp}
\begin{enumerate}[\bfseries\; (a)]
\item If $K$ is compact then  $K^\sharp = \{0\}$.

\item Let $c\in\rset$ and let $K = [-1,1]\times [c,\infty)$.
Then
\[K^\sharp = \{0\}\times [0,\infty).\]

\item For a closed convex cone $K\subseteq\rset^n$ with apex $0$ we have $K^\sharp = K$.

\item Let $r\in [0,1/2]$ and let $K = \bigcup_{z\in\zset^n} B_{r}(z)$. Then $K^\sharp = \zset^n$.
\end{enumerate}
\end{exms}

\begin{lem}\label{lem:sKsharpMomSeq}
Let $K\subseteq\rset^n$ be closed.
If $s$ is a $K^\sharp$-moment sequence then $D(s)$ is a $K$-positivity preserver.
\end{lem}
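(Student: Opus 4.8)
The plan is to deduce this directly from the general characterization \Cref{thm:KposDescription}. First I would record that $D(s)$ has \emph{constant} coefficients: its canonical representation is $D(s)=\sum_{\alpha\in\nset_0^n} q_\alpha\cdot\partial^\alpha$ with $q_\alpha = s_\alpha/\alpha!$, so for \emph{every} $y\in\rset^n$ the sequence $(\alpha!\cdot q_\alpha(y))_{\alpha\in\nset_0^n}$ occurring in \Cref{thm:KposDescription}(ii) is exactly $s$, independently of $y$. Hence, by the equivalence (i) $\Leftrightarrow$ (ii) of \Cref{thm:KposDescription}, it suffices to prove that $s$ is a $(K-y)$-moment sequence for every $y\in K$.

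The key (and essentially only) geometric observation is the inclusion
\[
K^\sharp \subseteq K-y \qquad\text{for all } y\in K.
\]
Indeed, if $x\in K^\sharp$ then $x+K\subseteq K$ by \Cref{dfn:ksharp}, so in particular $x+y\in K$, i.e.\ $x\in K-y$. Now let $\mu$ be a representing measure of the $K^\sharp$-moment sequence $s$, so that $\supp\mu\subseteq K^\sharp$ and $s_\alpha=\int x^\alpha\,\diff\mu(x)$ for all $\alpha\in\nset_0^n$. Then $\supp\mu\subseteq K^\sharp\subseteq K-y$, and since $K-y$ is closed (as $K$ is closed), the same measure $\mu$ witnesses that $s$ is a $(K-y)$-moment sequence, for every $y\in K$.

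Combining these two steps via the implication (ii) $\Rightarrow$ (i) of \Cref{thm:KposDescription} then gives $D(s)\pos(K)\subseteq\pos(K)$, i.e.\ $D(s)$ is a $K$-positivity preserver; one may also read off from the last part of that theorem that $(D(s)f)(y)=\int f(x+y)\,\diff\mu(x)$. I do not expect any real obstacle here: the whole argument hinges on spotting $K^\sharp\subseteq K-y$, and the only point deserving a line of care is that the representing measure of $s$, a priori only known to be supported on $K^\sharp$, is automatically a Radon measure supported in the closed set $K-y$ — which is immediate from the set inclusion.
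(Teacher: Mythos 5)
Your proof is correct and takes essentially the same approach as the paper: both hinge on the observation that $\supp\mu\subseteq K^\sharp$ forces $x+y\in K$ for all $x\in\supp\mu$ and $y\in K$ (equivalently, $K^\sharp\subseteq K-y$). The only cosmetic difference is that you invoke \Cref{thm:KposDescription} as a ready-made criterion, whereas the paper argues directly via the integral $(D(s)f)(y)=\int f(x+y)\,\diff\mu(x)\ge 0$.
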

\begin{proof}
Since $s$ is a $K^\sharp$-moment sequence it has a representing measure $\mu$.
Let $f\in\pos(K)$ and $y\in K$.
Then
\[(D(s)f)(y) = \int f(x+y)~\diff\mu(x) \geq 0\]
holds since $\supp\mu \subseteq K^\sharp$ and hence $x+y\in K$ holds for all $x\in K^\sharp$. This shows that $D(s)$ is a $K$-positivity preserver.
\end{proof}

\begin{dfn}
Let $K\subseteq\rset^n$ be closed.
A $K$-positivity preserver
\[T = \sum_{\alpha\in\nset_0^n} q_\alpha\cdot\partial^\alpha\]
is called \emph{determinate} if all moment sequences $(\alpha!\cdot q_\alpha(y))_{\alpha\in\nset_0^n}$ are determinate for $y\in K$, i.e., have a single representing measure on $\rset^n$.
\end{dfn}

Note, we require that all $(\alpha!\cdot q_\alpha(y))_{\alpha\in\nset_0^n}$ are determinate, i.e., on all $\rset^n$ and not just on $K$.
There are moment sequences which are e.g.\ determinate on $[0,\infty)$ (Stieltjes) but are not determinate on $\rset$ (Hamburger).

\begin{thm}\label{thm:determinateKsharp}
Let $K\subseteq\rset^n$ be closed and let $s=(s_\alpha)_{\alpha\in\nset_0^n}$ be a real sequence.
Then the following statements are equivalent:
\begin{enumerate}[(i)]
\item $D(s)$ is a determinate $K$-positivity preserver.

\item $s$ is a determinate  $K^\sharp$-moment sequence.
\end{enumerate}
\end{thm}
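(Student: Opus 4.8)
The plan is to prove the two implications separately, using \Cref{thm:KposDescription} and \Cref{lem:sKsharpMomSeq} to handle the "positivity preserver" part, and to track determinacy carefully on top of that.

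For the direction (ii) $\Rightarrow$ (i): Assume $s$ is a determinate $K^\sharp$-moment sequence, so it has a unique representing measure $\mu$ on $\rset^n$, and $\supp\mu\subseteq K^\sharp$. By \Cref{lem:sKsharpMomSeq}, $D(s)$ is already a $K$-positivity preserver, so it only remains to check determinacy. Recall that $D(s) = \sum_\alpha \frac{s_\alpha}{\alpha!}\partial^\alpha$, so in the notation of \Cref{thm:KposDescription} we have $q_\alpha = \frac{s_\alpha}{\alpha!}$ (constant coefficients), hence $\alpha!\cdot q_\alpha(y) = s_\alpha$ for every $y\in K$. Thus the moment sequence associated to $D(s)$ at each $y\in K$ is exactly $s$ itself, which is determinate by hypothesis. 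Therefore $D(s)$ is a determinate $K$-positivity preserver.

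For the direction (i) $\Rightarrow$ (ii): Assume $D(s)$ is a determinate $K$-positivity preserver. By \Cref{thm:KposDescription}, for each $y\in K$ the sequence $(\alpha!\cdot q_\alpha(y))_{\alpha\in\nset_0^n} = s$ is a $(K-y)$-moment sequence; in particular $s$ is a moment sequence (on $\rset^n$), and by the determinacy assumption it has a \emph{unique} representing measure $\mu$ on $\rset^n$. The key point is to show $\supp\mu\subseteq K^\sharp$. Fix $y\in K$. Since $s$ is a $(K-y)$-moment sequence, it has some representing measure supported in $K-y$; but $s$ is determinate, so that measure must be $\mu$, giving $\supp\mu\subseteq K-y$. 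As $y\in K$ was arbitrary, $\supp\mu\subseteq\bigcap_{y\in K}(K-y)$. Finally, one checks $\bigcap_{y\in K}(K-y) = K^\sharp$: indeed $x\in\bigcap_{y\in K}(K-y)$ means $x+y\in K$ for all $y\in K$, i.e.\ $x+K\subseteq K$, i.e.\ $x\in K^\sharp$ by \Cref{dfn:ksharp}. Hence $\mu$ is a representing measure of $s$ supported in $K^\sharp$, so $s$ is a $K^\sharp$-moment sequence, and it is determinate since it is already determinate on all of $\rset^n$.

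The main subtlety — and the place where the determinacy hypothesis is genuinely used — is the step showing $\supp\mu\subseteq K-y$ for every $y\in K$ in the direction (i) $\Rightarrow$ (ii): without determinacy one would only get, for each $y$, \emph{some} representing measure in $K-y$, with no way to conclude these are all the same measure, and hence no way to intersect the supports. The identity $\bigcap_{y\in K}(K-y)=K^\sharp$ is elementary and essentially a restatement of \Cref{dfn:ksharp}. I would also remark that this result shows determinacy is the natural hypothesis under which the correspondence $s\leftrightarrow D(s)$ between $K^\sharp$-moment sequences and constant-coefficient $K$-positivity preservers becomes a clean bijection.
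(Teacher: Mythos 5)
Your proof is correct and follows essentially the same route as the paper's: direction (i) $\Rightarrow$ (ii) uses \Cref{thm:KposDescription} to get that $s$ is a $(K-y)$-moment sequence for each $y\in K$, then determinacy forces the unique representing measure $\mu$ to satisfy $\supp\mu\subseteq\bigcap_{y\in K}(K-y)=K^\sharp$; direction (ii) $\Rightarrow$ (i) invokes \Cref{lem:sKsharpMomSeq}. Your write-up is in fact a bit more careful than the paper's in the (ii) $\Rightarrow$ (i) direction, where you explicitly note that constant coefficients give $\alpha!\,q_\alpha(y)=s_\alpha$ for every $y$, so determinacy of $D(s)$ reduces to determinacy of $s$ itself --- the paper treats this as obvious and simply cites a lemma (and in fact appears to miscite \Cref{lem:KsharpProps} where \Cref{lem:sKsharpMomSeq} is meant). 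Your closing remark correctly isolates where the determinacy hypothesis does real work.
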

\begin{proof}
(i) $\Rightarrow$ (ii):
By \Cref{thm:KposDescription},  $s$ is a $(K-y)$-moment sequence for all $y\in K$.
Since $D(s)$ is determinate by (i) the unique representing measure $\mu$ of $s$ satisfies $\supp\mu \subseteq K-y$ for all $y\in K$, i.e., $y+\supp\mu \subseteq K$ holds for all $y\in K$ and therefore $\supp\mu\subseteq K^\sharp$.

(ii) $\Rightarrow$ (i): This is \Cref{lem:KsharpProps}.
\end{proof}

The following corollary shows that for compact sets $K$ there are only trivial $K$-positivity preservers with constant coefficients.

\begin{cor}\label{cor:compactK}
Let $K\subseteq\rset^n$ be compact.
Then every $K$-positivity preserver $T = \sum_{\alpha\in\nset_0^n} q_\alpha \cdot\partial^\alpha$ with constant coefficients $q_\alpha \in \rset$, $\alpha\in \nset_0^n$, is of the form $T = c\cdot \one$ for some $c\in [0,\infty)$.
\end{cor}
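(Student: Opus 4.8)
The plan is to reduce everything to the constant-coefficient theory of \Cref{thm:determinateKsharp}. Since $T$ has constant coefficients we may write $T = D(s)$ with the real sequence $s = (s_\alpha)_{\alpha\in\nset_0^n}$ given by $s_\alpha := \alpha!\cdot q_\alpha$. By \Cref{thm:KposDescription}, the hypothesis that $T$ is a $K$-positivity preserver means precisely that $s$ is a $(K-y)$-moment sequence for every $y\in K$. Fixing any $y_0\in K$ (possible since $K\neq\emptyset$), the set $K-y_0$ is compact, so $s$ admits a representing measure $\mu$ with compact support.

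The key observation is that a moment sequence possessing a representing measure with compact support is determinate: the moments of such a $\mu$ grow at most geometrically in $|\alpha|$, so $\mu$ satisfies the (multivariate) Carleman condition and is therefore the unique representing measure of $s$ on all of $\rset^n$ (see e.g.\ \cite{schmudMomentBook}). Hence $D(s)$ is a \emph{determinate} $K$-positivity preserver, and \Cref{thm:determinateKsharp} applies: $s$ is a determinate $K^\sharp$-moment sequence, i.e.\ $\supp\mu\subseteq K^\sharp$. By \Cref{exm:Ksharp}(a) we have $K^\sharp = \{0\}$ because $K$ is compact, so $\mu = c\,\delta_0$ for some $c\in[0,\infty)$. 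Comparing moments gives $s_0 = c$ and $s_\alpha = 0$ for all $\alpha\neq 0$, hence $q_0 = c$ and $q_\alpha = 0$ otherwise, which is exactly $T = c\cdot\one$.

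I expect the only subtle point to be the determinacy of compactly supported moment sequences in several variables; everything else is bookkeeping via the already-established characterizations. If one prefers to avoid invoking \Cref{thm:determinateKsharp}, the same conclusion follows directly: by determinacy the unique $\mu$ must satisfy $\supp\mu\subseteq K-y$ for every $y\in K$, hence $\supp\mu\subseteq\bigcap_{y\in K}(K-y)$, and $z\in\bigcap_{y\in K}(K-y)$ is equivalent to $z+K\subseteq K$, i.e.\ $z\in K^\sharp=\{0\}$; thus $\mu=c\,\delta_0$ and one concludes as above.
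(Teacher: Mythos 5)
Your proof follows the paper's argument essentially line for line: reduce to a $(K-y)$-moment sequence via \Cref{thm:KposDescription}, observe that compact support forces determinacy, invoke \Cref{thm:determinateKsharp} together with $K^\sharp=\{0\}$, and conclude $\mu=c\,\delta_0$. The only differences are cosmetic — you justify determinacy explicitly via the Carleman condition, and you sketch the direct intersection argument $\supp\mu\subseteq\bigcap_{y\in K}(K-y)=K^\sharp$ as an alternative to citing \Cref{thm:determinateKsharp}, but both are minor elaborations of the same route.
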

\begin{proof}
Let $y\in K$.
Since $T$ is a $K$-positivity preserver it follows from \Cref{thm:KposDescription} that $s:= (\alpha!\cdot q_{\alpha})_{\alpha\in\nset_0^n}$ is a $(K-y)$-moment sequence. Because $K$ is compact so is $(K-y)$.
Hence, $s$ is determinate and $T$ is a determinate positivity preserver.
Then \Cref{thm:determinateKsharp} applies and we have that $s$ is a $K^\sharp$-moment sequence.
But $K^\sharp = \{0\}$ because $K$ is compact. 
Hence, $c\cdot\delta_0$ with $c\geq 0$ are the only possible representing measures of $s$.
This implies that $T=c\cdot\one$.
\end{proof}

In order to formulate the next corollary we recall the Carleman condition.
For an $n$-sequence $s = (s_\alpha )_{\alpha\in \nset_0^n}$ the marginal sequences are the $1$-sequences
\[s^{[1]}:=(s_{(k,0,\dots,0))})_{k \in \nset_0},\ s^{[2]}:=(s_{(0,k,\dots,0)})_{k \in \nset_0},\ \dots,\ s^{[n]}:=(s_{(0,\dots,0,k)})_{k \in \nset_0}.\]

\begin{dfn}
Let $s$ be a positive semidefinite $n$-sequence.
We shall say that $s$ satisfies the \emph{multivariate Carleman condition} if
\[\sum_{k=1}^\infty \left(s_{2k}^{[j]}\right)^{-\frac{1}{2k}} =+\infty\]
holds for all $j=1,\dots,n$.
\end{dfn}

\begin{cor}\label{cor:KwithCarleman}
Let $s$ be a positive semi-definite real sequence which fulfills the multivariate Carleman condition, $K\subseteq\rset^n$ closed.
The following are equivalent:
\begin{enumerate}[(i)]
\item $D(s)$ is a $K$-positivity preserver.

\item $s$ is a $K^\sharp$-moment sequence.
\end{enumerate}
\end{cor}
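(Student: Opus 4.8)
The plan is to deduce this from \Cref{thm:determinateKsharp} once we know that the Carleman condition forces determinacy. First I would recall that a positive semi-definite $n$-sequence satisfying the multivariate Carleman condition is determinate on $\rset^n$; this is a classical fact (e.g.\ \cite[Thm.\ 14.19]{schmudMomentBook}, building on Petersen's reduction of the multivariate Carleman condition to the one-dimensional Carleman theorem). In particular, if $s$ satisfies the multivariate Carleman condition and is a $K^\sharp$-moment sequence (or a $(K-y)$-moment sequence for some $y$), then its representing measure on $\rset^n$ is unique. The key point is that the Carleman condition is intrinsic to the sequence $s$ and does not refer to any support constraint, so it yields determinacy regardless of whether we view $s$ as a moment sequence on $\rset^n$, on $K^\sharp$, or on any $K-y$.

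For the implication (ii) $\Rightarrow$ (i): if $s$ is a $K^\sharp$-moment sequence, then by \Cref{lem:sKsharpMomSeq} $D(s)$ is a $K$-positivity preserver, with no determinacy needed. For (i) $\Rightarrow$ (ii): suppose $D(s)$ is a $K$-positivity preserver. By \Cref{thm:KposDescription}, $s$ is a $(K-y)$-moment sequence for every $y\in K$; in particular $s$ is a moment sequence, and since $s$ satisfies the multivariate Carleman condition it is determinate on $\rset^n$. Hence $D(s)$ is a \emph{determinate} $K$-positivity preserver in the sense of the definition preceding \Cref{thm:determinateKsharp}, because the single sequence $s = (\alpha!\cdot q_\alpha(y))_{\alpha\in\nset_0^n}$ (which here is independent of $y$, as the coefficients are constant) is determinate. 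Now \Cref{thm:determinateKsharp} applies and gives that $s$ is a $K^\sharp$-moment sequence, which is (ii).

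The only genuine content beyond bookkeeping is the citation that the multivariate Carleman condition implies determinacy on $\rset^n$; everything else is an immediate combination of \Cref{lem:sKsharpMomSeq} and \Cref{thm:determinateKsharp}. I do not expect any real obstacle here — the corollary is essentially the observation that the Carleman condition is a convenient checkable sufficient condition for the determinacy hypothesis of \Cref{thm:determinateKsharp}. One should just be slightly careful to state the Carleman-implies-determinacy result for $n$-sequences (not merely $1$-sequences) and to note that it is applied to the fixed sequence $s$, so the determinacy of the family $\{(\alpha!\cdot q_\alpha(y))_{\alpha}\}_{y\in K}$ required by the definition is automatic.
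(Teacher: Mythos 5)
Your proposal is correct and follows the same route as the paper: the multivariate Carleman condition (for a positive semi-definite sequence) is invoked, via \cite[Thm.~14.19]{schmudMomentBook}, to conclude that $s$ is a determinate moment sequence, after which \Cref{thm:determinateKsharp} gives the equivalence. The only cosmetic difference is that you separate the two implications and note that (ii) $\Rightarrow$ (i) needs only \Cref{lem:sKsharpMomSeq}, whereas the paper simply cites \Cref{thm:determinateKsharp} for both directions at once.
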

\begin{proof}
Since $s$ fulfills the Carleman condition we have that $s$ is a determinate moment sequence \cite[Theorem 14.19]{schmudMomentBook}.
Hence, the equivalence (i) $\Leftrightarrow$ (ii) follows from \Cref{thm:determinateKsharp}.
\end{proof}

In \Cref{lem:sKsharpMomSeq} we have seen that if $s$ is a $K^\sharp$-moment sequence then $D(s)$ is a $K$-positivity preserver.
Conversely, by \Cref{cor:KwithCarleman} we have that if $D(s)$ is a determinate $K$-positivity preserver then $s$ is a determinate $K^\sharp$-moment sequence.
For special cases the reverse holds also without the determinacy assumption.

\begin{prop}\label{prop:KposPresCone}
Suppose that $K$ is a closed convex cone in $\rset^n$ with apex $0$, i.e., $K^\sharp = K$.
Then the following are equivalent:
\begin{enumerate}[(i)]
\item $D(s)$ is a $K$-positivity preserver.

\item $s$ is a $K^\sharp$-moment sequence.
\end{enumerate}
\end{prop}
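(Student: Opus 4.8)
The plan is to reduce the nontrivial implication to a single application of \Cref{thm:KposDescription} at the point $y=0$, which is available precisely because $K$ is a cone with apex $0$. The direction (ii) $\Rightarrow$ (i) needs no new argument: it is exactly \Cref{lem:sKsharpMomSeq}, valid for any closed $K\subseteq\rset^n$.

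For (i) $\Rightarrow$ (ii), I would start from the observation that $D(s)=\sum_{\alpha\in\nset_0^n}\frac{s_\alpha}{\alpha!}\,\partial^\alpha$ has \emph{constant} coefficients $q_\alpha=\frac{s_\alpha}{\alpha!}$, so that $\alpha!\cdot q_\alpha(y)=s_\alpha$ for every $y\in\rset^n$. Assuming $D(s)$ is a $K$-positivity preserver, \Cref{thm:KposDescription} gives that $s=(\alpha!\cdot q_\alpha(y))_{\alpha\in\nset_0^n}$ is a $(K-y)$-moment sequence for every $y\in K$. Since $K$ is a convex cone with apex $0$ we have $0\in K$, so specializing to $y=0$ yields that $s$ is a $(K-0)=K$-moment sequence. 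Finally $K^\sharp=K$ (recorded in the statement, cf.\ also \Cref{exm:Ksharp}\,(c): $x,k\in K$ gives $x+k\in K$ by conicity, so $K\subseteq K^\sharp$, while $K^\sharp\subseteq K$ holds whenever $0\in K$), hence $s$ is a $K^\sharp$-moment sequence, which is (ii).

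I do not expect a genuine obstacle; the only conceptual point worth flagging in the writeup is why the determinacy hypothesis of \Cref{thm:determinateKsharp} can be dispensed with here. In general \Cref{thm:KposDescription} only produces, for each $y\in K$ separately, a representing measure $\mu_y$ supported in $K-y$, and to conclude (ii) one would need a \emph{single} representing measure supported in $\bigcap_{y\in K}(K-y)=K^\sharp$; reconciling the various $\mu_y$ is exactly what forced the determinacy assumption earlier. The cone hypothesis sidesteps this entirely, because $0$ itself belongs to $K$, so the measure $\mu_0$ obtained at $y=0$ is already supported in $K=K^\sharp$ and no comparison with the other $\mu_y$ is required.
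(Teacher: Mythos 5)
Your proposal is correct and matches the paper's proof exactly: for (i) $\Rightarrow$ (ii) the paper also applies \Cref{thm:KposDescription} at $y=0$ to conclude that $s$ is a $K$-moment sequence (hence a $K^\sharp$-moment sequence since $K^\sharp=K$), and for (ii) $\Rightarrow$ (i) it also cites \Cref{lem:sKsharpMomSeq}. Your closing remark explaining why determinacy is unnecessary here is a nice addition but not part of the argument itself.
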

\begin{proof}
Since $K$ is a closed convex cone with apex $0$ we have that $K^\sharp = K$ holds by Example \ref{exm:Ksharp} (c).

(i) $\Rightarrow$ (ii):
Let $y = 0$.
Then by \Cref{thm:KposDescription} we have that $s$ is a $K$-moment sequence.

(ii) $\Rightarrow$ (i): That is \Cref{lem:sKsharpMomSeq}.
\end{proof}

\begin{thm}\label{prop:KposPresStripe}
Let $C\subseteq\rset^n$ be compact and set $K := C\times [0,\infty)$, i.e., $K^\sharp = \{0\}\times [0,\infty)$.
Then the following are equivalent:
\begin{enumerate}[(i)]
\item $D(s)$ is a $K$-positivity preserver.

\item $s$ is a $K^\sharp$-moment sequence.
\end{enumerate}
\end{thm}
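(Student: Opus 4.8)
The implication (ii) $\Rightarrow$ (i) holds for every closed $K$ by \Cref{lem:sKsharpMomSeq}, so only (i) $\Rightarrow$ (ii) requires an argument. The plan is a \emph{partial determinacy} argument: $s$ itself need not be a determinate moment sequence on all of $\rset^n$ --- its marginal in the $[0,\infty)$-direction may be an indeterminate Hamburger sequence, so \Cref{thm:determinateKsharp} does not apply directly --- but its marginal in the coordinates occupied by the compact factor $C$ \emph{is} determinate, and that alone forces every representing measure of $s$ supplied by \Cref{thm:KposDescription} to live on $K^\sharp$.

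Write the coordinates of the ambient Euclidean space as $(u,v)$, where $u$ ranges over the factor containing $C$ and $v$ over the half-line direction, so that $K=C\times[0,\infty)$ and $K^\sharp=C^\sharp\times[0,\infty)=\{0\}\times[0,\infty)$, using $C^\sharp=\{0\}$ from Example \ref{exm:Ksharp} (a). Index $s=(s_{(\beta,k)})_{\beta,k}$ accordingly, let $\pi$ be the projection $(u,v)\mapsto u$, and let $s^\flat:=(s_{(\beta,0)})_\beta$ be the associated marginal sequence. Assuming (i), \Cref{thm:KposDescription} shows that for every $y\in K$ the sequence $s$ has a representing measure supported in $K-y$. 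First I would fix one point $c_0\in C$ and take such a measure $\mu_0$ with $\supp\mu_0\subseteq K-(c_0,0)=(C-c_0)\times[0,\infty)$. The Cauchy--Schwarz estimate $\int|u^\beta|\,\diff\mu_0\le(s_{(2\beta,0)}\,s_0)^{1/2}<\infty$ shows that the push-forward $\nu_0:=\pi_*\mu_0$ is a finite Borel measure with finite moments of all orders which represents $s^\flat$, and that $\supp\nu_0\subseteq\overline{\pi(\supp\mu_0)}\subseteq C-c_0$. Since $C-c_0$ is compact, $s^\flat$ is determinate.

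Next I would repeat this for every $c\in C$: \Cref{thm:KposDescription} gives a representing measure $\mu_c$ of $s$ with $\supp\mu_c\subseteq(C-c)\times[0,\infty)$, so $\pi_*\mu_c$ represents $s^\flat$ and is supported in $C-c$. Determinacy of $s^\flat$ yields $\pi_*\mu_c=\nu_0$ for all $c$, hence $\supp\nu_0\subseteq\bigcap_{c\in C}(C-c)=C^\sharp=\{0\}$, i.e.\ $\nu_0=s_0\cdot\delta_0$. In particular $\mu_0(\{\|u\|\ge\varepsilon\}\times\rset)=\nu_0(\{\|u\|\ge\varepsilon\})=0$ for every $\varepsilon>0$, so $\mu_0$ is supported on $\{0\}\times\rset$; together with $\supp\mu_0\subseteq(C-c_0)\times[0,\infty)$ this gives $\supp\mu_0\subseteq\{0\}\times[0,\infty)=K^\sharp$. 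Thus $\mu_0$ certifies that $s$ is a $K^\sharp$-moment sequence, proving (ii).

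The main obstacle is the measure-theoretic bookkeeping around the push-forward: one must verify that $\pi_*\mu$ genuinely represents $s^\flat$ (absolute convergence of all marginal moments, odd ones included, via Cauchy--Schwarz), that $\supp(\pi_*\mu)\subseteq\overline{\pi(\supp\mu)}$, and recall that a moment sequence admitting a compactly supported representing measure is determinate. Everything else reduces to inserting the right points $y=(c,0)\in K$ into \Cref{thm:KposDescription} and quoting $C^\sharp=\{0\}$; the conceptual content is precisely that determinacy of the $C$-marginal already suffices, even though $s$ need not be determinate.
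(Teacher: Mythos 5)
Your proof is correct.  Compared with the paper's argument, the two routes share the same conceptual core --- compactness of the $C$-factor forces the $C$-part of the support of any representing measure of $s$ to collapse to $\{0\}$, after which the support sits in $K^\sharp=\{0\}\times[0,\infty)$ --- but the technical implementation differs.  The paper reduces to one dimension at a time: it observes that $D(s)$ restricts to a $\pi_i(C)$-positivity preserver with constant coefficients on $\rset[x_i]$, applies \Cref{cor:compactK} to conclude $D(s)|_{\rset[x_i]}=c\cdot\one$, hence $(s_{ke_i})_k=(s_0,0,0,\dots)$, and reads off that every coordinate marginal $\mu_i$ is $s_0\delta_0$; putting the marginals together gives $\supp\mu\subseteq\{0\}\times[0,\infty)$.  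You instead keep the $C$-block together: you form the full $n$-dimensional pushforward $\nu_0=\pi_*\mu_0$, establish its determinacy directly from compact support, and then slide $y=(c,0)$ over all of $C$ so that determinacy forces $\supp\nu_0\subseteq\bigcap_{c\in C}(C-c)=C^\sharp=\{0\}$.  This buys you an argument that does not need to project $C$ onto coordinate axes and makes the determinacy mechanism explicit rather than delegating it to \Cref{cor:compactK}; the price is that you essentially re-prove a version of that corollary along the way and must vary $y$ over all of $C$, whereas the paper gets away with a single representing measure.  Both use \Cref{thm:KposDescription} and \Cref{lem:sKsharpMomSeq} identically, and your identification $\bigcap_{c\in C}(C-c)=C^\sharp$ is exactly the right observation to make the argument close.
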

\begin{proof}
(i) $\Rightarrow$ (ii):
By \Cref{thm:KposDescription} we have that $s$ is a $(K-y)$-moment sequence for all $y\in K$.
Let $\mu_y$ be a representing measure of $s$.
Then $\mu_i$ defined by $\mu_i(A) := \mu(\rset^{i-1}\times A\times \rset^{n-i}\times [0,\infty))$ for any Borel set $A\subseteq\rset$ is a representing measure of $(s_{ke_i})_{k\in\nset_0}$ for $i=1,\dots,n$.
Since $D(s)$ is a $K$-positivity preserver we can restrict it to $\rset[x_i]$.
Then by \Cref{cor:compactK} we have that $\supp\mu_i = \{0\}$ for $i=1,\dots,n$ and hence $\supp\mu = \{0\}\times [0,\infty)$.
Since $\{0\}\times [0,\infty)$ is a closed convex cone with apex $0$ the assertion follows from \Cref{prop:KposPresCone}.

(ii) $\Rightarrow$ (i): This is \Cref{lem:sKsharpMomSeq}.
\end{proof}

\section{Generators of $K$-Positivity Preserving Semi-Groups}
\label{sec:generators}

In the previous section we have characterized all $K$-positvity preservers.
We want to deal now with $K$-positivity preserving semi-groups and their generators.
We start with some definitions.

\begin{dfn}
Let $\cC$ be a convex subset of $\rset[x_1,\dots,x_n]$ which is closed in the LF-topology of $\rset[x_1,\dots,x_n]$.
We denote by
\[\fD_\cC := \left\{ T\in\fD \,\middle|\, T\cC\subseteq\cC\right\}\]
the set of all \emph{$\cC$-preserves} and by
\[\fd_\cC := \left\{ A\in\fd \,\middle|\, e^{tA}\in\fD_\cC\ \text{for all}\ t\geq 0\right\}\]
the set of all \emph{generators of $\cC$-preserving semi-groups}.
If $\cC = \pos(\rset^n)$ we abbreviate
\[\fD_+ := \fD_{\pos(\rset^n)} \quad\text{and}\quad \fd_+ := \fd_{\pos(\rset^n)}.\]
\end{dfn}

Positive semi-groups on Banach lattices can be characterized in terms of their resolvents.
It seems that the case of the LF-space $\rset[x_1,\dots,x_n]$ is rarely considered in the literature; \cite{didio24posPresConst} might be the first work on this topic.
Following ideas from the proof of \cite[Prop.\ 2.3]{ouhabaz05} we obtain the following  description of $\fd_\cC$ via the resolvent.
Note that we only assume $\cC$ to be convex and closed; it does \emph{not} need to be a cone.

\begin{prop}\label{thm:fdCcara}
Let $\cV$ be a linear subspace of $\rset[x_1,\dots,x_n]$ and let $\cC$ be a non-empty convex subset of $\cV$ which is closed in the LF-topology.
Let $A\in\fd$ be such that $A\cV\subseteq\cV$.
For $d\in\nset_0$ we set
\[A_d := A|_{\cV\cap\rset[x_1,\dots,x_n]_{\leq d}} \qquad\text{and}\qquad \cC_d := \cC\cap\rset[x_1,\dots,x_n]_{\leq d}.\]
Then the following are equivalent:
\begin{enumerate}[(i)]
\item $A\in\fd_\cC$.

\item For every $d\in\nset_0$ there exists an $\varepsilon_d>0$ such that
\[(\one - \lambda A_d)^{-1}\cC_d \subseteq \cC_d\]
holds for all $\lambda\in [0,\varepsilon_d)$.
\end{enumerate}
\end{prop}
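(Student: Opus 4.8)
The plan is to reduce everything to a finite-dimensional statement on the invariant subspaces $\cV\cap\rset[x_1,\dots,x_n]_{\leq d}$ and then invoke the classical characterization of invariance of a closed convex set under a one-parameter semigroup via the resolvent (this is exactly the finite-dimensional avatar of \cite[Prop.\ 2.3]{ouhabaz05}). The key structural fact, already available from \Cref{cor:Ad}, is that for $A\in\fd$ with $A\cV\subseteq\cV$ one has $e^{tA}p = e^{tA_d}p$ for every $p\in\cV\cap\rset[x_1,\dots,x_n]_{\leq d}$ and every $d$. Moreover $\cC$ closed in the LF-topology means each $\cC_d$ is closed in the (unique, Euclidean) topology of the finite-dimensional space $\rset[x_1,\dots,x_n]_{\leq d}$, and $\cC_d$ is convex and non-empty for $d$ large enough; for the small $d$ where $\cC_d=\emptyset$ both conditions are vacuous, so we may assume $\cC_d\neq\emptyset$ throughout.

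First I would prove (i)$\Rightarrow$(ii). Fix $d$. Since $A\in\fd_\cC$ we have $e^{tA}\cC\subseteq\cC$ for all $t\ge 0$, and since $e^{tA}$ preserves $\rset[x_1,\dots,x_n]_{\leq d}$ this gives $e^{tA_d}\cC_d\subseteq\cC_d$ for all $t\ge 0$. Now $A_d$ is a bounded operator (a matrix) on the finite-dimensional space $\cW_d:=\cV\cap\rset[x_1,\dots,x_n]_{\leq d}$, so for $\lambda\ge 0$ small, namely $0\le\lambda<\varepsilon_d:=\|A_d\|^{-1}$ (with $\varepsilon_d:=\infty$ if $A_d=0$), the resolvent $(\one-\lambda A_d)^{-1}$ exists and is given by the norm-convergent Neumann series $(\one-\lambda A_d)^{-1}=\sum_{k\in\nset_0}\lambda^k A_d^k$. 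The standard identity
\[
(\one-\lambda A_d)^{-1} = \frac{1}{\lambda}\int_0^\infty e^{-t/\lambda}\, e^{tA_d}\,\diff t
\]
(valid because the integral converges in operator norm for $\lambda<\|A_d\|^{-1}$) exhibits $(\one-\lambda A_d)^{-1}p$, for $p\in\cC_d$, as a Bochner integral over $[0,\infty)$ of the $\cC_d$-valued map $t\mapsto e^{-t/\lambda}/\lambda\cdot e^{tA_d}p$ against the probability measure $\lambda^{-1}e^{-t/\lambda}\diff t$; since $\cC_d$ is closed and convex, such an average lies in $\cC_d$. Hence $(\one-\lambda A_d)^{-1}\cC_d\subseteq\cC_d$ for all $\lambda\in[0,\varepsilon_d)$.

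For (ii)$\Rightarrow$(i) I would run the converse direction of the classical argument. Fix $p\in\cC$ and $t>0$; choose $d$ with $p\in\rset[x_1,\dots,x_n]_{\leq d}$, so $p\in\cC_d$. By \Cref{cor:Ad} it suffices to show $e^{tA_d}p\in\cC_d$. Write $e^{tA_d}p=\lim_{k\to\infty}(\one-\tfrac{t}{k}A_d)^{-k}p$ using the second limit formula of \Cref{cor:expLimAd} applied to $A_d$ on the finite-dimensional space $\cW_d$; for $k$ large enough $t/k<\varepsilon_d$, so by hypothesis (ii), applied $k$ times, $(\one-\tfrac{t}{k}A_d)^{-k}p\in\cC_d$. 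Since $\cC_d$ is closed and $(\one-\tfrac{t}{k}A_d)^{-k}p\to e^{tA_d}p$ in the (Euclidean) topology of the finite-dimensional space, we conclude $e^{tA_d}p\in\cC_d$, hence $e^{tA}p\in\cC$. As $p\in\cC$ and $t\ge 0$ (the case $t=0$ being trivial) were arbitrary, $A\in\fd_\cC$.

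The only genuinely delicate point is justifying the two integral/limit representations of the resolvent and semigroup at the level of these operators — but since each $A_d$ is a bounded operator on a finite-dimensional space, these are elementary matrix-analysis facts; \Cref{cor:expLimAd} already packages the limit formula we need, and the Neumann-series bound $\|(\one-\lambda A_d)^{-1}\|\le(1-\lambda\|A_d\|)^{-1}$ takes care of the rest. The main conceptual obstacle is simply the careful bookkeeping that closedness/convexity of $\cC$ in the LF-topology passes correctly to each finite-dimensional slice $\cC_d$ and that the exponents $\varepsilon_d$ may (and generally must) depend on $d$, since $\|A_d\|$ is typically unbounded in $d$; this is exactly why the statement quantifies $\varepsilon_d$ per level rather than uniformly.
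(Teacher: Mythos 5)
Your proof is correct and follows essentially the same route as the paper's: the forward implication uses the Laplace-transform representation of the resolvent as a probability-measure average of the semigroup $e^{tA_d}$ over the closed convex set $\cC_d$, and the converse uses the backward-Euler limit of \Cref{cor:expLimAd} together with closedness of $\cC_d$. The only differences from the paper's proof are cosmetic (you parametrize by $\lambda=\gamma^{-1}$ from the start, mention the Neumann series as an aside, and explicitly handle the vacuous case $\cC_d=\emptyset$), none of which change the argument.
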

\begin{proof}
(i) $\Rightarrow$ (ii):
Let $d\in\nset_0$ and $p\in\cC_d$.
Let $\|\cdot\|_d$ be a norm on $\rset[x_1,\dots,x_n]_{\leq d}$.
We denote by $\|\cdot\|_d$ the corresponding operator norm for operators $\rset[x_1,\dots,x_n]_{\leq d}\to \rset[x_1,\dots,x_n]_{\leq d}$.
For $\gamma > \|A_d\|_d$ and all $t\geq 0$ we have 
\[ \left\| e^{-\gamma t}\cdot e^{tA_d}p \right\|_d \leq \|p\|_d\cdot e^{(\|A_d\|_d-\gamma)\cdot t}\]
and hence
\[ \gamma(\gamma\one - A_d)^{-1}\, p = \gamma\cdot\int_0^\infty e^{-\gamma t}\cdot e^{tA_d} p~\diff t .\]
Therefore, since $\gamma\int_0^\infty e^{-\gamma t}~\diff t = 1$ we conclude that $\lambda\cdot (\lambda\one - A_d)^{-1} p$ is a convex linear combination of elements $e^{tA_d}p$ in the closed convex set $\cC_d$, i.e.,
\[\gamma(\gamma\one - A_d)^{-1}\cC_d\subseteq\cC_d\]
for all $\gamma> \|A_d\|_d$.
With $\varepsilon_d := \|A_d\|_d^{-1}$ and hence $\lambda := \gamma^{-1} \in [0,\varepsilon_d)$ we get the assertion (ii) for every $d\in\nset_0$.

(ii) $\Rightarrow$ (i): Let $t\geq 0$ and $p\in\cC$.
Set $d := \deg p$.
Then we have that
\[e^{tA}p\quad  \overset{\text{Cor.\ \ref{cor:Ad}}}{=}\quad e^{tA_d}p
\quad \overset{\text{Cor.\ \ref{cor:expLimAd}}}{=}\quad \lim_{k\to\infty} \left(\one - \frac{tA_d}{k}\right)^{-k}p \quad\in\cC_d\]
holds since
\[\left(\one - \frac{tA_d}{k}\right)^{-1}\cC_d \subseteq\cC_d\]
holds for all $k > \varepsilon_d^{-1}$.
The limit $k\to\infty$ of
\[\left(\one - \frac{tA_d}{k}\right)^{-k}p\]
is in $\cC_d$ because $\cC_d$ is closed.
Since $t\geq 0$ and $p\in\cC$ were arbitrary we have that $e^{tA}\cC\subseteq\cC$ holds for all $t\geq 0$, i.e., $A\in\fd_\cC$.
\end{proof}

The linear subspace $\cV$ allows one to treat the sparse case when not all monomials of $\rset[x_1,\dots,x_n]$ are in $\cV$.
But in this general case the description (ii) via the resolvent might be hard to check even in the easiest cases.

\begin{exms}\label{exm:resolventProps}
Let $n = 1$, let $\cV=\rset[x]$, and let $\cC = \pos(\rset)$.
\begin{enumerate}[\bfseries\; (a)]
\item For $A = a\in\rset$ we have 
\[(\one - \lambda a)^{-1}>0\]
for all $\lambda\in (-\infty,a^{-1})$.
Then $A$ acts as the multiplication with a positive real constant, i.e., it preserves all cones $\cC_d$ with $\varepsilon_d = a^{-1}$.

\item For $A = \partial_x$ we have 
\[(\one - \lambda\partial_x)^{-1} = \sum_{k\in\nset_0} \lambda^k\cdot\partial_x^k \quad\in\fD_\cC\]
for all $\lambda\in\rset$ since $s_\lambda := (\lambda^k)_{k\in\nset_0}$ is the moment sequence with representing measure $\mu_\lambda := \delta_\lambda$, i.e., $\pm\partial_x\in\fd_\cC$.

\item For $A = \partial_x^2$ we have
\[(\one - \lambda\partial_x^2)^{-1} = \sum_{k=0}^\infty \lambda^k\cdot \partial_x^{2k} \quad\in \fD_\cC\]
for all $\lambda\in [0,\infty)$ since $s_\lambda = (1,0,\lambda,0,\lambda^2,0,\lambda^3,\dots)$ is a moment sequence represented by the measure $\mu_\lambda = \frac{1}{2}\delta_{-\sqrt{\lambda}} + \frac{1}{2}\delta_{\sqrt{\lambda}}$, i.e., $\partial_x^2\in\fd_\cC$.

\item Let $A = x\partial_x$.
Since $(e^{tA}p)(x) = p(e^t x)$ we have $\pm A\in\fd_\cC$.
Then $Ax^d = dx^d$ holds for all $d\in\nset_0$, i.e.,
\[(\one - \lambda x\partial_x)^{-1}x^d = (1-\lambda d)^{-1}\,x^d\]
holds for all $d\in\nset_0$ and $\lambda\in (-\infty,d^{-1})$ with $0^{-1} = +\infty$.
Here, $\varepsilon_d$ depends explicitly on the degree $d\in\nset_0$ of the restriction $\cC_d$.
\exmsymbol
\end{enumerate}
\end{exms}

\begin{rem}
In the preceding examples the following cases for $[0,\varepsilon_d)$ appeared:
\begin{enumerate}[\it\; (a)]
\item In Example \ref{exm:resolventProps} (a) the interval $[0,\varepsilon_d)$ is bounded from above and open at the right hand side.

\item In Example \ref{exm:resolventProps} (c) the interval $[0,\varepsilon_d)$ is  bounded from below by $0$ and it is closed at the left hand side at $0$.
To see this put $p(x) = x^2\in\pos(\rset)$.
Then
\[(\one-\lambda\partial_x^2)^{-1} \,x^2 = (1 +\lambda\partial_x^2)\, x^2 = x^2 +2\lambda\]
is negative at $x=0$ for any $\lambda<0$, i.e., $(\one-\lambda\partial_x^2)^{-1}x^2\not\in\pos(\rset)$ for any $\lambda<0$.

\item In Example \ref{exm:resolventProps} (d) we have $\varepsilon_d\rightarrow 0$ as $d\to\infty$ in general.
\end{enumerate}
Thus, in \Cref{thm:fdCcara} we can only have $\lambda\in [0,\varepsilon_d)$ with $\lim_{d\to \infty}\varepsilon_d= 0$.
\exmsymbol
\end{rem}

\Cref{cor:expLimAd} and the part ``(ii) $\Rightarrow$ (i)'' of the proof of \Cref{thm:fdCcara} yield the following.

\begin{prop}\label{prop:1plusAd}
Let $\cC$ be convex and closed in the LF-topology of $\rset[x_1,\dots,x_n]$ and let $A\in\fd$.
For every $d\in\nset_0$ we set
\[A_d := A|_{\rset[x_1,\dots,x_n]_{\leq d}} \qquad\text{and}\qquad \cC_d := \cC\cap\rset[x_1,\dots,x_n]_{\leq d}.\]
Assume that
\begin{enumerate}[(i)]
\item for every $d\in\nset_0$ there exists a $\varepsilon_d>0$ such that for all $\lambda\in [0,\varepsilon_d)$ we have
\[(\one + \lambda A_d)\cC_d\subseteq\cC_d.\]
\end{enumerate}
Then
\begin{enumerate}[(i)]\setcounter{enumi}{1}
\item $A\in\fd_\cC$.
\end{enumerate}
\end{prop}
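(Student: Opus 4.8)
The plan is to mimic the ``(ii) $\Rightarrow$ (i)'' part of the proof of \Cref{thm:fdCcara}, but with the resolvent $(\one-\lambda A_d)^{-1}$ replaced by the far simpler operator $\one+\lambda A_d$, and with the limit formula $\exp(tA_d)=\lim_k(\one-tA_d/k)^{-k}$ replaced by the companion formula $\exp(tA_d)=\lim_k(\one+tA_d/k)^k$ supplied by \Cref{cor:expLimAd}.

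First I would fix $d\in\nset_0$ and record that $\cC_d=\cC\cap\rset[x_1,\dots,x_n]_{\leq d}$ is a closed convex subset of the finite-dimensional space $\rset[x_1,\dots,x_n]_{\leq d}$: convexity is inherited from $\cC$, and closedness holds because $\cC$ is closed in the LF-topology while $\rset[x_1,\dots,x_n]_{\leq d}$ carries its unique Euclidean topology, in which the trace of an LF-closed set is closed. Next, for any $t\geq 0$ and any integer $k>t/\varepsilon_d$ we have $t/k\in[0,\varepsilon_d)$, so hypothesis (i) gives $(\one+\tfrac{t}{k}A_d)\cC_d\subseteq\cC_d$; iterating this inclusion $k$ times yields $(\one+\tfrac{t}{k}A_d)^k\cC_d\subseteq\cC_d$.

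Then, for $p\in\cC_d$ the elements $(\one+\tfrac{t}{k}A_d)^k p$ lie in $\cC_d$ for all sufficiently large $k$ and, by \Cref{cor:expLimAd} applied to the bounded operator $A_d$, converge to $\exp(tA_d)p$; since $\cC_d$ is closed, $\exp(tA_d)p\in\cC_d$. Hence $\exp(tA_d)\cC_d\subseteq\cC_d$ for every $d\in\nset_0$ and every $t\geq 0$. Finally, for arbitrary $p\in\cC$ I would set $d:=\deg p$, so that $p\in\cC_d$, and invoke \Cref{cor:Ad} to get $e^{tA}p=e^{tA_d}p\in\cC_d\subseteq\cC$ for all $t\geq 0$. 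As $p\in\cC$ and $t\geq 0$ were arbitrary, $e^{tA}\cC\subseteq\cC$ for all $t\geq 0$, i.e.\ $A\in\fd_\cC$, which is conclusion (ii).

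I do not expect any real obstacle here: convexity of $\cC$ is not even used in the iteration step (only that $\one+\lambda A_d$ maps $\cC_d$ into itself), and the single place closedness enters is to pass the inclusion through the limit $k\to\infty$. The open right endpoint of $[0,\varepsilon_d)$ is harmless, since we only ever use integers $k$ with $t/k<\varepsilon_d$; the main point is simply to line up \Cref{cor:expLimAd} and \Cref{cor:Ad} correctly.
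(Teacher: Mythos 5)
Your proof is correct and is exactly the argument the paper intends: the paper itself dispatches this proposition with the one-line remark that \Cref{cor:expLimAd} together with the ``(ii) $\Rightarrow$ (i)'' half of the proof of \Cref{thm:fdCcara} yield the claim, and your write-out (iterate $(\one+\tfrac{t}{k}A_d)\cC_d\subseteq\cC_d$, pass to the limit $k\to\infty$ via \Cref{cor:expLimAd}, then use \Cref{cor:Ad} and closedness of $\cC_d$) is precisely that adaptation, with the resolvent replaced by $\one+\lambda A_d$. Your side observation that convexity is not needed here, only closedness of $\cC_d$ in the Euclidean topology of $\rset[x_1,\dots,x_n]_{\leq d}$, is also accurate.
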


In the case when  $\cC$ is a cone we have the following result.

\begin{cor}\label{cor:fDfdInclusion}
Let $\cC$ be a convex cone of $ \rset[x_1,\dots,x_n]$ with apex $0$ which is closed in the LF-topology of $\rset[x_1,\dots,x_n]$.
Then
\[\fD_\cC \;\subseteq\; \fd_\cC.\]
\end{cor}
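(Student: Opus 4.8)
The plan is to deduce the inclusion $\fD_\cC\subseteq\fd_\cC$ directly from \Cref{prop:1plusAd}. So let $A\in\fD_\cC$; I must show $A\in\fd_\cC$, i.e., $e^{tA}\cC\subseteq\cC$ for all $t\geq 0$. Fix $d\in\nset_0$ and set $A_d := A|_{\rset[x_1,\dots,x_n]_{\leq d}}$ and $\cC_d := \cC\cap\rset[x_1,\dots,x_n]_{\leq d}$. Since $A\in\fD$, \Cref{lem:degreePreserving} gives $A\rset[x_1,\dots,x_n]_{\leq d}\subseteq\rset[x_1,\dots,x_n]_{\leq d}$, so $A_d$ is a well-defined linear (matrix) operator on the finite-dimensional space $\rset[x_1,\dots,x_n]_{\leq d}$, and since $A\in\fD_\cC$ we have $A_d\cC_d\subseteq\cC_d$; note also $0\in\cC_d$ because $\cC$ is a cone with apex $0$.

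The key step is to verify hypothesis (i) of \Cref{prop:1plusAd}: for each $d$ there is $\varepsilon_d>0$ with $(\one+\lambda A_d)\cC_d\subseteq\cC_d$ for all $\lambda\in[0,\varepsilon_d)$. Here I intend to use that $\cC$ — hence $\cC_d$ — is a convex cone with apex $0$. For $p\in\cC_d$ and $\lambda\in[0,1]$ I would write
\[(\one+\lambda A_d)p = (1-\lambda)\cdot p + \lambda\cdot(p + A_d p).\]
Now $p\in\cC_d$ and, since $\cC_d$ is closed under addition (it is a convex cone, so $u,v\in\cC_d$ implies $u+v = 2\cdot\tfrac12(u+v)\in\cC_d$) and $A_d p\in\cC_d$, also $p+A_dp\in\cC_d$. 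Hence $(\one+\lambda A_d)p$ is a convex combination of two elements of $\cC_d$ and therefore lies in $\cC_d$. Thus (i) holds with $\varepsilon_d := 1$ (in fact $\varepsilon_d$ can be taken to be any positive number, even $+\infty$). \Cref{prop:1plusAd} then yields $A\in\fd_\cC$, completing the proof.

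The only subtlety — and the one place to be careful — is the use of the cone structure: I must make sure that $\cC$ being a \emph{convex} cone with apex $0$ really gives both $\lambda\cC\subseteq\cC$ for $\lambda\geq 0$ (so that $(1-\lambda)p$ and $\lambda(p+A_dp)$ make sense as elements of $\cC_d$) and closure under addition; these are exactly the two properties packaged in "convex cone with apex $0$", so no genuine obstacle arises, but the rewriting $(\one+\lambda A_d)p=(1-\lambda)p+\lambda(p+A_dp)$ is what makes the convexity applicable rather than requiring $\lambda A_d p\in\cC_d$ on its own. One should also remark that \Cref{prop:1plusAd} requires $A\in\fd$, which holds since $\fD\subseteq\fd$ by definition of the two sets, and that the restriction of $\cC$ being closed in the LF-topology is inherited by the hypothesis of the corollary.
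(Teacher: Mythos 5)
Your proof is correct and follows essentially the same route as the paper: deduce the inclusion from \Cref{prop:1plusAd} by verifying that $(\one+\lambda A_d)\cC_d\subseteq\cC_d$ using the cone structure of $\cC_d$. The paper phrases the verification as a direct consequence of $\cC_d$ being a convex cone with apex $0$ (so $\lambda A_d p\in\cC_d$ and $p+\lambda A_d p\in\cC_d$ for all $\lambda\geq 0$), whereas you present it as a convex combination for $\lambda\in[0,1]$; both are valid, and your closing checks (that $A\in\fD_\cC\subseteq\fD\subseteq\fd$, and the role of the apex and closure under addition) are exactly the points worth making explicit.
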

\begin{proof}
Let $A\in\fD_\cC$, i.e., $A\cC_d\subseteq\cC_d$ and hence $(\one + \lambda A)\cC_d\subseteq\cC_d$ for all $d\in\nset_0$ and all $\lambda\in [0,\infty)$.
By \Cref{prop:1plusAd} we have $A\in\fd_\cC$ which proves $\fD_\cC\subseteq\fd_\cC$.
\end{proof}

\begin{rem}
The $\exp$-function in \Cref{cor:fDfdInclusion} can be replaced by a larger class of functions $f$ such that $f(\fD_\cC)\subseteq\fD_\cC$ holds.
Instead of $e^x = \sum_{k\in\nset_0} \frac{x^k}{k!}\in\rset[[x]]$ we can take more functions
\[f(x) = \sum_{k\in\nset_0} a_k x^k \quad\in\rset[[x]]\]
with $a_k\geq 0$ for all $k\in\nset_0$.
The coefficients $a_k$ have to be chosen such that $f(A) = \sum_{k\in\nset_0} a_k A^k$ converges in $\cset[x_1,\dots,x_n]^\sharp[[\partial_1,\dots,\partial_n]]$ for all $A\in\fD_\cC$, see \Cref{dfn:polySharpPartial} and \Cref{lem:polySharpPartialFrechetSpace}.
Then $f(\fD_\cC)\subseteq\fD_\cC$.
Such functions are e.g.\ absolutely monotonic functions with the appropriate conditions on the coefficients $a_k$.
Absolutely and completely monotonic functions are closely connected to the Laplace transform \cite{widder72} which was used in the proof of \Cref{thm:fdCcara} for the resolvent.
\exmsymbol
\end{rem}

The characterization of generators of $\cC$-preserving semi-groups in terms of resolvents given above is hard to check even in the case $\cC = \pos(\rset^n)$.
In \Cref{sec:KposPres} we have seen that the non-constant coefficient case can be reduced to the constant coefficient case and \Cref{thm:PosGeneratorsConstCoeffs} gives a description of the constant coefficient generators for $K=\rset^n$.

In \Cref{prop:KposPresStripe} we have given a description of $C\times [0,\infty)$-positivity preservers with constant coefficients where $C$ is a compact set.
Our next theorem contains a complete description of generators of $C\times [0,\infty)$-positivity preserver semi-groups.
For this we recall the L\'evy--Khinchin formula on $[0,\infty)$ for infinitely divisible measures on $[0,\infty)$, see e.g.\ \cite[Satz 16.14]{klenkewtheorie}.
It states that a probability measure $\mu$ on $[0,\infty)$ is infinitely divisible if and only if its $\log$-Laplace transform
\[\hat{\mu}(t) := -\log \int_0^\infty e^{-tx}~\diff\mu(x)\]
has the form
\[\hat{\mu}(t) = b t + \int_0^\infty (1-e^{-tx})~\diff\nu(x)\]
for all $t\geq 0$ with some number $b\geq 0$ and some Radon measure $\nu$ on $(0,\infty)$ such that
\[\int_0^\infty \max\{1,x\}~\diff\nu(x) < \infty.\]

\begin{thm}\label{thm:generators0infty}
Let $C\subseteq\rset^{n-1}$ be compact and let
\[A = \sum_{\alpha\in\nset_0^n} \frac{a_\alpha}{\alpha!}\cdot\partial^\alpha\]
with $a_\alpha\in\rset$ for all $\alpha\in\nset_0^n$.
Then the following are equivalent:
\begin{enumerate}[(i)]
\item $e^{tA}$ is a $C\times [0,\infty)$-positivity preserver for all $t\geq 0$.

\item $e^{A}$ has an infinitely divisible representing measure supported on $\{0\}\times(0,\infty)$.

\item $e^{tA}$ has an infinitely divisible representing measure supported on $\{0\}\times(0,\infty)$ for all $t\geq 0$.

\item $e^{tA}$ has an infinitely divisible representing measure supported on $\{0\}\times(0,\infty)$ for one $t>0$.

\item There exist a measure Radon $\nu$ on $(0,\infty)$ satisfying
\[\int_0^\infty x^k~\diff\nu(x) < \infty\]
for all $k\in\nset_0$ and a constant $b\geq 0$ such that
\[a_0\in\rset,\qquad a_{e_n} = b + \int_0^\infty x~\diff\nu(x),\qquad \text{and}\qquad a_{k\cdot e_n} = \int_0^\infty x^k~\diff\nu(x)\]
hold for all $k\in\nset$ with $k\geq 2$ and $a_\alpha=0$ holds otherwise.
\end{enumerate}
\end{thm}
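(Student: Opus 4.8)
The plan is to combine the characterization of $C\times[0,\infty)$-positivity preservers with constant coefficients from \Cref{prop:KposPresStripe} with the L\'evy--Khinchin formula recalled above, and to exploit the semi-group structure $e^{(s+t)A}=e^{sA}e^{tA}$ together with \Cref{cor:momSequAddConvMult}(ii) relating composition of the operators $D(\cdot)$ to convolution of the representing measures. The cycle I would establish is $(v)\Rightarrow(iii)\Rightarrow(i)\Rightarrow(ii)\Rightarrow(iv)\Rightarrow(v)$, with $(iii)\Rightarrow(iv)$ and $(iv)\Rightarrow(ii)$ trivial specializations. Throughout, note $K=C\times[0,\infty)$ has $K^\sharp=\{0\}\times[0,\infty)$ by Example \ref{exm:Ksharp}, and \Cref{prop:KposPresStripe} says $e^{tA}=D(s^{(t)})$ is a $K$-positivity preserver iff $s^{(t)}:=(\alpha!\cdot[e^{tA}]_\alpha)_{\alpha}$ is a $K^\sharp$-moment sequence, i.e.\ has a representing measure supported on $\{0\}\times[0,\infty)$.

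For $(v)\Rightarrow(iii)$: given $\nu$ and $b$ as in (v), only the coefficients $a_{ke_n}$ are nonzero. I would restrict attention to the $x_n$-variable, where $A$ becomes a one-variable operator of constant coefficients in $\fd_{0,+}$ plus the scalar $a_0\one$; the conditions on $a_{e_n}$ and $a_{ke_n}$ are exactly the L\'evy--Khinchin data for the $\log$-Laplace transform $\hat\mu(t)=bt+\int_0^\infty(1-e^{-tx})\,d\nu(x)$. Hence for each $t\ge 0$ there is an infinitely divisible probability measure $\mu_t$ on $[0,\infty)$ with $\log$-Laplace transform $t\hat\mu$, and $e^{tA}$ has representing measure $e^{ta_0}\mu_t$ (lifted to $\{0\}\times(0,\infty)$; the atom at $0$ is absorbed if $b$ or $\nu$ vanish, and one checks $\mu_t$ charges no mass at $0$ for $t>0$ when $(b,\nu)\ne 0$, else $A=a_0\one$ and the statement is trivial). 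The key computation here is matching $-\log\int e^{-tx}d\mu_t(x)=t\hat\mu(t)$ against the power-series coefficients of $e^{tA}$ applied to $e^{-x_n\xi}$, i.e.\ verifying that the Taylor coefficients of $\exp\!\big(\sum_{k\ge 1}\frac{a_{ke_n}}{k!}(-\xi)^k\big)$ are the moments of $\mu_t$ — this is the substance and the main obstacle, but it is precisely the content of \Cref{thm:PosGeneratorsConstCoeffs} (one variable) combined with L\'evy--Khinchin.

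For $(iii)\Rightarrow(i)$: an infinitely divisible measure supported on $\{0\}\times(0,\infty)\subseteq K^\sharp$ is in particular a $K^\sharp$-representing measure for $s^{(t)}$, so $e^{tA}=D(s^{(t)})$ is a $K$-positivity preserver by \Cref{lem:sKsharpMomSeq}. For $(i)\Rightarrow(ii)$: by \Cref{prop:KposPresStripe}, $e^A=D(s^{(1)})$ being a $K$-positivity preserver forces $s^{(1)}$ to be a $K^\sharp$-moment sequence; since $K^\sharp=\{0\}\times[0,\infty)$ and the $x_n$-marginal moment sequence is determinate (its representing measure, coming from a one-dimensional Stieltjes moment problem whose moments $a_{ke_n}$ arise from the semi-group and satisfy Carleman by the growth forced by \Cref{thm:PosGeneratorsConstCoeffs}, or more directly because $e^{A}=e^{sA}e^{(1-s)A}$ forces the measure to be infinitely divisible), I would argue the representing measure is unique and infinitely divisible: for each $m$, $e^A=(e^{A/m})^m$, and by \Cref{cor:momSequAddConvMult}(ii) the representing measure of $e^A$ is the $m$-fold convolution of that of $e^{A/m}$, each of which is again supported on $K^\sharp$ by applying $(i)$ at time $1/m$. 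Uniqueness (determinacy) is what lets one conclude these $m$-th convolution roots are genuine, giving infinite divisibility. Finally $(ii)\Rightarrow(v)$: apply L\'evy--Khinchin to the infinitely divisible measure $\mu$ for $e^A$ on $\{0\}\times(0,\infty)$, extract $b\ge 0$ and $\nu$ with $\int\max\{1,x\}d\nu<\infty$; upgrade to $\int x^k d\nu<\infty$ for all $k$ using that $e^A\in\fD$ forces all moments of $\mu$ finite (degree-preservation), hence all $a_{ke_n}<\infty$; then read off that $a_\alpha=0$ whenever $\alpha$ involves $x_1,\dots,x_{n-1}$, because the $x_i$-marginal ($i<n$) of $\mu$ is supported at $\{0\}$, and match the remaining coefficients to $b$ and $\nu$ exactly as in \Cref{thm:PosGeneratorsConstCoeffs}. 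The determinacy/uniqueness step in $(i)\Rightarrow(ii)$ is the main obstacle; I expect to handle it via the Carleman condition forced by the L\'evy--Khinchin growth bound on $\nu$, mirroring \Cref{cor:KwithCarleman}.
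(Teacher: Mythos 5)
Your overall skeleton (reduce to $n=1$ via \Cref{prop:KposPresStripe}, use the L\'evy--Khinchin formula for $(v)\Leftrightarrow(\text{ii})$, use \Cref{lem:sKsharpMomSeq} for the easy return direction) is the same as the paper's, so the interest is in where you diverge: the implication $(i)\Rightarrow(ii)$. There you have a genuine gap.

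You correctly identify that $e^{A}=(e^{A/m})^m$ together with \Cref{cor:momSequAddConvMult}(ii) produces, for each fixed $m$, \emph{some} representing measure of $e^A$ that is an $m$-th convolution power; but different choices of $m$ and of representing measure for $e^{A/m}$ yield possibly different measures for $e^A$, so you do not automatically get a single measure that is simultaneously divisible by every $m$. To glue these together you appeal to determinacy, and propose to obtain it from ``the Carleman condition forced by the L\'evy--Khinchin growth bound on $\nu$.'' This is circular (L\'evy--Khinchin can only be applied after infinite divisibility is established, which in your scheme requires the determinacy you are trying to prove) and, independently, false as a general fact: the integrability condition $\int\max\{1,x\}\,\diff\nu(x)<\infty$ in L\'evy--Khinchin places no polynomial growth bound on the higher moments of $\nu$, so neither $\nu$ nor the resulting infinitely divisible measure need satisfy Carleman, and Stieltjes-indeterminate examples are not excluded. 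The paper's proof deliberately avoids determinacy altogether: it fixes one representing measure $\mu_{1/k!}$ of $e^{A/k!}$ for each $k$, forms $\nu_k:=(\mu_{1/k!})^{*k!}$, uses that $\rset[x]$ is an adapted space (so the set of representing measures is vaguely compact, \cite[Thm.\ 1.19]{schmudMomentBook}) to extract a vague limit $\nu$ representing $e^A$, and then for each $l$ extracts a further vague limit of $l$-th convolution roots along a subsequence. This produces one concrete representing measure that is divisible by every $l$, without any uniqueness claim. You would need to replace your determinacy argument by this (or an equivalent) compactness argument; as written, the step does not close.

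A smaller point: in $(v)\Rightarrow(iii)$ you defer the coefficient matching (equating the Taylor coefficients of $\exp\bigl(\sum_{k\ge 1}\tfrac{a_{ke_n}}{k!}(-\xi)^k\bigr)$ with the moments of $\mu_t$) to ``\Cref{thm:PosGeneratorsConstCoeffs} combined with L\'evy--Khinchin.'' \Cref{thm:PosGeneratorsConstCoeffs} is the $K=\rset^n$ result and does not directly apply on $[0,\infty)$; the paper instead performs this matching explicitly as a formal power series identity in $\rset[[t]]$ after taking a logarithm in the constant-coefficient algebra (\cite[Thm.\ 2.17]{didio24posPresConst}). This is a fillable detail, not a gap, but it should be spelled out rather than attributed to a result proved for a different $K$.
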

\begin{proof}
We adapt the idea of the proof of \cite[Thm.\ 4.7 and 4.11]{didio24posPresConst}.

Without loss of generality we set $a_0 = 0$ since a scaling by $e^{ta_0}$ preserves positivity and commutes with all $\partial^\alpha$.
Further, by \Cref{prop:KposPresStripe} we have that all $C\times [0,\infty)$-positivity preservers are supported on $\{0\}\times [0,\infty)$.
Hence, it is sufficient to prove the result for $n=1$.

(i) $\Rightarrow$ (ii):
Since $e^{tA}$ is a $[0,\infty)$-positivity preserver for all $t\geq 0$ there exists a representing measure $\mu_t$ on $[0,\infty)$ by \Cref{prop:KposPresStripe}.
Set $\nu_k := (\mu_{1/k!})^{*k!}$ for all $k\in\nset$.
Then $\nu_k$ is a representing measure of $e^{A}$ for all $k\in\nset$.
Since $\rset[x]$ is an adapted space we have that $(\nu_k)_{k\in\nset}$ is vaguely compact by \cite[Thm.\ 1.19]{schmudMomentBook} and there exists a subsequence $(k_i)_{i\in\nset}$ such that $\nu_k\to\nu$ and $\nu$ is a representing measure of $e^A$.
All measures $\nu_k$, $k\in\nset$, and $\nu$ are supported on $[0,\infty)$.

We have to show that $\nu$ is infinitely divisible, i.e., for every $l\in\nset$ there exists a Radon measure $\omega_l$ on $[0,\infty)$ with $\omega_l^{*l} = \nu$.

Let $l\in\nset$.
For $i\geq l$ we define
\[\omega_{l,i}:= (\mu_{1/k_1!})^{*k_i!/l},\]
i.e., $\omega_{l,i}$ is a representing measure of $e^{A/l}$.
Again, since $\rset[x]$ is an adapted space by \cite[Thm.\ 1.19]{schmudMomentBook} there exists a subsequence $(i_j)_{j\in\nset}$ such that $\omega_{l,i_j}$ converges to some $\omega_l$.
Hence, 
\[(\omega_l)^{*l} = \lim_{j\to\infty} (\omega_{l,i_j})^{*l} = \lim_{j\to\infty} \nu_{k_{i_j}} = \nu,\]
i.e., $\nu$ is divisible by all $l\in\nset$.
Therefore, we have that $\nu$ is an infinitely divisible representing measure of $e^A$.

(ii) $\Rightarrow$ (i):
Let $\mu_1$ be an infinitely divisible representing measure of $e^A$.
Then $\mu_q := \mu_1^{*q}$ exists for all $q\in\qset\cap [0,\infty)$.
Since $\exp:\fd\to\fD$ is continuous by \Cref{thm:DregularFrechetLieGroup} and the set of $K$-positivity preservers is closed by \Cref{lem:closedPosPres} it follows that $e^{qA}$ is a $[0,\infty)$-positivity preserver for all $q\geq 0$.
This proves (i).

The implications ``(i) $\Leftrightarrow$ (iii)'' and ``(i) $\Leftrightarrow$ (iv)'' follow from ``(i) $\Leftrightarrow$ (ii)'' by setting $\tilde{A} = tA$ for $t>0$.

(ii) $\Leftrightarrow$ (v): 
By (ii) there exists an infinitely divisible representing measure $\mu$ of $e^A$.
Then we have
\begin{equation}\label{eq:proofHalf1}
e^A = \sum_{k\in\nset_0} \frac{1}{k!}\int_0^\infty x^k~\diff\mu(x)\cdot \partial_x^k.
\end{equation}
By \cite[Thm.\ 2.17]{didio24posPresConst} we can take the logarithm since we are in the constant coefficients case.
Hence, (\ref{eq:proofHalf1}) is equivalent to
\begin{equation}\label{eq:proofHalf2}
A = \sum_{k\in\nset} \frac{a_k}{k!}\cdot\partial_x^k = \log\left( \sum_{k\in\nset_0} \frac{1}{k!}\cdot \int_0^\infty x^k~\diff\mu(x)\cdot\partial_x^k \right).
\end{equation}
With the isomorphism
\[\cset[[\partial_x]] \to \cset[[t]],\quad \partial_x\mapsto -t\]
it follows that (\ref{eq:proofHalf2}) is equivalent to
\begin{align}
\sum_{k\in\nset} \frac{a_k}{k!}\cdot (-t)^k &= \log \left( \sum_{k\in\nset_0} \frac{1}{k!}\cdot \int_0^\infty x^k~\diff\mu(x) \cdot (-t)^k \right).\label{eq:proofHalf3}
\intertext{The right hand side of (\ref{eq:proofHalf3}) is}
&= \log \int_0^\infty e^{-tx}~\diff\mu(x),\notag
\intertext{i.e., it is the negative $-\hat{\mu}$ of the characteristic function $\hat{\mu}$ of the measure $\mu$.
By the L\'evy--Khinchin formula on $[0,\infty)$, see \ \cite[Satz 16.14]{klenkewtheorie}, we obtain}
&= -bt + \int_0^\infty (e^{-tx}-1)~\diff\nu(x).\label{eq:proofHalf4}
\intertext{After a formal power series expansion of $e^{-tx}$ in the Fr\'echet topology of $\rset[[t]]$ we have that (\ref{eq:proofHalf4}) is equivalent to}
\sum_{k\in\nset} \frac{a_k}{k!}\cdot (-t)^k &= -bt + \sum_{k\in\nset} \frac{1}{k!}\cdot \int_0^\infty x^k~\diff\nu(x)\cdot (-t)^k.\label{eq:proofHalf5}
\end{align}
A comparison of the coefficients in $\rset[[t]]$ gives (\ref{eq:proofHalf5}) $\Leftrightarrow$ (v).
\end{proof}

\begin{rem}
By a slight modification of the preceding proof it follows that a similar result as \Cref{thm:PosGeneratorsConstCoeffs} holds for sets $K = C\times \rset^n$ with $C\subseteq\rset^m$ compact, $n,m\in\nset_0$.
The coefficients $a_{(\alpha,\beta)}$ with $(\alpha,\beta)\in\nset_0^m\times\nset_0^n$ are then the coefficients from \Cref{thm:PosGeneratorsConstCoeffs} (ii) for all $a_{(0,\beta)}$ with $\beta\in\nset_0^n$ and $a_{(\alpha,\beta)}=0$ for all $\alpha\in\nset_0^m\setminus\{0\}$ and $\beta\in\nset_0^n$.
\exmsymbol
\end{rem}

By \Cref{thm:generators0infty} we see that the only generators of finite degree of $[0,\infty)$-positivity preserving semi-groups with constant coefficients are of the form
\[a_0 + a_1\partial_x\]
with $a_0\in\rset$ and $a_1 \geq 0$, i.e., the semi-group acts by scaling and translation.

Let us compare the generators with constant coefficients on $\rset$, see \Cref{thm:PosGeneratorsConstCoeffs}, and the generators with constant coefficients on $[0,\infty)$, see \Cref{thm:generators0infty}.
On $[0,\infty)$ the contributions of the second derivatives $\sigma_{i,j}\partial_i \partial_j$ are missing since they correspond to the solution of the heat equation.
The convolution with the heat kernel does not preserve $[0,\infty)$.
On $\rset$ the measure $\nu$ can have a pole at $x=0$, i.e.,
$\int_\rset |x|~\diff\nu(x)$ is not necessarily finite, so $\nu$ it is not a moment measure in general.
On $[0,\infty)$ we have that $\nu$ is a moment measure on $[0,\infty)$ such that $\nu(\{0\})=0$.

\begin{cor}
Let
\[A = \sum_{k\in\nset_0} \frac{a_k}{k!}\cdot\partial_x^k\]
with $a_k\in\rset$ for all $k\in\nset_0$.
Then the following are equivalent:
\begin{enumerate}[(i)]
\item $e^{tA}$ is a $[0,\infty)$-positivity preserver for all $t\geq 0$.

\item There exist a $[0,\infty)$-moment sequence $s=(s_k)_{k\in\nset_0}$ and constants $\alpha\in\rset$ and $\beta\geq 0$ such that
\[a_0 = s_0 + \alpha,\qquad a_1 = s_1 + \beta,\qquad \text{and}\qquad a_k=s_k\ \text{for all}\ k\geq 2.\]
\end{enumerate}
If $\mu$ is a representing measure of $s$ then
\[e^{\alpha t}\cdot e^{*t\mu}*\delta_{\beta t}\]
is a representing measure of $e^{tA}$ for all $t\geq 0$ with
\[e^{*t\mu} = \sum_{k\in\nset_0} \frac{t^k}{k!}\cdot \mu^{*k}.\]
\end{cor}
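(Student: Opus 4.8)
The plan is to derive this as a consequence of \Cref{thm:generators0infty} by handling the constant term $a_0$ and the linear term $a_1$ separately from the higher-order part. First I would write $A = A_0 + \alpha\one + \beta\partial_x$ where $A_0 := \sum_{k\geq 2}\frac{a_k}{k!}\partial_x^k + \frac{s_1}{1!}\partial_x + \frac{s_0}{0!}\one$ — but more cleanly, I would reduce to the normalized situation as in the proof of \Cref{thm:generators0infty}: since multiplication by $e^{ta_0}$ commutes with every $\partial^\alpha$ and preserves $\pos([0,\infty))$, positivity-preservation of $e^{tA}$ is equivalent to positivity-preservation of $e^{t(A - a_0\one)}$. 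Similarly, $e^{t\beta\partial_x}$ is translation by $\beta t$, which preserves $\pos([0,\infty))$ precisely when $\beta\geq 0$ (for $\beta<0$ it fails on, e.g., $p(x)=x$ evaluated near $0$), and it commutes with all $\partial_x^k$. So after splitting off $\alpha\one$ and $\beta\partial_x$, the content is exactly the statement that $e^{tB}$ with $B = \sum_{k\geq 2}\frac{s_k}{k!}\partial_x^k + s_1\partial_x$ is a $[0,\infty)$-positivity preserver for all $t\geq 0$ if and only if $s = (s_k)_{k\in\nset_0}$ (with the understanding $s_0$ absorbed into $\alpha$) is the moment sequence of a measure supported on $[0,\infty)$.

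The key step is then to match this against \Cref{thm:generators0infty}, condition (v): that condition says $e^{tA}$ (after normalizing $a_0=0$) preserves $\pos([0,\infty))$ iff there is a Radon measure $\nu$ on $(0,\infty)$ with all moments finite and a constant $b\geq 0$ such that $a_1 = b + \int_0^\infty x\,\diff\nu$ and $a_k = \int_0^\infty x^k\,\diff\nu$ for $k\geq 2$, all other coefficients vanishing. The translation between the two formulations is: a $[0,\infty)$-moment sequence $s$ with representing measure $\mu$ splits as $\mu = \mu(\{0\})\,\delta_0 + \mu|_{(0,\infty)}$, and $s_k = \int_0^\infty x^k\,\diff\mu = \int_0^\infty x^k\,\diff\nu$ for $k\geq 2$ where $\nu := \mu|_{(0,\infty)}$, while $s_0 = \mu([0,\infty))$ is free (it contributes only to the scaling constant $\alpha$) and $s_1 = \int_0^\infty x\,\diff\nu$. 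Writing $a_0 = s_0 + \alpha$, $a_1 = s_1 + \beta$ with $\beta := b \geq 0$, and $a_k = s_k$ for $k\geq 2$ gives exactly (ii); conversely, given data as in (ii), set $\nu$ to be a representing measure for $(0,s_1,s_2,\dots)$ restricted to $(0,\infty)$ — more precisely take any representing measure $\mu$ of $s$ and put $\nu := \mu|_{(0,\infty)}$, noting its moments agree with those of $s$ in degrees $\geq 1$ — and recover condition (v) with $b = \beta$. The finiteness $\int_0^\infty x^k\,\diff\nu < \infty$ is automatic since $\nu \leq \mu$ and $\mu$ has all moments.

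Finally, for the representing-measure formula: once $e^{tB}$ is known to be a $[0,\infty)$-positivity preserver, \Cref{thm:PosGeneratorsConstCoeffs}-style reasoning (or directly \Cref{cor:momSequAddConvMult}) identifies its representing measure. Since $B$ has the normalized form associated to $\nu$ via the L\'evy--Khinchin picture, $e^{tB}$ corresponds on the level of measures to $e^{*t\nu} := \sum_{k}\frac{t^k}{k!}\nu^{*k}$ (the standard exponential of a finite measure under convolution, convergent because $\nu$ has all moments and $\rset[x]$-boundedness is controlled degree-by-degree via \Cref{cor:Ad}), and re-incorporating the scaling $e^{\alpha t}$ and the translation $\delta_{\beta t}$ gives $e^{\alpha t}\cdot e^{*t\mu} * \delta_{\beta t}$ — here I would note that using $\mu$ rather than $\nu$ in $e^{*t\mu}$ only changes the answer by the scalar factor $e^{t\mu(\{0\})}$, which can be absorbed into $\alpha$, so the stated formula is the one obtained with the given normalization $a_0 = s_0 + \alpha$. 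I expect the main obstacle to be purely bookkeeping: cleanly tracking how the "free" parameters $s_0$ (resp.\ the atom $\mu(\{0\})$) and the split of the degree-one coefficient into $s_1 + \beta$ are distributed between the measure-exponential, the scalar $e^{\alpha t}$, and the translation $\delta_{\beta t}$, so that the final formula is literally consistent with (ii). No new analytic input beyond \Cref{thm:generators0infty} and \Cref{cor:momSequAddConvMult} is needed.
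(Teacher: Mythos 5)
Your proposal is correct and follows essentially the same route as the paper: the paper reads off the equivalence (i)$\Leftrightarrow$(ii) directly from Theorem~\ref{thm:generators0infty}(i)$\Leftrightarrow$(v), and obtains the representing-measure formula by writing $A = D(s) + D(a) + D(b)$ with $a=(\alpha,0,0,\dots)$, $b=(0,\beta,0,\dots)$ and exponentiating the three commuting pieces. You simply make explicit the (correct) bookkeeping translation between condition~(v) and condition~(ii) that the paper leaves tacit.
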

\begin{proof}
The equivalence ``(i) $\Leftrightarrow$ (ii)'' is \Cref{thm:generators0infty} ``(i) $\Leftrightarrow$ (v)''.

Let $\mu$ be a representing measure of the sequence $s = (s_k)_{k\in\nset_0}$ and set $a := (\alpha\cdot\delta_{0,k})_{k\in\nset_0} = (\alpha,0,0,\dots)$ and $b := (\beta\cdot\delta_{1,k})_{k\in\nset_0} = (0,b,0,0,\dots)$.
Then we have $A = D(s) + D(a) + D(b)$.
Since these differential operators have constant coefficients all three operators commute.
Hence,
\[e^{tA} = e^{tD(a)} e^{tD(s)} e^{tD(b)}\]
holds for all $t\geq 0$.
From
\begin{align*}
e^{tD(a)} &= e^{\alpha t},\\
e^{tD(s)} &= \sum_{k\in\nset_0} \frac{t^k}{k!}\cdot D(s)^k = \sum_{k\in\nset_0} \frac{t^k}{k!}\cdot D(s^{*k}),
\intertext{and}
e^{tD(b)} &= e^{\beta t\partial_x} = \sum_{k\in\nset_0} \frac{(\beta t)^k}{k!}\cdot \partial_x^k
\end{align*}
it follows that $e^{tD(a)}$ is represented by the measure $e^{\alpha t}\cdot\delta_0$, $e^{tD(s)}$ is represented by  $e^{*t\mu}$, and $e^{tD(b)}$ as the shift operator is represented by $\delta_{\beta t}$ for all $t\geq 0$.
\end{proof}

It is clear that if $\mu$ is a moment measure then $e^{*t\mu}$ is an infinitely divisible moment measure for all $t\geq 0$.
If $\mu$ is indeterminate then $e^{*\mu}$ is indeterminate.

We want to use the description of generators of $K$-positive semi-groups with constant coefficients for the non-constant coefficient case.
The following direction always holds.

\begin{thm}\label{prop:genK}
Let $K\subseteq\rset^n$ be closed and let
\[A = \sum_{\alpha\in\nset_0^n} \frac{a_\alpha}{\alpha!}\cdot \partial^\alpha\]
be with $a_\alpha\in\rset[x_1,\dots,x_n]_{\leq |\alpha|}$ for all $\alpha\in\nset_0^n$.
If
\begin{enumerate}[(i)]
\item for all $y\in K$ the operator
\[A_y := \sum_{\alpha\in\nset_0^n} \frac{a_\alpha(y)}{\alpha!}\cdot\partial^\alpha\]
is a generator of a $K$-positivity preserving semi-group
\end{enumerate}
then
\begin{enumerate}[(i)]\setcounter{enumi}{1}
\item $e^{tA}$ is a $K$-positivity preserver for all $t\geq 0$.
\end{enumerate}
\end{thm}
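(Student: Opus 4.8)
The plan is to reduce the claim to a finite-dimensional invariance statement and then, by freezing the polynomial coefficients of $A$ at each point, to build an honest linear one-step approximation of the flow $e^{tA}$ that visibly maps $\pos(K)$ into itself. The crucial algebraic input is that the degree constraint defining $\fd$ is exactly what keeps this one-step operator inside a fixed space $\rset[x_1,\dots,x_n]_{\leq d}$.

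\textbf{Reduction.} First I would fix $d\in\nset_0$ and set $V_d:=\rset[x_1,\dots,x_n]_{\leq d}$, $A_d:=A|_{V_d}$ (well defined since $A\in\fd$), and $C_d:=\pos(K)\cap V_d$, a closed convex cone in the finite-dimensional space $V_d$. By \Cref{cor:Ad} we have $e^{tA}p=e^{tA_d}p$ for every $p\in V_d$, and $\pos(K)=\bigcup_{d\in\nset_0}C_d$. Hence it suffices to prove $e^{tA_d}C_d\subseteq C_d$ for all $t\geq 0$ and all $d\in\nset_0$.

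\textbf{The one-step operator.} For $h\geq 0$ I would define a linear map $\Phi_h\colon V_d\to V_d$ by $(\Phi_h p)(x):=\big(e^{h A_x}p\big)(x)$, where $A_x=\sum_{\alpha\in\nset_0^n}\tfrac{a_\alpha(x)}{\alpha!}\partial^\alpha\in\fd$ is the constant-coefficient operator obtained by freezing the coefficients at $x$ (note $e^{hA_x}p\in V_d$ by \Cref{cor:Ad} applied to $A_x$, so the diagonal evaluation makes sense). The first point to verify is that $\Phi_h$ really maps into $V_d$. Expanding $e^{hA_x}=\sum_{k\geq 0}\tfrac{h^k}{k!}A_x^k$ and noting $A_x$ has $z$-constant coefficients, one gets $\Phi_h p=\sum_{k\geq 0}\tfrac{h^k}{k!}c_k$ with $c_k(x)=\big[A_x^k p\big](z)\big|_{z=x}=\sum_{\alpha^{(1)},\dots,\alpha^{(k)}}\tfrac{a_{\alpha^{(1)}}(x)\cdots a_{\alpha^{(k)}}(x)}{\alpha^{(1)}!\cdots\alpha^{(k)}!}\,(\partial^{\alpha^{(1)}+\dots+\alpha^{(k)}}p)(x)$. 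Since $a_\alpha\in\rset[x_1,\dots,x_n]_{\leq|\alpha|}$ and $\deg(\partial^\beta p)\leq d-|\beta|$, each term has $x$-degree $\leq \sum_j|\alpha^{(j)}|+\big(d-\sum_j|\alpha^{(j)}|\big)=d$, so $\deg c_k\leq d$; a routine count also yields an estimate $\|c_k\|\leq C_d\,M^k(k+1)^d\|p\|$ (with $M$ bounding the coefficients of the $a_\alpha$), so the series converges in $V_d$ and $\Phi_h p\in V_d$. From this I read off two facts. First, $\Phi_h C_d\subseteq C_d$: for $p\in C_d$ and $x\in K$, hypothesis (i) gives $e^{hA_x}p\in\pos(K)$, hence $(\Phi_h p)(x)=(e^{hA_x}p)(x)\geq 0$ because $x\in K$. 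Second, since $c_0=p$ and $c_1=A_dp$, the same estimate for $k\geq 2$ gives $\Phi_h=\one+hA_d+O(h^2)$ on $V_d$ as $h\to 0^+$.

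\textbf{Passing to the limit.} From $\Phi_h=\one+hA_d+O(h^2)$ a standard product estimate (exactly as in the proof of \Cref{cor:expLimAd}) gives $\Phi_{t/k}^{\,k}p\to e^{tA_d}p$ as $k\to\infty$, for every $p\in V_d$ and $t\geq 0$. For $p\in C_d$ we have $\Phi_{t/k}^{\,k}p\in C_d$ for all $k$ by iterating $\Phi_h C_d\subseteq C_d$, and since $C_d$ is closed we conclude $e^{tA_d}p\in C_d$. Thus $e^{tA_d}C_d\subseteq C_d$ for all $t\geq 0$ and all $d$, which by the reduction above yields $e^{tA}\pos(K)\subseteq\pos(K)$ for all $t\geq 0$, i.e.\ (ii). I expect the main obstacle to be precisely the well-definedness of $\Phi_h$: showing that freezing coefficients and evaluating on the diagonal stays inside $V_d$, with a quantitative bound on $\|c_k\|$ strong enough to make the defining $h$-series converge and to control the $O(h^2)$ remainder uniformly; this is the one step where the defining degree condition of $\fd$ is used in an essential way, and everything else is bookkeeping or a standard semigroup limit.
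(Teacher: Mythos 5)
Your proof is correct and follows essentially the same route as the paper's: the operator you call $\Phi_h$ (freeze coefficients at the base point, apply the constant-coefficient exponential, evaluate on the diagonal) is exactly the paper's operator $G_h$, obtained by viewing $y$ as extra variables and then setting $x=y$. Both arguments then verify the one-step invariance of $\pos(K)\cap V_d$ from hypothesis (i), expand $\Phi_h=\one+hA_d+O(h^2)$ on the finite-dimensional space $V_d$, and pass to the limit $\Phi_{t/k}^k\to e^{tA_d}$ using closedness; the only cosmetic difference is that you make the degree bookkeeping for $\Phi_h:V_d\to V_d$ and the $O(h^2)$ estimate slightly more explicit than the paper does.
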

\begin{proof}
We introduce $y=(y_1,\dots,y_n)\in K$ as new variables.
Then
\[e^{tA_y} = \sum_{k\in\nset_0} \frac{t^k}{k!} A_y^k\]
for all $t\in\rset$ by \Cref{thm:DregularFrechetLieGroup}, i.e., we have $e^{tA_y} = \one + t A_y + t^2 R_y(t)$ for some error term $R_y(t)$ such that $R_y(t)$ on $\rset[x_1,\dots,x_n]_{\leq d}$ is uniformly bounded for all $t\in [0,1]$ and for fixed $d\in\nset_0$.

Let $f\in\rset[x_1,\dots,x_n]$.
Since $\deg a_\alpha\leq |\alpha|$ for all $\alpha\in\nset_0^n$ we have
\[g(x,y,t,f) := e^{tA_y}f = (\one + tA_y + t^2 R_y(t))f\in \rset[x_1,\dots,x_n,y_1,\dots,y_n]_{\leq \deg f}\]
for all $t\in\rset$.
By (i) we have that $A_y$ is a generator of a $K$-positivity preserver with constant coefficients, i.e.,
\[g(x,y,t,f)\geq 0\]
for all $f\in\pos(K)$, $x,y\in K$, and $t\geq 0$.
Hence, with $x=y$ we obtain
\begin{equation}\label{eq:gpos}
g(x,x,t,f) \geq 0
\end{equation}
for all $f\in\pos(K)$, $x\in K$, and $t\geq 0$.
For $t\in\rset$ we define
\[G_t:\rset[x_1,\dots,x_n]\to\rset[x_1,\dots,x_n]\]
by
\[(G_t f)(x) := (e^{tA_y}f)(x,x) = g(x,x,t,f),\]
i.e., we have
\[G_t f = (\one + tA + t^2 R(t)) f\]
for some error term $R(t)$ which is on $\rset[x_1,\dots,x_n]_{\leq d}$, $d\in\nset_0$, uniformly bounded in $t\in [0,1]$.
By (\ref{eq:gpos}) we have that $G_t$ is a $K$-positivity preserver for all $t\geq 0$.
Hence,
\[e^{tA} \overset{\text{Cor.\ \ref{cor:expLimAd}}}{=} \lim_{k\to\infty} \left(\one + \frac{t A}{k} \right)^k = \lim_{k\to\infty} \left(\one + \frac{t A + t^2 R(t)}{k} \right)^k = \lim_{k\to\infty} G_{t/k}^k\]
and since all $G_{t/k}$ are $K$-positivity preservers also $G_{t/k}^k$ are $K$-positivity preservers as well.
Therefore, since the set of $K$-positivity preservers is closed by \Cref{lem:closedPosPres} (ii) we have that $e^{tA}$ is a $K$-positivity preserver.
Since $t\geq 0$ was arbitrary we have that (ii) is proved.
\end{proof}

The implication ``(ii) $\Rightarrow$ (i)'' in \Cref{prop:genK} does not hold in general, see e.g.\ \Cref{rem:shift}.
For $K=\rset^n$ the implication holds and we get the following.

\begin{thm}\label{thm:PosGeneratorsGeneral}
Let
\[A = \sum_{\alpha\in\nset_0^n} \frac{a_\alpha}{\alpha!}\cdot\partial^\alpha\]
be with $a_\alpha\in\rset[x_1,\dots,x_n]_{\leq |\alpha|}$ for all $\alpha\in\nset_0^n$.
Then the following are equivalent:
\begin{enumerate}[(i)]
\item $A\in\fd_+$, i.e., $e^{tA}$ is a positivity preserver for all $t\geq 0$.

\item For every $y\in\rset^n$ there exist a symmetric matrix $\Sigma(y) = (\sigma_{i,j}(y))_{i,j=1}^n$ with real entries such that $\Sigma(y)\succeq 0$, a vector $b(y) = (b_1(y),\dots,b_n(y))^T\in\rset^n$, $a_0\in\rset$, and a measure $\nu_y$ on $\rset^n$ with
\[\int_{\|x\|_2\geq 1} |x_i|~\diff\nu_y(x)<\infty \qquad\text{and}\qquad \int_{\rset^n} |x^\alpha|~\diff\nu_y(x)<\infty\]
for $i=1,\dots,n$ and $\alpha\in\nset_0^n$ with $|\alpha|\geq 2$ such that
\begin{align*}
a_{e_i}(y) &= b_i(y) + \int_{\|x\|_2\geq 1} x_i~\diff\nu_y(x) && \text{for}\ i=1,\dots,n,\\
a_{e_i+e_j}(y) &= \sigma_{i,j}(y) + \int_{\rset^n} x^{e_i + e_j}~\diff\nu_y(x) && \text{for all}\ i,j=1,\dots,n,
\intertext{and}
a_\alpha(y) &= \int_{\rset^n} x^\alpha~\diff\nu_y(x) &&\text{for}\ \alpha\in\nset_0^n\ \text{with}\ |\alpha|\geq 3.
\end{align*}
\end{enumerate}
\end{thm}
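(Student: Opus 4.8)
The plan is to prove the two implications separately, reducing in both directions to the constant-coefficient result \Cref{thm:PosGeneratorsConstCoeffs} applied to the frozen operators $A_y:=\sum_{\alpha\in\nset_0^n}\frac{a_\alpha(y)}{\alpha!}\partial^\alpha$. Note first that $a_0\in\rset[x_1,\dots,x_n]_{\leq 0}=\rset$ is automatic, so ``$a_0\in\rset$'' in (ii) is part of the standing hypothesis on $A$ rather than an extra conclusion.

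\emph{The direction ``(ii)$\Rightarrow$(i)''.} Fix $y\in\rset^n$. By (ii) the numbers $(a_\alpha(y))_{|\alpha|\geq 1}$ have exactly the form required in \Cref{thm:PosGeneratorsConstCoeffs}(ii), hence $\sum_{|\alpha|\geq 1}\frac{a_\alpha(y)}{\alpha!}\partial^\alpha\in\fd_{0,+}$; since $a_0\one$ commutes with every $\partial^\alpha$ and $e^{ta_0}>0$, we get $e^{tA_y}=e^{ta_0}\cdot e^{t\sum_{|\alpha|\geq1}\frac{a_\alpha(y)}{\alpha!}\partial^\alpha}\in\fD_+$ for all $t\geq 0$, i.e.\ $A_y$ generates a positivity-preserving semi-group. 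As $y\in\rset^n$ was arbitrary, hypothesis (i) of \Cref{prop:genK} holds, and \Cref{prop:genK} yields (i).

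\emph{The direction ``(i)$\Rightarrow$(ii)''.} I would first extract an infinitesimal condition from (i). If $p\in\pos(\rset^n)$ and $p(y)=0$, then $t\mapsto(e^{tA}p)(y)$ is (real-)analytic — on $\rset[x_1,\dots,x_n]_{\leq\deg p}$ it equals $e^{tA_{\deg p}}$ by \Cref{cor:Ad}, a matrix exponential — and $(e^{tA}p)(y)=p(y)+t\,(Ap)(y)+O(t^2)=t\,(Ap)(y)+O(t^2)\geq 0$ for all $t\geq 0$ by (i), which forces $(Ap)(y)\geq 0$. Thus $A$ is nonnegative on the zero set of every nonnegative polynomial. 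I then transfer this to each frozen operator using the translation invariance of $\pos(\rset^n)$: given $y_0\in\rset^n$ and $q\in\pos(\rset^n)$ with $q(z)=0$, the shift $\tilde q(x):=q(x-z+y_0)$ lies in $\pos(\rset^n)$, vanishes at $y_0$, and satisfies $(\partial^\alpha\tilde q)(y_0)=(\partial^\alpha q)(z)$; hence $0\leq(A\tilde q)(y_0)=\sum_\alpha\frac{a_\alpha(y_0)}{\alpha!}(\partial^\alpha q)(z)=(A_{y_0}q)(z)$. So for every $y_0\in\rset^n$ the constant-coefficient operator $A_{y_0}$ is again nonnegative on the zero set of every nonnegative polynomial.

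It remains to show that a constant-coefficient operator $B=\sum_\alpha\frac{c_\alpha}{\alpha!}\partial^\alpha$ with this property has $(c_\alpha)_{|\alpha|\geq 1}$ of L\'evy--Khinchin form; applied with $B=A_{y_0}$ and $c_\alpha=a_\alpha(y_0)$, and letting $y_0$ vary, this gives (ii). This is the crux, and it is exactly what the constant-coefficient theory supplies. By translation invariance one may take the zero to be at the origin, where the hypothesis says the linear functional $q=\sum_\beta q_\beta x^\beta\mapsto\sum_\beta c_\beta q_\beta$ is nonnegative on $\pos(\rset^n)\cap\{q:q(0)=0\}$; since a nonnegative $q$ with $q(0)=0$ has vanishing gradient and positive semidefinite Hessian at $0$, the first-order coefficients $c_{e_i}$ are unconstrained (absorbed into the drift $b(y_0)$), the second-order coefficients produce a $\Sigma(y_0)\succeq 0$, and, after removing the associated Gaussian contribution, a Haviland-type representation together with the classical L\'evy--Khinchin structure yields the jump measure $\nu_{y_0}$ with the stated moment and integrability conditions, exactly as in the proof of \Cref{thm:PosGeneratorsConstCoeffs} in \cite{didio24posPresConst}. (Equivalently, one can first show via the resolvent criterion \Cref{thm:fdCcara} and \Cref{cor:expLimAd} that the ``nonnegative on zero sets'' property of $A_{y_0}$ already forces $e^{tA_{y_0}}\in\fD_+$ for all $t\geq 0$, and then invoke \Cref{thm:PosGeneratorsConstCoeffs}.) In either route the two transfer steps above dispose of the variable coefficients, and the genuine difficulty — passing from the first-order ($t\to 0^+$) information back to the full generator — is confined to, and resolved by, the constant-coefficient machinery; that is the step I expect to be the main obstacle.
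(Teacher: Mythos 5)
Your ``(ii) $\Rightarrow$ (i)'' is exactly the paper's: freeze at $y$, apply \Cref{thm:PosGeneratorsConstCoeffs} to see that each $A_y$ generates a positivity-preserving semi-group, and conclude by \Cref{prop:genK}.

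In ``(i) $\Rightarrow$ (ii)'' there is a genuine gap. You differentiate $t\mapsto (e^{tA}p)(y)$ at $t=0^+$ to get the positive minimum principle $(Ap)(y)\geq 0$ whenever $p\geq 0$ and $p(y)=0$, and you transfer it correctly by translation to the frozen operator $A_{y_0}$. But the crucial implication --- that this first-order condition on the constant-coefficient operator $A_{y_0}$ forces $e^{tA_{y_0}}\in\fD_+$ for all $t\geq 0$, and hence the L\'evy--Khinchin form --- is asserted, not proved. \Cref{thm:PosGeneratorsConstCoeffs} takes ``$e^{tB}$ is a positivity preserver for all $t\geq 0$'' as its hypothesis, not the positive minimum principle, so it cannot simply be ``invoked''; and the parenthetical resolvent route via \Cref{thm:fdCcara} would require verifying $(\one-\lambda (A_{y_0})_d)^{-1}\cC_d\subseteq\cC_d$, which does not obviously follow from your infinitesimal condition. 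What you have is cross-positivity of $(A_{y_0})_d$ with respect to $\cC_d:=\pos(\rset^n)_{\leq d}$ tested only against the point evaluations $\phi_y$, $y\in\rset^n$; the dual cone $\cC_d^*$ also has extreme rays ``at infinity'' (e.g.\ $a+bx+cx^2\mapsto c$ for $n=1$, $d=2$), and passing from cross-positivity of a matrix to exponential nonnegativity needs the whole dual cone. Whether those missing extreme rays turn out to be vacuous for constant-coefficient differential operators is a nontrivial side question you would have to settle; as written the argument does not close.

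The paper avoids this by never differentiating. From (i), $(T_{t,y}f)(y)=(e^{tA}f)(y)\geq 0$ for $f\in\pos(\rset^n)$, where $T_{t,y}$ is the $y$-frozen version of the full preserver $e^{tA}$ --- note $T_{t,y}\neq e^{tA_y}$, since freezing and exponentiation do not commute for polynomial coefficients. As $T_{t,y}$ has constant coefficients it commutes with translations, so positivity at the single point $y$ upgrades to $T_{t,y}\in\fD_+$. Writing $e^{sA}=\one+sA+s^2R(s)$ with a remainder uniformly bounded on each $\rset[x_1,\dots,x_n]_{\leq d}$ (\Cref{thm:DregularFrechetLieGroup}) and using \Cref{cor:expLimAd}, one obtains the Trotter-type limit $e^{tA_y}=\lim_{k\to\infty}T_{t/k,y}^k$; each factor is a positivity preserver and the set of positivity preservers is closed (\Cref{lem:closedPosPres}), so $e^{tA_y}\in\fD_+$, and \Cref{thm:PosGeneratorsConstCoeffs} now applies with its stated hypothesis. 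Your translation trick is the right idea, but it must be applied to the semi-group $T_{t,y}$ rather than to the generator.
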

\begin{proof}
By \Cref{thm:PosGeneratorsConstCoeffs} we have that (i) is equivalent to the fact that
\[A_y = \sum_{\alpha\in\nset_0^n} \frac{a_\alpha(y)}{\alpha!}\cdot\partial^\alpha\]
is a generator of a positivity preservering semi-group with constant coefficients for all $y\in\rset^n$.

(ii) $\Rightarrow$ (i):
That is \Cref{prop:genK}.

(i) $\Rightarrow$ (ii):
Let $y\in\rset^n$.
By (i) we have $A\in\fd_+$, i.e., $T_t := e^{tA}$ is a positivity preserver for all $t\geq 0$.
Hence,
\begin{equation}\label{eq:yPos}
(T_{t,y} f)(y) = (T_t f)(y)\geq 0
\end{equation}
for $t\geq 0$ and $f\in\pos(\rset^n)$, i.e., the linear map $T_{t,y}$ with constant coefficients preserves positivity of $f\in\pos(\rset^n)$ at $x = y$.

Since $T_{t,y}$ is a linear operator with constant coefficients it commutes with $\partial_i$ for $i=1,\dots,n$ and hence with $e^{z\cdot\nabla}$ which is the shift $(e^{z\cdot\nabla}f)(y) = f(y+z)$ for any $z\in\rset^n$.
Hence, for $x\in\rset^n$ we have
\[(T_{t,y}f)(x) = (T_{t,y}f)(y+x-y) = (T_{t,y} e^{(x-y)\cdot\nabla} f)(y) = e^{(x-y)\cdot\nabla} (T_{t,y} f)(y) \geq 0,\]
by (\ref{eq:yPos}), i.e., $T_{t,y}$ is a positivity preserver.

By \Cref{thm:DregularFrechetLieGroup} we have
\[T_t = e^{tA} = \sum_{k\in\nset_0} \frac{t^k A^k}{k!} = \one + tA + t^2 R(t)\]
with some error term $R(t)$ which is uniformly bounded for all $t\in [0,1]$ on $\rset[x_1,\dots,x_n]_{\leq d}$ for all fixed $d\in\nset_0$. 
Hence, 
\[e^{tA_y} \overset{\text{Cor.\ \ref{cor:expLimAd}}}{=} \lim_{k\to\infty} \left(\one + \frac{t A_y}{k} \right)^k = \lim_{k\to\infty} \left(\one + \frac{t A_y + t^2 R_y(t)}{k} \right)^k = \lim_{k\to\infty} T_{t/k,y}^k.\]
Since $T_{t/k,y}$ is a positivity preserver so is $T_{t/k,y}^k$ and therefore by \Cref{lem:closedPosPres} (ii) we have that $e^{tA_y}$ is also a positivity preserver for $t\geq 0$.
In summary, $A_y$ is a generator of a positivity preservering semi-group with constant coefficients.
Since $y\in\rset^n$ was arbitrary we have proved (ii) by \Cref{thm:PosGeneratorsConstCoeffs}.
\end{proof}

For the Hilbert space case $L^2(\rset^n)$ it is shown in \cite{miyaji86} that the generators of finite orders of positive semi-groups are of order at most $2$ and are (degenerate) elliptic operators.
We see from \Cref{thm:PosGeneratorsGeneral} that this holds also for the generators of $\rset^n$-positivity preservering semi-groups, i.e., for $k\in\nset_0$ and
\[\sum_{\alpha\in\nset_0^n: |\alpha|\leq k} \frac{q_\alpha}{\alpha!} \cdot\partial^\alpha \quad\in\fd_+\]
we have that $k\leq 2$, $q_{e_i+e_j}\in\rset[x_1,\dots,x_n]_{\leq 2}$ for all $i,j=1,\dots,n$, and
\[\big( q_{e_i+e_j}(y) )_{i,j=1}^n \succeq 0\]
hold for all $y\in\rset^n$.
So for $n=1$ we have e.g.\ that
\[a_0\; +\; (b_0 + b_1x)\cdot\partial_x\; +\; \left(c_0^2 + (c_1+c_2 x)^2\right)\cdot\partial^2_x\]
with $a_0,b_0,b_1,c_0,c_1,c_2\in\rset$ are the only generators of positivity preservers on $\rset$ of finite degree.

The infinite tail from the (L\'evy) measures $\nu_y$ is specific to the polynomials.

In \Cref{thm:PosGeneratorsGeneral} we have shown that if $A$ is a generator of a $\rset^n$-positivity preserving semi-group with possibly non-constant coefficients then for every $y\in\rset^n$ the operator $A_y$ is a generator of a $\rset^n$-positivity preserving semi-group with constant coefficients.
On $[0,\infty)$ this is no longer true.

\begin{rem}\label{rem:shift}
Let $A = a x\partial_x$.
Then $A^k x^m = (am)^k x^m$ and hence
\[e^{tA} x^m = \sum_{k\in\nset_0} \frac{t^k A^k}{k!}x^m = \sum_{k\in\nset_0} \frac{(atm)^k}{k!} x^m = e^{atm} x^m = (e^{at} x)^m,\]
i.e., $(e^{tA}f)(x) = f(e^{at} x)$ is the scaling which is a $[0,\infty)$-positivity preserver for all $a\in\rset$.
But for $a=-1$ and $y=1$ we have $A_1 = -\partial_x$ which is not a generator of a $[0,\infty)$-positivity preserving semi-group with constant coefficients.
That is, we do not get a description of all generators because the operators $ax\partial_x$ with $a<0$ are not covered.
The reason for the failure is that in the proof of \Cref{thm:PosGeneratorsGeneral} the positivity at arbitrary $x\in\rset^n$ was derived from the positivity at $x = y$ by applying the shift $e^{c\partial_x}$ which commutes with $e^{A_y}$.
On $[0,\infty)$ this reasoning fails because $[0,\infty)$ is not translation invariant.
\exmsymbol
\end{rem}

\begin{rem}
Another example where ``(ii) $\Rightarrow$ (i)'' in \Cref{prop:genK} fails is when $K$ is the unit ball of $\rset^n$.
Then by \Cref{cor:compactK} the only positivity preservers with constant coefficients are $T = c\one\geq 0$.
Hence, rotations are not covered.
\exmsymbol
\end{rem}

\section{Eventually Positive Semi-Groups}
\label{sec:eventually}

Since we characterized the generators of positive semi-groups for $K=\rset^n$ we want to look at some operators $A$ which seem to generate positive semi-groups.

Semi-groups $(e^{tA})_{t\geq 0}$ such that $e^{tA}$ is positive for all $t\geq \tau$ for some $\tau>0$ but not for $t\in (0,\tau)$ are called (uniformly) \emph{eventually positive} (or non-negative) semi-groups \cite{daners18,glueck23}.

\begin{prop}\label{thm:sigma}
Let
\[T_t := e^{t\cdot (x\partial_x)^3}: \rset[x]_{\leq 4}\to\rset[x]_{\leq 4},\]
i.e.,
\[a_0 + a_1x + a_2 x^2 + a_3x^3 + a_4 x^4 \;\mapsto\; a_0 + a_1 e^t x + a_2 e^{8t} x^2 + a_3 e^{27t} x^3 + a_4 e^{64t} x^4.\]
Then there exists a constant $\tau\in\rset$ with
\[1.19688\cdot 10^{-2} ~~ <~~ \tau ~~<~~ 1.19689\cdot 10^{-2}\]
such that
\begin{enumerate}[(i)]
\item $T_t$ is not a positivity preserver for any $t \in (0,\tau)$
\end{enumerate}
and
\begin{enumerate}[(i)]\setcounter{enumi}{1}
\item $T_t$ is a positivity preserver for $t = 0$ and all $t\geq \tau$,
\end{enumerate}
i.e.,
\[\left(e^{t\cdot (x\partial_x)^3}\right)_{t\geq 0} \quad\text{on}\quad \rset[x]_{\leq 4}\]
is an \emph{eventually positive} semi-group.
\end{prop}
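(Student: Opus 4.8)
The key observation is that $T_t$ acts diagonally on the monomial basis: writing $p(x) = \sum_{k=0}^4 a_k x^k$, we have $(T_t p)(x) = \sum_{k=0}^4 a_k e^{k^3 t} x^k$. So the problem reduces to a purely analytic question about the $5$-dimensional cone $\pos(\rset)\cap\rset[x]_{\leq 4}$ and the diagonal scaling $D_t := \diag(1, e^t, e^{8t}, e^{27t}, e^{64t})$. The plan is to work with the characterization of non-negative degree-$4$ univariate polynomials and track what $D_t$ does to it. First I would record the standard fact that a polynomial $p\in\rset[x]_{\leq 4}$ with $a_4\geq 0$ is non-negative on $\rset$ if and only if it is a sum of two squares of polynomials of degree $\leq 2$; equivalently, there is an explicit semialgebraic description of the cone in terms of the coefficients (e.g.\ via the non-negativity of an associated $3\times 3$ Hankel-type matrix, or by the classical discriminant conditions). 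Since $T_t$ always fixes $a_0$ and multiplies $a_4$ by a positive number, $T_t$ maps the "leading-coefficient-zero" boundary pieces into themselves, so it suffices to analyze $p$ with $a_4 > 0$, and after rescaling $x$ and $p$ we may normalize $a_4 = 1$ and $a_0 = 1$ (the cases $a_0 = 0$ or $a_4=0$ being limits handled by closedness, \Cref{lem:closedPosPres}).

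Next I would reduce to checking a one-parameter family of "worst-case" polynomials. Because $T_t$ and $T_{t'}$ commute and $T_{t}T_{t'} = T_{t+t'}$, the set $\{t \geq 0 : T_t \text{ is a positivity preserver}\}$ is closed under addition; together with closedness in $t$ (again \Cref{lem:closedPosPres} plus continuity of $\exp$ from \Cref{thm:DregularFrechetLieGroup}) this set is of the form $\{0\}\cup[\tau,\infty)$ for some $\tau\in[0,\infty]$, so it only remains to pin down $\tau$ and show $0 < \tau < \infty$. To see $\tau < \infty$: for large $t$ the map $D_t$ is dominated by the $x^4$ entry, and one checks that once $e^{64t}$ is large enough relative to $e^{27t}, e^{8t}, e^t$, the image $T_t p$ of any non-negative $p$ is again non-negative (the leading term swamps the rest on $|x|\geq R$, and near the origin $p(0)=a_0\geq 0$ is preserved while the intermediate coefficients are controlled); this can be made quantitative. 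To see $\tau > 0$: exhibit a single $p\in\pos(\rset)$, e.g.\ a perturbation of $(x^2 + bx + c)^2$ that sits on the boundary of the cone (a polynomial with a real double root and $p\geq 0$), and compute that $\frac{d}{dt}\big|_{t=0}(T_t p)(x_0) < 0$ at the double root $x_0$, so $T_t p \notin \pos(\rset)$ for small $t>0$; this is a short explicit derivative computation using that at a double root $p(x_0) = p'(x_0) = 0$.

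For the numerical value of $\tau$, the plan is to set up the exact optimization: $\tau$ is the infimum over $t$ such that $D_t$ preserves the cone, which by an LMI/boundary argument equals the value of $t$ at which a certain family of boundary polynomials first re-enters the cone. Parametrize boundary polynomials of $\pos(\rset)\cap\rset[x]_{\leq 4}$ with $a_0=a_4=1$ (these form a low-dimensional semialgebraic set — essentially squares $(x^2+bx+c)^2$ and the two-square boundary cases), apply $T_t$, and impose that the discriminant-type non-negativity condition is tight. This yields $\tau$ as the root of an explicit (though messy) one-variable equation obtained by eliminating the root location and $b$; I would then verify the stated enclosure $1.19688\cdot 10^{-2} < \tau < 1.19689\cdot 10^{-2}$ by interval arithmetic / a sign change of that equation at the two endpoints. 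The main obstacle is this last step: organizing the boundary analysis so that the optimization genuinely reduces to a single scalar equation, and in particular proving that the infimum is attained on the squares-of-quadratics part of the boundary rather than somewhere in the interior of a higher-dimensional face — i.e.\ a compactness-plus-first-order-conditions argument showing the critical configuration is a polynomial with a real double root. Once that is in place, everything else is bookkeeping: monotonicity in $t$ from the semigroup property, the two limiting faces $a_0=0$ and $a_4=0$, and the numerical certification.
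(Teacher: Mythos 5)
Your reduction to the diagonal scaling $D_t=\diag(1,e^t,e^{8t},e^{27t},e^{64t})$ is the right starting point, but you then go down a much harder road than the paper and in doing so you leave a genuine gap. The paper's proof rests on the fact that a diagonal map $x^k\mapsto\lambda_k x^k$ on $\rset[x]_{\leq 2d}$ preserves $\pos(\rset)_{\leq 2d}$ if and only if the Hankel matrix $\cH_d(\lambda)=(\lambda_{j+l})_{j,l=0}^d$ is positive semidefinite. (Necessity: apply $D_\lambda$ to $p=(\sum_i c_ix^i)^2$ and evaluate at $x=1$ to get $c^T\cH_d(\lambda)c\geq 0$; sufficiency: write $\lambda_{i+j}=\langle v_i,v_j\rangle$ for a Gram decomposition, then $D_\lambda f^2 = \|\sum_i c_i x^i v_i\|^2 \geq 0$.) With $\lambda_k=e^{tk^3}$ and $d=2$, the whole problem collapses to the single scalar inequality $h_2(t):=\det\cH_2(\lambda(t))=e^{72t}-e^{66t}-e^{54t}+2e^{36t}-e^{24t}\geq 0$ (the $1\times1$ and $2\times2$ leading minors are trivially nonnegative for $t\geq 0$), and the proposition follows by Taylor expanding $h_2$ at $0$ (where $h_2(0)=h_2'(0)=0$ and $h_2''(0)=-72<0$) together with locating its unique positive zero numerically. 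Your SOS/boundary-of-the-cone plan never invokes this criterion, and without it the problem does not reduce to a single explicit transcendental equation; the ``eliminate the root location and $b$'' step you sketch is substantially more than bookkeeping.

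There is also a concrete error in your structural argument. You claim that since $\{t\geq 0:T_t\ \text{is a positivity preserver}\}$ is closed and closed under addition, it must have the form $\{0\}\cup[\tau,\infty)$. This is false in general: for instance $\{0\}\cup\tau\nset_0$ or $\{0\}\cup\{\tau\}\cup[3\tau,\infty)$ are closed, contain $0$, and are closed under addition but are not of that form. The additive semigroup property only shows that once the set contains a nondegenerate interval $[\tau,\tau']$, it eventually contains a ray $[k_0\tau,\infty)$; it does not rule out gaps below that ray, nor does it produce the interval in the first place. The correct way to get the $\{0\}\cup[\tau,\infty)$ conclusion, and the way the paper gets it, is to analyze the explicit function $h_2$ directly and establish that it has a single sign change on $(0,\infty)$. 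So the semigroup shortcut cannot replace the Hankel-determinant computation; you would still need to identify and control the scalar obstruction function, at which point the diagonal Hankel criterion is by far the cleanest route.
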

\begin{proof}
We have
$\lambda_k(t) = e^{t\cdot k^3}$
for all $k=0,\dots,4$.
Hence, we have
\[\cH(\lambda(t))_0 = (1)\succeq 0\]
since $\det\cH(\lambda(t))_0 = 1 \geq 0$ and
\[\cH(\lambda(t))_1 = \begin{pmatrix} 1 & e^t\\ e^t & e^{8t} \end{pmatrix}\succeq 0\]
since $\det\cH(\lambda(t))_1 = e^{8t} - e^{2t} = e^{2t}\cdot (e^{6t} - 1) \geq 0$ for all $t\geq 0$ with $\det\cH(\lambda(t))_1>0$ for $t>0$.
It is therefore sufficient to look at $\det\cH(\lambda(t))_2$ which is
\[h_2(t) := \det\cH(\lambda(t))_2 = e^{72t} - e^{66t} - e^{54t} + 2e^{36t} - e^{24t}.\]
From $h_2(0) = 0$, $h_2'(0) = 0$, and $h_2''(0) = -72$ we get $h(t) < 0$ for $t\in (0,\varepsilon)$ for some $\varepsilon > 0$.
By calculations from Mathematica \cite{mathematica} we get
\[\sigma_3(0.0119688) \approx -3.39928\cdot 10^{-8} \quad\text{and}\quad \sigma_3(0.0119689) \approx 1.7888\cdot 10^{-8},\]
i.e., $\cH(\lambda(t))_2$ becomes positive semi-definite between $t = 0.0119688$ and $t = 0.0119689$.
The smallest eigenvalues $\sigma_3(t)$ of $\cH(\lambda(t))_2$ are depicted in \Cref{fig:sigma} for $t\in [0,0.015]$, $t\in [0,1]$, and $t\in [0,10]$.
We have $h_2(t)<0$ for $t\in (0,\tau)$ with $\tau\in [0.0119688,0.0119689]$ which proves (i).
Since we know $\sigma_3(t)$ for $t\in [0,1]$, see \Cref{fig:sigma} (b), and since $h_3'(t) > 0$ holds for all $t\geq 1$ we have (ii).
\end{proof}

\begin{figure}[htb!]
\begin{subfigure}{.33\textwidth}
\includegraphics[width=\linewidth]{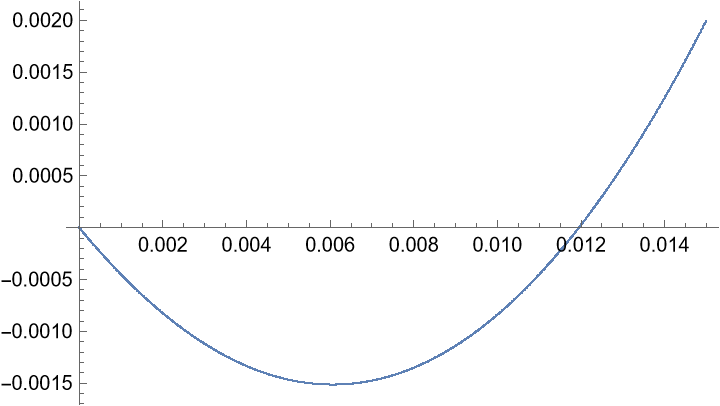}
\caption{$t\in [0,0.015]$}
\end{subfigure}
\begin{subfigure}{.325\textwidth}
\includegraphics[width=\linewidth]{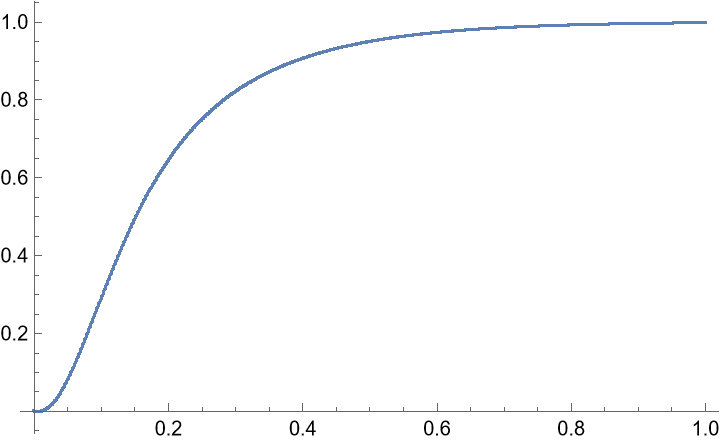}
\caption{$t\in [0,1]$}
\end{subfigure}
%
\begin{subfigure}{.33\textwidth}
\includegraphics[width=\linewidth]{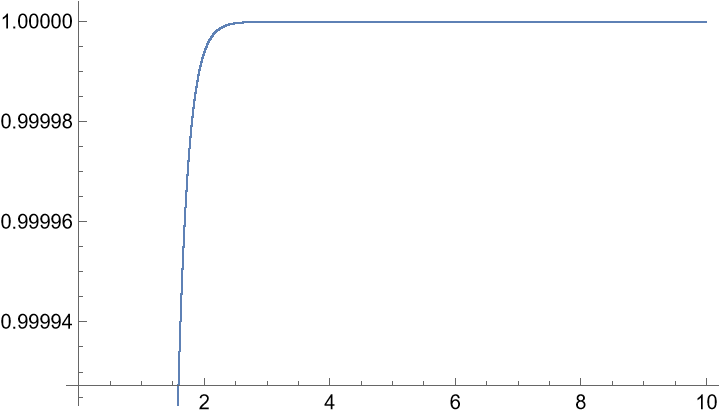}
\caption{$t\in [0,10]$}
\end{subfigure}
%
\caption{The smallest eigenvalue $\sigma_3(t)$ from \Cref{thm:sigma} for $t\in [0,10]$.\label{fig:sigma}}
\end{figure}

\begin{rem}
Note, that $T_t = e^{t (x\partial_x)^3}$ on $\rset[x]$, not just on $\rset[x]_{\leq 4}$, fulfills the following.
For any $p\in\pos(\rset)$ there exists a $\tau_p>0$ such that $T_tp \in\pos(\rset)$ holds for all $t\geq \tau_p$.
\exmsymbol
\end{rem}

For eventually positive semi-groups see e.g.\ \cite{daners18,denkploss21,glueck23}.
Our example $A = (x\partial_x)^3$ on $\rset[x]_{\leq 4}$ in \Cref{thm:sigma} acts only on the $5$-dimensional (Hilbert) space $\rset[x]_{\leq 4}$ and is therefore a matrix example of an eventually positive semi-group, see \cite{daners16b} and literature therein for the matrix cases.
Note, that in the matrix case the invariant cone is (usually) $[0,\infty)^n$.
In our example it is $\pos(\rset)_{\leq 4}$.
So there are slight differences.

Eventually positive semi-groups are an active field with the need for small examples which can be calculated and investigated \cite{glueck23}.
Therefore, we provide the smallest possible case of an eventually positive semi-group in our framework: A degree two operator on $\rset[x]_{\leq 2}$.

\begin{prop}\label{thm:1}
Let $a\in\rset$ be real with $|a|>\frac{1}{\sqrt{5}}$ and let
\[A = a\cdot\partial + \frac{1}{2}(x^2-1)\cdot\partial^2\]
be an operator on the $3$-dimensional Hilbert space $\cH = \rset[x]_{\leq 2}$.
Then
\[\left(e^{tA} \right)_{t\geq 0}\]
is an eventually positive semi-group on $\cH$, i.e., there exists a constant $\tau_a >0$ such that
\begin{enumerate}[(i)]
\item $e^{tA}\pos(\rset)_{\leq 2}\not\subseteq\pos(\rset)_{\leq 2}$ holds for all $t\in (0,\tau_a)$
\end{enumerate}
and
\begin{enumerate}[(i)]\setcounter{enumi}{1}
\item $e^{tA}\pos(\rset)_{\leq 2}\subseteq\pos(\rset)_{\leq 2}$ holds for all $t = 0$ and $t\geq \tau_a$.
\end{enumerate}
For $a\in\rset$ with $|a|\leq \frac{1}{\sqrt{5}}$ we have $e^{tA}\pos(\rset)_{\leq 2}\not\subseteq\pos(\rset)_{\leq 2}$ for all $t\in (0,\infty)$.
\end{prop}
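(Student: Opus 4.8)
The plan is to pass to the $3$-dimensional matrix picture on $\cH=\rset[x]_{\leq 2}$. In the basis $1,x,x^2$ one has $A1=0$, $Ax=a$, $Ax^2=x^2+2ax-1$, so $A|_\cH$ is upper triangular with eigenvalues $0,0,1$; moreover $A^3=A^2$ on $\cH$, which gives the closed form $e^{tA}=\id+tA+(e^{t}-1-t)A^2$ and hence the explicit formulas $e^{tA}1=1$, $e^{tA}x=x+ta$, and $e^{tA}x^2=e^{t}x^2+2a(e^{t}-1)x+\big((2a^2-1)(e^{t}-1)-2a^2t\big)$. (Equivalently, by \Cref{cor:Ad} this $e^{tA}$ is the restriction of the $\fD$-exponential to $\cH$.)

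Next I would reduce the cone condition to finitely many parameters. The cone $\pos(\rset)_{\leq 2}$ consists precisely of the nonnegative quadratics, hence is the closed convex cone generated by the squares $(x_0+x_1x)^2$, $(x_0,x_1)\in\rset^2$ (equivalently, the cone of positive semidefinite $2\times 2$ Hankel matrices). Since a linear map sends a closed convex cone into itself exactly when it sends the generators into the cone, $e^{tA}$ is a positivity preserver on $\cH$ if and only if $e^{tA}\big[(x_0+x_1x)^2\big]\geq 0$ on $\rset$ for all $(x_0,x_1)\in\rset^2$. Writing the image as $c_0'+c_1'x+c_2'x^2$ and using $c_2'=e^{t}x_1^2\geq 0$, nonnegativity becomes $4c_0'c_2'-(c_1')^2\geq 0$, and by the formulas above this expression equals $4x_1^2$ times a quadratic form $Q_{a,t}(x_0,x_1)$ whose leading coefficient is $e^{t}-1$ and whose coefficients are elementary in $a,t$. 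Thus, for $t>0$, $e^{tA}$ preserves $\pos(\rset)_{\leq 2}$ iff $Q_{a,t}\succeq 0$, and since $e^{t}-1>0$ this is equivalent to $\det Q_{a,t}\geq 0$. After removing a positive factor, the determinant condition simplifies to a single transcendental inequality $h_a(t)\geq 0$, where $h_a$ is a combination of $e^{t}$, $e^{-t}$, $1$ and $t^2$ with $h_a(0)=0$.

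The remaining work, and the crux, is the qualitative study of $h_a$ on $(0,\infty)$. From the Taylor expansion one reads $h_a(t)=-t^2+O(t^4)$, so $h_a(t)<0$ for small $t>0$, which already yields (i) near the origin. The sign of $h_a$ for large $t$ is governed by its leading exponential term, and this leading term changes sign precisely at $|a|=\frac{1}{\sqrt5}$: for $|a|>\frac{1}{\sqrt5}$ one has $h_a(t)\to+\infty$, whereas for $|a|\leq\frac{1}{\sqrt5}$ one has $h_a(t)<0$ for all $t>0$. To turn ``$h_a<0$ near $0$ and $h_a\to+\infty$'' into a clean single crossing, I would show that $h_a$ has at most one zero in $(0,\infty)$ — for instance by proving that $t\mapsto h_a(t)/t^2$ is strictly increasing, or that $h_a''$ changes sign exactly once so that $h_a'$ (with $h_a'(0)=0$) has a single positive zero. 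This yields a unique $\tau_a>0$ with $h_a<0$ on $(0,\tau_a)$ and $h_a\geq 0$ on $[\tau_a,\infty)$, hence (i) and (ii); and for $|a|\leq\frac1{\sqrt5}$ the operator $e^{tA}$ is a positivity preserver for no $t>0$. Together with the trivial case $t=0$ this proves the proposition.

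Step~1 and the reduction in Step~2–3 are essentially bookkeeping. The main obstacle is the last paragraph: isolating the asymptotic threshold $|a|=1/\sqrt5$, and above all establishing the ``at most one zero'' (convexity/monotonicity) statement for $h_a$, where crude estimates such as $\cosh t-1\geq t^2/2$ are not by themselves decisive near $t=0$.
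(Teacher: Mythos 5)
Your route is essentially dual to the paper's: you push forward the extreme rays $(x_0+x_1x)^2$ of $\pos(\rset)_{\leq 2}$ and ask when the images remain nonnegative, which produces a quadratic form $Q_{a,t}$ in $(x_0,x_1)$; the paper applies the Hankel criterion from \Cref{thm:posPresChara} (via \cite[Thm.\ 3.1]{borcea11}) and requires $b_2(x)-a^2t^2\geq 0$ for all $x\in\rset$. These are the same condition in dual variables---in fact one computes $\det Q_{a,t}=e^t(e^t-1)\cdot\min_{x\in\rset}\bigl(b_2(x)-a^2t^2\bigr)$---and your closed form $e^{tA}=\id+tA+(e^t-1-t)A^2$ is precisely the paper's matrix $\tilde B$. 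So the reduction is sound and of comparable effort to the paper's.

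The gap you flag, uniqueness of the sign change, is also the step the paper does not close rigorously (it presents an explicit $m(a,t)$ and supports single crossing with Mathematica and Figure~2). Your second suggested strategy does in fact work and should be carried out: after dividing $\det Q_{a,t}$ by $e^{2t}$ one gets $h_a(t)=2(a^2-1)(\cosh t-1)-a^2t^2$, and then $h_a(0)=h_a'(0)=0$, $h_a''(0)=-2<0$, and $h_a''(t)=2(a^2-1)\cosh t-2a^2$ is strictly increasing; hence $h_a''$ has exactly one positive zero, hence so does $h_a'$, hence so does $h_a$ when $a^2>1$, while $h_a<0$ on $(0,\infty)$ when $a^2\leq 1$. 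That is the whole convexity argument, and it needs to be written out rather than left as a plan.

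One point you must not take on faith is the threshold. Carrying the algebra through gives $(e^t-1)\cdot\min_{x}\bigl(b_2(x)-a^2t^2\bigr)=(a^2-1)(e^t-1)^2-a^2t^2e^t$, whose $e^{2t}$-coefficient is $a^2-1$; so the sign change in the large-$t$ asymptotics happens at $|a|=1$, not at $|a|=1/\sqrt5$. This conflicts both with the statement and with the explicit minimum formula displayed in the paper's proof, so you should rederive the discriminant yourself and resolve the discrepancy before invoking the asymptotics. Also note that $\det Q_{a,t}=-t^2+O(t^3)$ with a nonzero cubic term; only the normalized $h_a$ above satisfies $-t^2+O(t^4)$.
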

\begin{proof}
Let $\{1,x,x^2\}$ be the monomial basis of $\cH$.
From
\[A1 = 0,\quad Ax = a,\quad \text{and}\quad Ax^2 = -1 + 2ax + x^2\]
we get the matrix representation
\[\tilde{A} = \begin{pmatrix} 0 & a & -1\\ 0 & 0 & 2a\\ 0 & 0 & 1 \end{pmatrix}\]
of $A$ on $\cH$ in the monomial basis.
Therefore, we get
\[\tilde{B} = \exp(t\tilde{A}) = \begin{pmatrix}
1 & at & (2a^2-1)\cdot (e^t - 1) - 2a^2 t\\
0 & 1 & 2a\cdot (e^t - 1)\\
0 & 0 & e^t
\end{pmatrix} = \begin{pmatrix}
1 & at & f(a,t)\\
0 & 1 & g(a,t)\\
0 & 0 & e^t
\end{pmatrix}\]
for all $t\in\rset$, e.g.\ by using \cite{mathematica}.
The matrix representation $\tilde{B}$ belongs to
\[B = b_0 + b_1\cdot\partial + \frac{b_2}{2}\cdot\partial^2\]
with
\begin{align*}
B1 = 1 = b_0 \quad &\Rightarrow\quad b_0 = 1,\\
Bx = at + x = b_0 x + b_1 \quad &\Rightarrow\quad b_1 = at
\end{align*}
and
\[Bx^2 = f(a,t) + g(a,t)\cdot x + e^t\cdot x^2 = b_0 x^2 + 2b_1 x + b_2\]
gives
\begin{align*}
b_2 &= f(a,t) + g(a,t)\cdot x + e^t\cdot x^2 - b_0 x^2 - 2b_1 x\\
&= (2a^2 - 1)\cdot (e^t-1) - 2a^2 t + 2a\cdot (e^t-1 - t)\cdot x + (e^t-1)\cdot x^2.
\end{align*}
By \cite[Thm.\ 3.1]{borcea11} we have that
\begin{equation}
B\pos(\rset)_{\leq 2}\subseteq\pos(\rset)_{\leq 2}
\end{equation}
is equivalent to
\begin{equation}\label{eq:3}
\begin{pmatrix}
b_0(x) & b_1(x)\\ b_1(x) & b_2(x)
\end{pmatrix}\succeq 0\ \text{for all}\ x\in\rset.
\end{equation}
Since $b_0 = 1>0$ we have that (\ref{eq:3}) is equivalent to
\begin{equation}\label{eq:4}
h(x) := \det\begin{pmatrix}
b_0(x) & b_1(x)\\ b_1(x) & b_2(x)
\end{pmatrix} = b_0(x)\cdot b_2(x) - b_1(x)^2 \geq 0\ \text{for all}\ x\in\rset
\end{equation}
with
\[h(x) = (2a^2 -1)\cdot (e^t - 1) - 2a^2 t - a^2 t^2 + 2a\cdot (e^t-1-t)\cdot x + (e^t-1)\cdot x^2.\]
For $t>0$ we have that $h$ is a quadratic polynomial in $x$ and hence we have that (\ref{eq:4}) is equivalent to
\begin{equation}
m := \min_{x\in\rset} h(x) \geq 0
\end{equation}
with
\begin{equation}\label{eq:min}
m = 1-e^t + a^2\cdot\frac{5 + 8t + 4t^2 - (10+8t+t^2)\cdot e^t + 5\cdot e^{2t}}{e^t-1}.
\end{equation}
Hence, we have that for each $a\in\rset$ with $|a|> \frac{1}{\sqrt{5}}$ that there exists a $\tau_a>0$ such that $m(a,t)<0$ for all $t\in (0,\tau_a)$ and $m(a,t) > 0$ for all $t\in (\tau_a,\infty)$ hold, i.e., (i) and (ii) are proved.

Let $a\in\rset$ be with $|a|<\frac{1}{\sqrt{5}}$.
We see that $m(a,t)<0$ for all $t>0$ and for the boundary case $a=\pm\frac{1}{\sqrt{5}}$ we have
\[m(\pm 5^{-1/2},t) = \frac{4 e^{-t}\cdot t\cdot (t+2) - t\cdot (t+8)}{5 - 5e^{-t}} < 0\]
also for all $t>0$ which proves $e^{tA}\pos(\rset)_{\leq 2}\not\subseteq\pos(\rset)_{\leq 2}$ for all $t>0$.
\end{proof}

It is clear from (\ref{eq:min}) that we have $\tau_a = \tau_{-a}$ for all $a\in\rset$ with $|a|>\frac{1}{\sqrt{5}}$.
In \Cref{fig:2} we give the minima $m(a,t)$ from (\ref{eq:min}) for two values $a$ close to $\frac{1}{\sqrt{5}}$.
\begin{figure}[htb!]
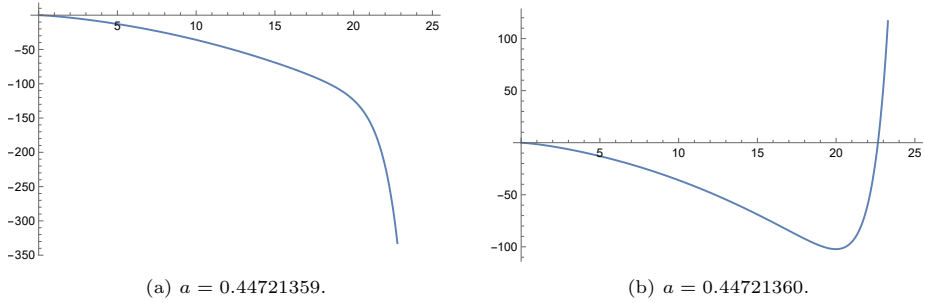

\begin{subfigure}{.5\textwidth}
\includegraphics[width=0.95\linewidth]{min1.png}
\caption{$a=0.44721359$.}
\end{subfigure}\quad
\begin{subfigure}{.5\textwidth}
\includegraphics[width=0.95\linewidth]{min2.png}
\caption{$a=0.44721360$.\label{fig:1b}}
\end{subfigure}
\caption{The minima $m(a,t)$ from (\ref{eq:min}) for two values $a$ close to $\frac{1}{\sqrt{5}} \approx 0.4472135955$ in the range $t\in [0,25]$.
Both figures were generated by \cite{mathematica}.\label{fig:2}}
\end{figure}
For $a = 0.44721359 < \frac{1}{\sqrt{5}}$ the minima $m(a,t)$ is always negative, i.e., $(e^{tA})_{t\geq 0}$ is not eventually positive.
For $a = 0.44721360 > \frac{1}{\sqrt{5}}$ the minima $m(a,t)$ changes sign and is in particular $>0$ for $t\geq \tau_a \approx 22.66$, i.e., $(e^{tA})_{t\geq 0}$ is eventually positive.

\begin{exms}\label{exm:1}
In the following we give for several $a\in\rset$ with $|a| > \frac{1}{\sqrt{5}}$ the approximate values of $\tau_{\pm a}$:
\begin{enumerate}[\bfseries\; (a)]
\item $a=0.44721360$ (\Cref{fig:1b}):
\[22.655 \quad < \quad \tau_{\pm a} \quad < \quad 22.656\]

\item $a=0.45$:
\[7.5504 \quad < \quad \tau_{\pm a} \quad < \quad 7.5505\]

\item $a=1$:
\[1.1675 \quad < \quad \tau_{\pm a} \quad < \quad 1.1676\]

\item $a=10$:
\[9.7541\cdot 10^{-2} \quad < \quad \tau_{\pm a} \quad < \quad 9.7542\cdot 10^{-2}\]

\item $a=100$:
\[9.6219\cdot 10^{-3} \quad < \quad \tau_{\pm a} \quad < \quad 9.6220\cdot 10^{-3} \tag*{$\circ$}\]
\end{enumerate}
\end{exms}

\section{Summary}
\label{sec:summary}

In \Cref{sec:Dfrechet} we showed that $(\fD,\,\cdot\,)$ is a regular Fr\'echet Lie group with Lie algebra $(\fd,\,\cdot\,,+)$.
This gave us a sufficiently analytic background to investigate $K$-positivity preservers and their generators for general closed subsets $K\subseteq\rset^n$.

In \Cref{sec:KposPres} we investigated $K$-positivity preservers.
At first we characterized in \Cref{thm:KposDescription} all $K$-positivity preservers for general closed subsets $K\subseteq\rset^n$.
We then treated $K$-positivity preservers with constant coefficients more deeply.
While a $K^\sharp$-moment sequence always gives a $K$-positivity preserver (\Cref{lem:sKsharpMomSeq}) we have that the reverse only holds for determinate sequences (\Cref{thm:determinateKsharp}) so far.
It is open if this also holds for the non-determinate case.

\begin{open}
In \Cref{lem:sKsharpMomSeq} we showed that
\begin{enumerate}[(i)]
\item $s$ is a $K^\sharp$-moment sequence
\end{enumerate}
implies that
\begin{enumerate}[(i)]\setcounter{enumi}{1}
\item $D(s)$ is a $K$-positivity preserver.
\end{enumerate}
The reverse holds for $K$ being a cone (\Cref{prop:KposPresCone}) and compact $K$ (\Cref{cor:compactK}).
In general it is open whether ``(ii) $\Rightarrow$ (i)'' holds.
Is there a counter example where it does not hold or a proof that is always holds?
\end{open}

In \Cref{cor:compactK} we showed that every $K$-positivity preserver with constant coefficients for compact $K$ is only the scaling $Tf = cf$ for some $c\geq 0$.

In \Cref{sec:generators} we characterized the generators $A$ of $K$-positivity preservers, i.e., all maps $A:\rset[x_1,\dots,x_n]\to\rset[x_1,\dots,x_n]$ such that $e^{tA}$ is a $K$-positivity preserver for all $t\geq 0$.
For that we need the regular Fr\'echet Lie group properties established in \Cref{sec:Dfrechet}.

At first we show in \Cref{prop:1plusAd} that the generators can always be fully characterized via the resolvent.
This characterization is well-known in the semi-group theory on Banach lattices.
However, the characterization through the resolvent gives no specific information for our case of the LF-space $\rset[x_1,\dots,x_n]$ and hence we use other techniques to gain more information about the generators.

In \Cref{thm:PosGeneratorsGeneral} we show that $A$ is a generator of a $\rset^n$-positivity preserving semi-group if and only if $A_y$ is a generator of a $\rset^n$-positivity preserving semi-group with constant coefficients for all $y\in\rset^n$.
The constant coefficient case was fully solved in \cite{didio24posPresConst} and hence we have explicit expressions how the coefficients of $A$ resp.\ $A_y$ look like.

Since for cones $K\subseteq\rset^n$ and the stripes $K = C\times [0,\infty)$ and $C\times\rset^n$ with $C\subseteq\rset^m$ compact we have that $s$ being a $K^\sharp$-moment sequence is necessary and sufficient we use the same technique via infinitely divisible representing measures from \cite{didio24posPresConst} here again for the constant coefficient case.
We get a complete description in \Cref{thm:generators0infty}.

For the non-constant coefficient case we have \Cref{prop:genK}, i.e., if $A_y$ is a generator of a $K$-positivity preserving semi-group for all $y\in K$ then $e^{tA}$ is a $K$-positivity preserver for all $t\geq 0$.
We give examples that the reverse of this statement does not hold in general.
Hence, it is still open how the generator of a $K$-positivity preserving semi-group look like in the non-constant coefficient case.

\begin{open}
How does
\[A:\rset[x_1,\dots,x_n]\to\rset[x_1,\dots,x_n]\]
look like if $e^{tA}$ is a $K$-positivity preserver for all $t\geq 0$ with $K\subsetneq\rset^n$ closed?
\end{open}

In \Cref{thm:sigma} we showed that while $A = (x\partial_x)^3$ is not a generator of a positivity preserver we have that $(e^{tA})_{t\geq 0}$ is an eventually positive semi-group, i.e., $e^{tA}$ is a positivity preserver for all $t\geq \tau$ for some $\tau>0$.
In \Cref{thm:1} we showed the same for $A = (x^2-1)\cdot\partial_x^2 + a\partial_x$ on $\rset[x]_{\leq 2}$.
Moments add therefore an additional tool to investigate eventually positive semi-groups \cite{daners16b,daners18,denkploss21,glueck23}.

\section*{Acknowledgment}

PJdD thanks Jochen Gl\"uck for providing us with the reference \cite{glueck23} and motivating us to look for an example of an eventually positive semi-group on $\rset[x]_{\leq 2}$.
This example is given in \Cref{thm:1}.

\section*{Funding}

The author and this project are supported by the Deutsche Forschungs\-gemein\-schaft DFG with the grant DI-2780/2-1 and his research fellowship at the Zukunfts\-kolleg of the University of Konstanz, funded as part of the Excellence Strategy of the German Federal and State Government.

The present work was performed during the stay of the second author at the University of Konstanz from January to March 2024.
This stay was funded by the Zukunftskolleg of the University of Konstanz.


\providecommand{\bysame}{\leavevmode\hbox to3em{\hrulefill}\thinspace}
\providecommand{\MR}{\relax\ifhmode\unskip\space\fi MR }
\providecommand{\MRhref}[2]{%
  \href{http://www.ams.org/mathscinet-getitem?mr=#1}{#2}
}
\providecommand{\href}[2]{#2}

\end{document}